\title{ D ThÃ©orÃ¨me  D d'ouvert de bifurcations pour des automorphismes de HÃ©non 
 eT intersections d'ensembles de Cantor}
\author{SÃ©bastien Biebler}
\date{Decembre 2013}
\begin{document}
 \newtheorem* {myTheo2} {Theorem}  \newtheorem* {myTheo3} {Theorem}
 \newtheorem {df5} [subsubsection]{Definition}  \newtheorem {nt5} [subsubsection] {Notation} \newtheorem {co5}[subsubsection]{Corollary} \newtheorem* {rk}{Remark}  \newtheorem* {coo}{Corollary} 
\newtheorem{df3}[subsubsection]{Definition}
\newtheorem{pr3}[subsubsection]{Proposition}
\newtheorem{lem3}[subsubsection]{Lemma}

\begin{center} \Huge{Latt\`es maps and the interior of the bifurcation locus}
\end{center} \begin{center}  \huge{S\'ebastien Biebler } \end{center}
\selectlanguage{english}
\begin{abstract}   
We study the phenomenon of robust bifurcations in the space of holomorphic maps of $\mathbb{P}^2(\mathbb{C})$. We prove that any Latt\`es example of sufficiently high degree belongs to the closure of the interior of the bifurcation locus. In particular, every Latt\`es map has an iterate with this property. To show this, we design a method creating robust intersections between the limit set of a particular type of iterated functions system in $\mathbb{C}^2$ with a well-oriented complex curve. Then we show that any Latt\`es map of sufficiently high degree can be perturbed so that the perturbed map exhibits this geometry.

 \end{abstract}
\tableofcontents 
\section{Introduction} \small \subsection{Context} 
In the article \cite{mss}, Ma$\tilde{\text{n}}$\'e, Sad and Sullivan, and independently Lyubich in \cite{lyy}, introduced a relevant notion of stability for holomorphic families $(f_{\lambda})_{\lambda \in \Lambda}$ of rational mappings of degree $d$ on the Riemann sphere $\mathbb{P}^{1}(\mathbb{C})$, parameterized by a complex manifold $\Lambda$. The family $(f_{\lambda})_{\lambda \in \Lambda}$ is $J$-stable in a connected open subset $\Omega \subset \Lambda$ if in $\Omega$ the dynamics is structurally stable on the Julia set $J$. It can be shown that this is equivalent to the fact that periodic repelling points stay repelling points inside the given family. The bifurcation set is the complementary of the locus of stability. A remarkable fact is that the $J$-stability locus is dense in $\Lambda$ for every such family. Moreover, parameters with preperiodic critical points are dense in the bifurcation locus. \medskip

 In higher dimension, less is known. We will only discuss the 2-dimensional case in this paper. The research in this field mostly takes inspiration from two different types of maps with different behaviour : polynomial automorphisms of $\mathbb{C}^{2}$ and holomorphic endomorphisms of $\mathbb{P}^{2}(\mathbb{C})$. Knowledge about bifurcations of polynomial automorphisms is growing quickly. Let us quote the work of Dujardin and Lyubich (\cite{dl}) which introduces a satisfactory notion of stability and shows that homoclinic tangencies, which are the 2-dimensional counterpart of preperiodic critical points, are dense in the bifurcation locus. \medskip
  
From now on, we are interested in the case of holomorphic endomorphisms of $\mathbb{P}^{2}(\mathbb{C})$. The natural generalization of the one-dimensional theory was designed by Berteloot, Bianchi and Dupont in \cite{du9}. Their notion of stability is as follows : let $(f_{\lambda})_{\lambda \in \Lambda}$ be a holomorphic family of holomorphic maps of degree $d$ on $\mathbb{P}^{2}(\mathbb{C})$ where $\Lambda$ is simply connected. Then the following assertions are equivalent: \begin{enumerate} 
               \item  The function on $\Lambda$ defined by the sum of Lyapunov exponents of the equilibrium measure $\mu_{f_{ \lambda}} : \lambda \mapsto  \chi_{1}(\lambda) + \cdot \cdot \cdot + \chi_{k} (\lambda)$ is pluriharmonic on $\Lambda$. \item 
 The sets $(J^{*}(f_{\lambda}))_{\lambda \in \Lambda}$ move holomorphically in a weak sense, where $J^{*}(f_{\lambda})$ is the support of the measure $\mu_{f_{ \lambda}}$. \item There is no (classical) Misiurewicz bifurcation in $\Lambda$.
             \item Repelling periodic points contained in $J^{*}(f_{\lambda})$ move holomorphically over $\Lambda$.   \end{enumerate}
If these conditions are satisfied, we say that  $(f_{\lambda})_{\lambda \in \Lambda}$ is $J^{*}$-stable. If $(f_{\lambda})_{\lambda \in \Lambda}$ is not $J^{*}$-stable at a parameter $\lambda_{0}$, we will say that a bifurcation occurs at $\lambda_{0}$. \medskip 

A major difference with the one-dimensional case is the existence of open sets of bifurcations. Recently, several works have shown the existence of persistent bifurcations near well-chosen maps. By \cite{du9}, to obtain open subsets in the bifurcation locus, it is enough to create a persistent intersection between the postcritical set and a hyperbolic repeller contained in $J^{*}$. Dujardin gives in \cite{dm} two mechanisms leading to such persistent intersections. The first one is based on topological considerations and the second uses the notion of blender, which is a hyperbolic set with very special fractal properties. Both enable to get persistent bifurcations near maps of the form $(z,w) \mapsto (p(z),w^{d}+\kappa)$. The results of Dujardin have been improved by Taflin in \cite{taflin}. Taflin shows that if $p$ and $q$ are two polynomials of degree bounded by $d$ such that $p$ is a polynomial corresponding to a bifurcation in the space of polynomials of degree $d$, then the map $(p, q)$ can be approximated by polynomial skew products having an iterate with a blender and then by open sets of bifurcations. Note that the idea of blender arised in the work of Bonatti and Diaz on real diffeomorphisms (\cite{bd1}) and already appeared in holomorphic dynamics in the work of the author (\cite{bieblerberthollove}). \medskip

 Latt\`es maps are holomorphic endomorphisms of $\mathbb{P}^{2}(\mathbb{C})$ which are semi-conjugate to an affine map on some complex torus $\mathbb{T}$ (see \cite{sj} for a classification and \cite{duber} for a characterisation of Latt\` es maps in terms of the maximal entropy measure). It is natural to be interested in these maps in the context of bifurcation theory because their Julia set is equal to the whole projective space $\mathbb{P}^{2}(\mathbb{C})$. This property seems to have a great potential to create persistent intersection between the postcritical set and the Julia set even after perturbation. Berteloot and Bianchi proved in \cite{du3} that the Hausdorff dimension of the bifurcation locus near a Latt\`es map is equal to that of the parameter space. 
 
 \subsection{Main result}
 
  Dujardin asked in \cite{dm} if it was possible to find open sets of bifurcations near any Latt\`es map. In this article we give a partial answer to this question. Here is our main result : 
\begin{myTheo3}
For every two-dimensional complex torus $\mathbb{T}$, there is an integer $d$ (depending on the torus $\mathbb{T}$) such that every Latt\`es map defined on $\mathbb{P}^{2}(  \mathbb{C}) $ of degree $d'>d$ induced by an affine map on $\mathbb{T}$ is in the closure of the interior of the bifurcation locus in $\mathrm{Hol}_{d'}$. \end{myTheo3}
Let us point out the scarcity of tori which are associated to some Latt\`es example on $\mathbb{P}^{2}(  \mathbb{C})$ (the classification is discussed in section 3). We also remark that the degree $d$ is unknown (the situation here is similar to Buzzard's article \cite{bb1}). Moreover, $d$ depends on the torus $\mathbb{T}$ (see subsection 1.3). This is due to the necessity of making only holomorphic perturbations. As a consequence of the theorem we get : \begin{coo}  For every Latt\`es map $L$ of degree $d$, there is an integer $n(L)$ such that for every $n \ge n(L)$, the iterate $L^{n}$ is in the closure of the interior of the bifurcation locus in $\mathrm{Hol}_{d^{n}}$.  \end{coo}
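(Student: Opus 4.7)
The plan is to reduce the corollary directly to the main theorem by observing that iterating a Lattès map produces a Lattès map associated with the same torus, only of larger degree.

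More precisely, suppose $L$ is a Lattès map of degree $d$ on $\mathbb{P}^2(\mathbb{C})$ semi-conjugate to an affine map $A$ on a two-dimensional complex torus $\mathbb{T}$, via the projection $\pi : \mathbb{T} \to \mathbb{P}^2(\mathbb{C})$, so that $L \circ \pi = \pi \circ A$. First I would point out that iterating this relation gives $L^n \circ \pi = \pi \circ A^n$, so $L^n$ is itself a Lattès map on $\mathbb{P}^2(\mathbb{C})$, induced by the affine map $A^n$ on the \emph{same} torus $\mathbb{T}$. Its algebraic degree is $d^n$.

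Next I would apply the main theorem to the torus $\mathbb{T}$: it provides an integer $d_0 = d_0(\mathbb{T})$ such that every Lattès map on $\mathbb{P}^2(\mathbb{C})$ of degree $d' > d_0$ arising from an affine map on $\mathbb{T}$ lies in the closure of the interior of the bifurcation locus in $\mathrm{Hol}_{d'}$. Since $d \geq 2$, the degrees $d^n$ tend to infinity, so I can choose
\[
n(L) := \min\bigl\{ n \in \mathbb{N} \ : \ d^n > d_0(\mathbb{T}) \bigr\}.
\]
Then for every $n \geq n(L)$ we have $d^n > d_0(\mathbb{T})$, so the theorem applied to $L^n$ yields that $L^n$ belongs to the closure of the interior of the bifurcation locus in $\mathrm{Hol}_{d^n}$, as required.

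There is essentially no obstacle: the only point to check carefully is that $L^n$ is genuinely a Lattès map of degree $d^n$ on the same torus $\mathbb{T}$, which is immediate from the semi-conjugacy. The dependence of the threshold $d_0$ on $\mathbb{T}$ (rather than on $L$ itself) is exactly what makes the argument work, since all iterates $L^n$ share the same underlying torus and therefore the same threshold.
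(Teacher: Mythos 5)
Your argument is correct and is exactly how the paper obtains the corollary: the paper states it as an immediate consequence of the main theorem, relying on the fact that $L^n$ is a Latt\`es map of degree $d^n$ induced by an affine map on the same torus $\mathbb{T}$, so the torus-dependent threshold eventually falls below $d^n$. Nothing more needs to be said.
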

The Theorem also implies that there are no open subsets of Latt\`es maps in the family of endomorphisms of $\mathbb{P}^{2}(\mathbb{C})$ (if one does not need to iterate). Indeed, for such an open set of Latt\`es maps, the Lyapunov exponents would be minimal (see \cite{duber}) and the sum of Lyapunov exponents would be pluriharmonic, but the Theorem implies that this set intersects open sets of bifurcations where the sum of Lyapunov exponents is not pluriharmonic (by \cite{du9}). 

 \subsection{Outline of proof}

 To prove this result, we create persistent intersections between the postcritical set and a hyperbolic repeller contained in the Julia set. Our proof has two main parts : first, we create a toy-model which allows to obtain intersections between the limit set of some particular type of IFS, called correcting IFS, and a quasi-line that is "well-oriented". Then, in a second time, we perturb the Latt\`es map to create both the correcting IFS and the well-oriented curve inside the postcritical set. This construction exhibits properties somehow similar to the \emph{blenders} of Bonatti-Diaz (\cite{bd1}), with the difference that the covering property holds at the level of the tangent maps of the IFS (see also the notion of  \emph{parablenders} appeared in the work of Berger (\cite{be})).

 \medskip
\medskip

In a first part, we develop an intersection principle (see Proposition 2.1.6). A grid of balls $G$ in $\mathbb{C}^{2}$ is the union of a finite number of balls regularly located at $N^{4}$ vertices of a lattice defined by a $\mathbb{R}$-basis of $\mathbb{C}^{2}$. If we consider a line $\mathfrak{C}$, a pigeonhole argument ensures that if $\mathfrak{C}$ is well oriented and $G$ has a sufficient number of balls $N=N(r)$ (where $r$ is the relative size of a ball compared to the mesh of the grid) then $\mathfrak{C}$ intersects a ball of $G$. We consider a class of IFS such that each inverse branch is very close to a homothety. When we iterate them, a drift can appear : the iterates become less and less conformal. Our class of IFS (called correcting IFS) is designed so that they have the property of correcting themselves from the drift. A linear correction principle is given in Proposition 2.2.2. In subsections 2.3 and 2.4, we treat the case of a curve close to a line and an IFS close to be linear. Our interest in such IFS is that any well-oriented quasi-line $\mathfrak{C}$ intersects the limit set of a correcting IFS. To prove this result, which is Proposition 2.4.1, we ensure that at each step the quasi-line $\mathfrak{C}$ intersects a grid of ball $G^{j}$ which is dynamically defined with the inverse branches of the IFS. Then we use inductively the intersection and the correction principles to ensure that at the next step, $\mathfrak{C}$ intersects a grid of balls $G^{j+1}$ with bounded drift. The intersection of the grids $G^{j}$ is in the limit set, so we produce an intersection between $\mathfrak{C}$ and the limit set of the IFS. Since the property of being correcting is open, this intersection is persistent. \medskip

In the second part, we make three successive perturbations of a Latt\`es map $L$, denoted by $L'$, $L''$ and $L'''$, in such a way that $L'''$ has a robust bifurcation. We work in homogenous coordinates and do explicit perturbations of the following form :
$$ [P_{1} : P_{2} : P_{3}] \rightarrow  [P_{1}+R_{1}P_{3} : P_{2}+R_{2}P_{3} : P_{3}]$$
where $R_{1}$ and $R_{2}$ are rational maps. An important technical point (Proposition 3.2.1) is that we can choose the coordinates so that $P_{3}$ splits. Then if $R_{1}$ and $R_{2}$ are well chosen the degree does not change. The first perturbation $L'$ (Propositions 4.4.4 and 4.4.5) is intended to create a correcting IFS in a ball $\mathfrak{B}$ in $\mathbb{C}^{2}$. Another important technical point is that we can find some critical point $c$ which is preperiodic, with associated periodic point $p_{c}$ such that both the preperiod $n_{c}$ and the period $n_{pc}$ of the preperiodic critical orbit are bounded independently of $L$ (see Proposition 3.3.1). Then we want to create a well-oriented quasi-line inside the postcritical set which intersects $\mathfrak{B}$. The second perturbation $L''$ in Lemma 4.5.10 ensures that the postcritical set at $p_{c}$ is not singular. The third and last perturbation $L'''$ is given in Lemma 4.5.11. It is intended to control the differential at $p_{c}$. This allows us to fix the orientation of the postcritical set at $p_{c}$ and then we use the linear dynamics of the Latt\`es map $L$ on the torus $\mathbb{T}$ in order to propagate this geometric property up to $\mathfrak{B}$ (see Proposition 4.5.3). Note that the periodic point need not lie in $\mathfrak{B}$. At this stage we have both a correcting IFS and a well-oriented quasi-line so we are in position to conclude in section 5.
\medskip

In particular, let us point out that the bound $d$ on the degree is fixed in 4.2.12, 4.2.13 and 4.2.14 : $d = \max(d^{1},d^{2},d^{3})$. Here $d^{1}$ is fixed to ensure that there are sufficiently many inverse branches in the IFS to apply Proposition 2.4.1. $d^{2}$ is intended to make the first perturbation possible in Proposition 4.4.4. (section 2 plays an important role in the determination of $d^{2}$). Similarly, $d^{3}$ is fixed to allow the second and third perturbations in Lemmas 4.5.10 and 4.5.11 along the periodic orbit (whose length is bounded in subsection 3.3 and important to fix $d^{3})$. It is also interesting to remark that the bound $d^{2}$ comes from an interpolation. This has some similarities with the article \cite{bb1} where Buzzard uses a Runge approximation with polynomial automorphisms of sufficiently high degree in order to prove the existence of Newhouse phenomenon in the complex setting. In particular, $d^{1}$ depends on the torus (the number of inverse branches depends on the size of a ball $\mathfrak{B}$ depending on $\mathbb{T}$) and it is also the case for $d^{2}$ (which depends on the integer $i(\mathbb{T})$ defined in Proposition 3.2.1). 
\medskip

In section 2, we develop the theory of intersection between a quasi-line and the limit set of a correcting IFS : the intersection principle and the correction principle are respectively stated in subsections 2.1 and 2.2 and we prove the intersection result in subsection 2.4. In section 3, we provide background on Latt\`es maps and prove a few properties which will be useful later. Some complications arise from Latt\`es maps whose linear part is not the identity. In section 4, we develop the perturbative argument. After giving some preliminaries (subsection 4.1) and fixing many constants (subsections 4.2 and 4.3), we create a correcting IFS in subsection 4.4. In subsection 4.5, we create a well oriented curve inside the postcritical set. Finally, we conclude in section 5 by applying the formalism of subsection 2.4 to the perturbed map $L'''$. \medskip

\textbf{Acknowledgments :}
 The author would like to thank his PhD advisor, Romain Dujardin. This research was partially supported by the ANR project LAMBDA, ANR-13-BS01-0002.

\section{Intersecting a curve and the limit set of an IFS}

\subsection{Linear model}
In this section, we will work with an IFS, whose maps are small perturbations of homotheties of the form $\frac{1}{a} \cdot \text{Id}$ with $a \in \mathbb{R}^{*}$ and $|a|>1$. This IFS will be obtained by perturbating a Latt\`es map and its limit set will have persistent intersections with a curve.

\newtheorem{df}[subsubsection]{Definition}
\newtheorem{pr}[subsubsection]{Proposition}
\newtheorem{lem}[subsubsection]{Lemma}

\begin{df} \label{dfp}
Given a $\mathbb{R}$-basis $(u_{1},u_{2},u_{3}, u_{4}) \in (\mathbb{C}^{2})^{4}$, a point $o \in \mathbb{C}^{2}$, an integer $N$ and $r \in (0,1)$, by a grid of balls we mean the union of the balls of radius $r. \min_{1 \le i \le 4} ||u_{i}||$ centered at the points $o+iu_{1}+ju_{2}+ku_{3}+klu_{4}$ where $-N \le i,j,k,l \le N$ are integers. We will denote it by $G = (u,o,N,r)$. The middle part of $G$ is the set $\{o+xu_{1}+yu_{2}+zu_{3}+wu_{4}, 0 \le |x|,|y|,|z|,|w| \le\frac{N}{2}\}$. The hull of $G$ is the set $\{o+xu_{1}+yu_{2}+zu_{3}+wu_{4}, 0 \le |x|,|y|,|z|,|w| \le N\}$. The size of $G$ is $\text{size}(G) =2N \cdot  \max_{1 \le i \le 4} ||u_{i}||$. 

\end{df}
In the following, the parameter $r$ will be bounded from below and we will let $\max_{1 \le i \le 4}  ||u_{i}|| \rightarrow 0$ so that the radius of the balls $r. \text{min}_{1 \le i \le 4}  ||u_{i}||$ will tend to 0. The integer $N$ will be taken sufficiently large to satisfy some conditions  depending on the degree of the Latt\`es map. Herebelow the notions of "opening" and "slope" are relative to the standard euclidean structure of $\mathbb{C}^{2}$.

 \begin{nt5}
 For a non zero vector $w \in \mathbb{C}^{2}$ and $\theta>0$, we will denote $C^{w,\theta}$ the cone of opening $\theta$ centered at $w$. 
 \end{nt5}
  \begin{nt5}
 For any quadruple of non zero vectors $w_{1},w_{2},w_{3},w_{4}$ in $\mathbb{C}^{2}$, we will denote $\overline{w} = \overline{(w_{1},w_{2},w_{3},w_{4})}$ its projection onto $\mathbb{P}(\mathbb{R}^{8})$.  For any matrix $U \in \mathrm{GL}_{2}(\mathbb{C})$, we simply denote by $U \cdot$ the  induced action on $\mathbb{P}(\mathbb{R}^{8})$. 
\end{nt5}

\begin{df} \label{dfpp}
The middle part of a ball (resp. the $\frac{3}{4}$-part) is the ball of same center and $\frac{1}{2}$ times its radius (resp. $\frac{3}{4}$ times its radius).
\end{df}

\begin{df} \label{def33}
A holomorphic curve $C$ is a $(\varepsilon,w)$-quasi-line if $C$ is a graph upon a disk in $\mathbb{C} \cdot w$ of slope bounded by $\varepsilon$ relative to the projection onto $w$. A $(\varepsilon,w)$-quasi-diameter of a ball $\mathbb{B}$ is a $(\varepsilon,w)$-quasi-line $C$ intersecting the ball of same center as $\mathbb{B}$ and of radius $\frac{1}{10}$ times the radius of $\mathbb{B}$.

\end{df}
Here is our "intersection principle" : 

\begin{pr}[Intersection Principle] \label{r42}
 For every $u \in (\mathbb{C}^{2})^{4}$, $r>0$, $\eta>0$ and $w_{0} \in \mathbb{C}^{2}$, there exists a neighborhood $\mathcal{N}(u)$ of $\overline{u}$ in $\mathbb{P}(\mathbb{R}^{8})$, there exists $\theta>0$, $N(r) > 0$ and a vector $w \in \mathbb{C}^{2}$ with $     || w-w_{0} ||<\eta   $ such that the following property (P) holds :
 \newline
 \newline
 (P)  For every grid of balls $G = (v,o,N,r)$ such that $ \overline{v}  \in \mathcal{N}(u)$ and $N > N(r)$, for every $(\theta,w)$-quasi-line of direction in $C^{w,2\theta}$ intersecting the middle part of the grid of balls $G$, there is a non empty intersection between the $(\theta,w)$-quasi-line and the middle part of one of the balls of the grid. \newline \newline Moreover, property (P) stays true for $w'$ sufficiently close to 
 $w$.
  \end{pr}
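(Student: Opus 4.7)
The plan is a pigeonhole argument in the real two-dimensional orthogonal complement of the complex line $\mathbb{C}\cdot w$. After translating, we may assume $o=0$; write $V := \mathbb{C}\cdot w \subset \mathbb{C}^{2}$, let $V^{\perp}$ be its real orthogonal complement (also two-real-dimensional), and let $P : \mathbb{C}^{2} \to V^{\perp}$ denote orthogonal projection. The centres of the grid's balls lie in the lattice $L := \bigoplus_{i=1}^{4} \mathbb{Z}\,u_{i}$, and I will measure closeness to the quasi-line through $P$. If $C$ is a $(\theta,w)$-quasi-line written as a graph $v \mapsto (v,g(v))$ over a disk in $V$ with $\|g'\|\le\theta$, then for any $p$ in the hull of $G$ and any reference point $c_{0}\in C$ inside the middle part of $G$ one checks
\[
\mathrm{dist}(p,C) \;\le\; \|P(p)-P(c_{0})\| + C_{0}\,\theta\,N\,\max_{i}\|u_{i}\|,
\]
the nearest point of $C$ remaining in the middle part of $C$ as soon as $p$ lies in the middle part of $G$. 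So it suffices to find a lattice point $p\in L$ in the middle part with $\|P(p)-P(c_{0})\|\le \rho/4$, where $\rho:=r\min_{i}\|u_{i}\|$ is the ball radius.

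Next I choose $w$ within $\eta$ of $w_{0}$ in generic position with respect to $L$, so that the four projected vectors $P(u_{1}),\ldots,P(u_{4})\in V^{\perp}\cong\mathbb{R}^{2}$ generate a dense additive subgroup $\Gamma := P(L)$. This is an open dense condition: its failure is a countable union of proper real-analytic sets describing $\mathbb{Z}$-linear relations among the $P(u_{i})$'s. From the density of $\Gamma$ I extract two $\mathbb{R}$-linearly independent short vectors $\gamma_{1}=P(l_{1})$ and $\gamma_{2}=P(l_{2})$ with $\|\gamma_{j}\|\le \rho/8$, writing $l_{j}=\sum_{i}n_{i}^{(j)}u_{i}\in L$ and setting $M:=\max_{i,j}|n_{i}^{(j)}|$. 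The sublattice $\mathbb{Z}\gamma_{1}+\mathbb{Z}\gamma_{2}\subset V^{\perp}$ has covolume at most $\|\gamma_{1}\|\|\gamma_{2}\|\le\rho^{2}/64$ and is therefore $(\rho/4)$-dense in $V^{\perp}$.

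Fix now any $c_{0}$ in the middle part; since $\|P(c_{0})\|\le CN\max_{i}\|u_{i}\|$ there exist integers $m_{1},m_{2}$ with $|m_{j}|\le C_{1}N\max_{i}\|u_{i}\|/\rho$ such that $\|m_{1}\gamma_{1}+m_{2}\gamma_{2}-P(c_{0})\|\le\rho/4$. The lattice point $p:=m_{1}l_{1}+m_{2}l_{2}$ then satisfies $|p|_{L}\le 2MC_{1}N\max_{i}\|u_{i}\|/\rho$, which is $\le N/2$ as soon as $N\ge N(r):=4MC_{1}\max_{i}\|u_{i}\|/\rho$; this defines the threshold $N(r)$, and is legitimate because $M$ depends only on $u$, $w$, $r$ and not on $N$. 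With $N$ thus fixed I take $\theta$ small enough that $C_{0}\theta N\max_{i}\|u_{i}\|\le\rho/8$, obtaining
\[
\mathrm{dist}(p,C) \;\le\; \rho/4 + \rho/8 \;<\; \rho/2,
\]
so $C$ meets the middle part of the ball at $p$. The neighborhood $\mathcal{N}(u)$ is the open set on which the chosen $\gamma_{1},\gamma_{2}$ remain $\mathbb{R}$-linearly independent and of norm $\le\rho/8$; the same continuity provides robustness in $w'$ close to $w$.

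The main difficulty is the Diophantine step: producing two $\mathbb{R}$-independent elements of $\Gamma$ whose Euclidean norms in $V^{\perp}$ are $\le\rho/8$ and that are built from lattice elements of \emph{bounded} $L$-norm $M$. This is the only place where the density of $\Gamma$ is genuinely used, and it is precisely what ensures that $N(r)$ depends on $r$ alone (once $u$ and $w$ are fixed) rather than on the size of $P(c_{0})$. Everything else---the geometric slope estimate and the openness conclusion---is routine bookkeeping from the continuity of $P$ in $(u,w)$.
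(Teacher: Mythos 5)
Your route differs from the paper's (which chooses $w$ explicitly as a rational vector plus hierarchically small corrections $\tfrac{1}{m\beta},\dots,\tfrac{1}{m^{4}\beta}$ and follows the points $x_{k}=x_{0}+kw$ until their integer parts hit a good residue class mod $m$), and the projection-onto-$V^{\perp}$ pigeonhole is in principle a viable alternative; but your final bookkeeping step contains a genuine gap. You approximate $P(c_{0})$ by $m_{1}\gamma_{1}+m_{2}\gamma_{2}$ starting from the origin, which forces $|m_{j}|$ of order $N\max_{i}\|u_{i}\|/\rho$, and you then claim that $p=m_{1}l_{1}+m_{2}l_{2}$ has $L$-coordinates at most $2MC_{1}N\max_{i}\|u_{i}\|/\rho\le N/2$ ``as soon as $N\ge 4MC_{1}\max_{i}\|u_{i}\|/\rho$''. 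Both sides of that inequality scale linearly in $N$, so the threshold on $N$ is irrelevant: the inequality would require $4MC_{1}\max_{i}\|u_{i}\|/\rho\le 1$, which is false since $\rho=r\min_{i}\|u_{i}\|$ with $r<1$ and $M,C_{1}\ge 1$. Concretely, to move a distance $\sim N\max_{i}\|u_{i}\|$ in $V^{\perp}$ using steps whose $V^{\perp}$-components are $\le\rho/8$ you need $\gtrsim N\max_{i}\|u_{i}\|/\rho$ steps, each displacing you by up to $4M\max_{i}\|u_{i}\|$ in $\mathbb{C}^{2}$; so your $p$ in general lies far outside the hull of $G$ (and outside the disk over which the quasi-line is a graph), hence is not the center of any ball of the grid and the distance estimate cannot be applied to it.

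The repair is to anchor the approximation at a lattice point near $c_{0}$ rather than at the origin: let $p_{0}\in L$ be obtained by rounding the $u$-coordinates of $c_{0}$, so that $p_{0}$ has coordinates $\le N/2+1$ and $\|P(p_{0})-P(c_{0})\|\le\tfrac12\sum_{i}\|u_{i}\|\le 2\max_{i}\|u_{i}\|$; only this $N$-independent discrepancy needs correcting by $m_{1}\gamma_{1}+m_{2}\gamma_{2}$, so $|m_{j}|$ is bounded by a constant depending on $u,w,r$ but not on $N$, and $p:=p_{0}+m_{1}l_{1}+m_{2}l_{2}$ has coordinates $\le N$ (hence is a ball center of $G$) once $N$ exceeds a genuine threshold $N(r)$; moreover $\|p-c_{0}\|=O(\max_{i}\|u_{i}\|)$, which also tames your slope term. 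Two smaller points: small covolume of $\mathbb{Z}\gamma_{1}+\mathbb{Z}\gamma_{2}$ does not by itself give density (a thin lattice has small covolume and large covering radius) --- what you actually need, and have, is that both generators are short, giving covering radius at most $\tfrac12(\|\gamma_{1}\|+\|\gamma_{2}\|)\le\rho/8$; and density of $P(L)$ is a generic but not an open condition in $w$, which is harmless only because, as you note, the robustness in $(u,w,w')$ is carried by the finitely many fixed vectors $l_{1},l_{2}$ and not by the density statement itself.
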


  \begin{proof} Let us first prove the result in the case of a line intersecting the grid of balls. After composition by a real linear isomorphism if necessary, we can suppose $(u_{1},u_{2},u_{3},u_{4}) = (e_{1},ie_{1},e_{2},ie_{2})$ where $e_{1} = (1,0)$ and $e_{2} = (0,1)$ so that the centers of the balls of the grid have integer coordinates. Let us take $w_{1} = \frac{\alpha_{1}}{\beta}e_{1}+\frac{\alpha_{2}}{\beta}ie_{1}+  \frac{\alpha_{3}}{\beta}e_{2}+\frac{\alpha_{4}}{\beta}ie_{2}$ such that $|| w_{1}-w_{0} ||<\eta  $ with rational coordinates $\alpha_{1},\alpha_{2},\alpha_{3},\alpha_{4}, \beta \in \mathbb{Z}$. We take $m= \lfloor \frac{10}{r} \rfloor$. Then, let us take the vector $w = w_{1}+ \frac{1}{m\beta}e_{1}+\frac{1}{m^{2}\beta}ie_{1}+\frac{1}{m^{3}\beta}e_{2}+\frac{1}{m^{4}\beta}ie_{2}$ and $N > 10\beta m^{5}=  N(r)$. We can increase $m$ if necessary so that $w$ satisfies $     || w-w_{0} ||<\eta   $.

   \begin{lem}There is a non empty intersection between any line of direction in $C^{w,2\theta}$ intersecting the middle part of the grid of balls $G$ and the middle part of one of the balls of the grid of balls if $\theta$ is sufficiently small.\end{lem}

   \begin{proof} We divide each mesh of the lattice into $m^{4}$ hypercubes. To each of these  hypercubes, we can assign the quadruple of integers given by the coordinates of a given corner. Taking new coordinates by making a translation if necessary, we can suppose that the union of the middle parts of the balls of the lattice contains the union of the hypercubes whose four coordinates are all equal to 0 modulo $m$. Let us take a point $x_{0}$ of the line inside the middle part of the lattice, and for every $k \in \mathbb{N}$, we denote : $x_{k} = x_{0}+k \cdot w$. Then, we have that : $$\lfloor x_{k+\beta m,1} \rfloor \equiv  \lfloor x_{k,1} \rfloor +1 \pmod m \text{  and  } \lfloor x_{k+\beta m^{2},2} \rfloor  \equiv \lfloor x_{k,2} \rfloor +1  \pmod m$$ 
  $$\lfloor x_{k+\beta m^{3},3} \rfloor  \equiv \lfloor  x_{k,3} \rfloor +1 \pmod m \text{  and  } \lfloor x_{k+\beta m^{4},4} \rfloor  \equiv \lfloor x_{k,4} \rfloor +1 \pmod m$$
  Since $N >10\beta m^{5} = N(r)$, the previous relations imply there exists some $x_{n}$ which intersects some hypercube of integer coordinates congruent to $(0,0,0,0)$ inside the grid of balls. This implies that the line intersects the middle part of one of the balls of the grid.  \qedhere
  
  \end{proof}

  This intersection persists for any line of direction in $C^{w,2\theta}$ and for any $ \overline{v}$ in a small neighborhood $\mathcal{N}(u)$ of $\overline{u}$. Then, the result stays true if we take $(\theta,w)$-quasi-lines for $\theta$ sufficiently small since property (P) is open for the $C^{1}$ topology and $w'$ sufficiently close to $w$. \qedhere

\end{proof}
The following corollary gives the same conclusion as the previous result but this time with more than one possible direction for the quadruple of vectors of the lattice.
\begin{co5} \label{r55}
 For every finite subgroup $\mathcal{M} \subset \mathrm{Mat}_{2}(\mathbb{C})$, for every $u \in (\mathbb{C}^{2})^{4}$, there exists a neighborhood $\mathcal{N}(u)$ of $\overline{u}$ in $\mathbb{P}(\mathbb{R}^{8})$ such that for every $r>0$, there exists $\theta>0$, $N(r) > 0$ and a vector $w \in \mathbb{C}^{2}$ such that the following property (P) holds :
 \newline
 \newline
 (P)  For every $U \in \mathcal{M}$, for every grid of balls $G= (v,o,N,r)$ with $ \overline{v} \in \mathcal{N}(u) \cup U \cdot  \mathcal{N}(u) \cup  \cdot \cdot \cdot \cup U^{\text{ord}(U)-1} \cdot  \mathcal{N}(u)$ and $N > N(r)$, for every $(\theta,w)$-quasi-line of direction in $C^{w,2\theta}$ intersecting the middle part of $G$, there is a non empty intersection between the $(\theta,w)$-quasi-line and the middle part of one of the balls of the grid. \newline \newline Moreover, this proposition remains true for $w'$ sufficiently close to $w$.

  \end{co5}
\begin{proof}
We just have to apply $\text{ord}(\mathcal{M})$ times Proposition \ref{r42}.
\end{proof}

\subsection{Linear correction principle} 
  
  \begin{nt5}
We will denote by $\mathrm{Mat}_{2}(\mathbb{C})$ the metric space of $(2,2)$ complex matrices with the distance induced by the norm $||.||=||.||_{2,2}$. 
\end{nt5}

In the following, $x$ will be a real positive parameter. We remind that in a first reading it is advised to assume that $U = I_{2}$. The following proposition is the "linear correction principle" we discussed in the introduction.
 
\begin{pr}[Linear correction principle] \label{r45}
 For every finite subgroup $\mathcal{M} \subset \mathrm{Mat}_{2}(\mathbb{C})$, there exists an integer $n>0$, $(n+1)$ balls $V^{0},V^{1},...,V^{n} \subset \mathrm{Mat}_{2}(\mathbb{C})$ such that for every $0<x<1$, there exists a neighborhood $\mathcal{U}_{x} $ of $I_{2}$ in $\mathrm{GL}_{2}(\mathbb{C})$, two open sets $\mathcal{U}'_{x}  \subset \mathcal{U}''_{x} \subset \mathrm{GL}_{2}(\mathbb{C})$ which are union of balls $\mathcal{U}'_{x}  = \bigcup_{1 \le p \le n} (\mathcal{U}'_{x})^{p}$ and $\mathcal{U}''_{x}  = \bigcup_{1 \le p \le n} (\mathcal{U}''_{x})^{p}$ such that : $ (\mathcal{U}'_{x})^{p} \subset  (\mathcal{U}''_{x})^{p}$ for each $1 \le p \le n$ with the following properties : 
\newline  \newline
(i) If $M  \in \mathcal{U}_{x}$, $U \in \mathcal{M}$ and $j \in \mathbb{N}$, then for every $M_{0} \in (x \cdot V^{0}$) : $$ U^{j}MU(I_{2}+M_{0})   \in U^{j+1} \cdot (\mathcal{U}_{x} \cup \mathcal{U}'_{x}) $$
 (ii) If $M \in \mathcal{U}'_{x}$, $U \in \mathcal{M}$ and $j \in \mathbb{N}$, then there exist two integers $1 \le p,p' \le n$ such that $M \in   (\mathcal{U}'_{x})^{p}$ with the property that for every $M_{0} \in (x \cdot  V^{0})$ and for every $M_{p'} \in (x \cdot V^{p'})$, we have : $$ U^{j}MU(I_{2}+M_{0}) \in  U^{j+1} \cdot (\mathcal{U}''_{x})^{p}$$ 
  $$ U^{j}MU(I_{2}+M_{0})U(I_{2}+M_{p'}) \in U^{j+2} \cdot \mathcal{U}_{x} $$

 \end{pr}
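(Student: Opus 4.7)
The plan is to work in additive coordinates near the identity. Writing $M = I_2 + A$ and expanding directly yields
$$U^{-(j+1)}\bigl(U^j M\, U(I_2+M_0)\bigr) \;=\; I_2 + U^{-1}AU + M_0 + U^{-1}AU\cdot M_0,$$
so one step sends $A$ to $A' = U^{-1}AU + M_0 + O(\|A\|\,\|M_0\|)$, and iterating once more with a second drift $M_{p'}$ gives $A'' = U^{-2}AU^2 + U^{-1}M_0U + M_{p'} + O(x^2)$. Since $\mathcal{M}$ is a finite subgroup of $\mathrm{Mat}_{2}(\mathbb{C})$, I fix a submultiplicative $\mathcal{M}$-invariant norm $\|\cdot\|_*$ (obtained by averaging any such norm over the finite orbit), so that $\|U^{-1}AU\|_* = \|A\|_*$ for every $U\in\mathcal{M}$; this kills the conjugation factor from the linearized step.

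I then build the balls. Fix a small $R_0>0$, set $V^0 := \{A : \|A\|_* < R_0\}$, and choose a mesh $r>0$ together with a finite family of centers $\{c^q\}_{1\le q\le n}$ forming an $r$-net of a sufficiently large ball containing $V^0$, and closed up to error $r$ under the linear maps $c\mapsto -U^{-2}cU^2$ for $U\in\mathcal{M}$. Since $\mathcal{M}$ is finite and these maps are isometries of $\|\cdot\|_*$, the orbit closure of a finite set is finite, and an $r$-net can be chosen to contain it. For $p\ge 1$ set $V^p := \{A : \|A-c^p\|_* < r\}$, and in the invariant norm define
$$\mathcal{U}_x := \{I_2+A : \|A\|_* < C_1 x\},\qquad (\mathcal{U}'_x)^p := \{I_2+A : \|A-xc^p\|_* < C_2 x\},$$
with $(\mathcal{U}''_x)^p$ the same centered ball of larger radius $C_3 x$.

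The verification is now direct. For (i), given $M=I_2+A\in\mathcal{U}_x$ and $M_0\in xV^0$, I pick the $c^p$ in the net closest to $x^{-1}A'$; $r$-density gives $\|A'-xc^p\|_* \le rx + O(x^2)$, placing $I_2+A'$ in $(\mathcal{U}'_x)^p$ as soon as $C_2>r$ plus slack for the quadratic term. For (ii), writing $A = xc^p + E$ with $\|E\|_*\le C_2 x$, I select $p'$ with $c^{p'}$ an $r$-approximation to $-U^{-2}c^pU^2$; then for any $M_0\in xV^0$ and $M_{p'} = xc^{p'} + xE' \in xV^{p'}$ the main term $xU^{-2}c^pU^2$ cancels $xc^{p'}$ up to $rx$, leaving
$$\|A''\|_* \;\le\; \|E\|_* + \|M_0\|_* + 2rx + O(x^2) \;\le\; (C_2 + R_0 + 2r)x + O(x^2),$$
so $I_2+A''\in\mathcal{U}_x$ once $C_1 \ge C_2 + R_0 + 2r$ plus slack; and $I_2+A'\in(\mathcal{U}''_x)^p$ once $C_3 \ge C_2 + R_0 + \|c^p\|_*$ plus slack.

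The main subtlety is a potential circular feedback between $C_1$ and $C_2$: a naive estimate would give $C_2 \ge C_1 + r$ (to fit the full forward drift of $A$ in (i)) together with $C_1 \ge C_2 + R_0 + 2r$ (to absorb the residual after correction in (ii)), which is inconsistent. The resolution is that in (i) the center $c^p$ may be chosen to track $x^{-1}A'$ itself, rather than a position determined by $M_0$ only, so that only the partition mesh $r$ enters the bound; this decouples $C_2$ from $C_1$ and permits the consistent choice, for instance $C_2 = r+1$, $C_1 = R_0 + 2r + 2$. The quadratic errors $O(x^2)$ are bounded by a constant times $x$ for $x<1$ and are absorbed into the slack by taking $R_0$ sufficiently small. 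Finiteness of $\mathcal{M}$ is essential both for the existence of the invariant norm and for the finiteness of the orbit-closed net of centers.
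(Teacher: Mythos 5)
Your construction is, at bottom, the paper's: a finite net of possible drift positions, correction balls centered at (conjugates of) the negatives of the net points, and equivariance under the finite group (your conjugation-invariant norm is a clean substitute for the paper's requirement that the family of balls be permuted by multiplication by $U$). However, there is a genuine quantitative gap in the uniformity over $x$. The proposition fixes $V^{0},\dots,V^{n}$ \emph{before} $x$ and demands (i)--(ii) for \emph{every} $0<x<1$, and your explicit constants do not deliver this for $x$ near $1$. Concretely: property (i) forces your net $\{c^{q}\}$ to be $r$-dense in a ball of radius about $C_{1}+R_{0}\ge 2$ (since $x^{-1}A'$ can have norm up to $C_{1}+R_{0}$), so in step (ii) the centers $c^{p},c^{p'}$ may have norm of order $2$--$3$. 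The uncancelled cross terms in the two-step expansion, such as $U^{-2}AU^{2}\cdot M_{p'}$, then have norm of order $\|c^{p}\|\,\|c^{p'}\|\,x^{2}\gtrsim 4x^{2}$, while the slack available in $\mathcal{U}_{x}$ with $C_{2}=r+1$, $C_{1}=R_{0}+2r+2$ is only $(C_{1}-C_{2}-R_{0}-2r)x=(1-r)x$. For $x$ close to $1$ the quadratic error exceeds the slack, and your proposed remedy (``take $R_{0}$ sufficiently small'') does not address it: the dominant quadratic contributions come from the centers, whose size is driven by the absolute constant $2$ inside $C_{1}$, not by $R_{0}$. So as written, the second inclusion of (ii) fails on a range of $x$ that the statement covers.

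The repair is to shrink the \emph{whole} configuration, not just $V^{0}$: take the mesh, the radius of $V^{0}$, and the additive slacks all proportional to one small parameter, so that every center and radius is $O(\varepsilon)$, quadratic errors are $O(\varepsilon^{2})x$, and the slack is $O(\varepsilon)x$, uniformly in $x\in(0,1)$. This is exactly how the paper handles it: all balls live at scale $r$ (covering by balls of radius $\tfrac{1}{20}r$, sets of radius $\tfrac{1}{10}r$ and $\tfrac{9}{10}r$ around $I_{2}$, Taylor error $O(r^{2})$ beaten by slack of order $r$), and only then is the homothety of ratio $x$ centered at $I_{2}$ applied. Two smaller points: the group-average of a submultiplicative norm need not be submultiplicative (take $\|A\|_{*}=\max_{U\in\mathcal{M}}\|U^{-1}AU\|$ instead, which is), and your bound for $C_{3}$ should read $C_{3}\ge C_{2}+R_{0}+2\max_{q}\|c^{q}\|_{*}$ plus slack, because of the term $x\|U^{-1}c^{p}U-c^{p}\|_{*}$ in the first inclusion of (ii); this last slip is harmless since nothing bounds $C_{3}$ from above in this proposition.
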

\begin{proof}
We consider the vector space $\mathrm{Mat}_{2}(\mathbb{C}) \simeq \mathbb{R}^{8}$. Let us consider a covering of the sphere of center 0 of radius $r$ (which will be chosen later) $S(0,r)$ by $n$ balls $B(X_{i},\frac{1}{20}r)$ of radius $\frac{1}{20}r$. The following geometrical lemma is trivial : 

\begin{lem} \label{huryy} 
For every $1 \le p \le n$, $X \in B(X_{i},\frac{1}{10}r)$, we have : $||X-X_{i}||< \frac{1}{2}r$

\end{lem}

Now, let us call $\mathcal{U}_{1} = B(I_{2},r)$, $(\mathcal{U}'_{1})^{p} = B(I_{2}+X_{i},\frac{1}{20}r)$ and $(\mathcal{U}''_{1})^{p} = B(I_{2}+X_{i},\frac{1}{10}r)$ for each $1 \le p \le n$, $\mathcal{U}'_{1}  = \bigcup_{1 \le p \le n} (\mathcal{U}'_{1})^{p} $ and  $\mathcal{U}''_{1}  = \bigcup_{1 \le p \le n} (\mathcal{U}''_{1})^{p} $. Increasing the number $n$ of open sets $(\mathcal{U}'_{1})^{p}$ if necessary, we can suppose that for every $U \in \mathcal{M}$ and for each $p \le n$, there exists $p' \le n$ such that $(\mathcal{U}'_{1})^{p } \cdot U = U \cdot (\mathcal{U}'_{1})^{p'}$ and $(\mathcal{U}''_{1})^{p} \cdot U = U \cdot (\mathcal{U}''_{1})^{p'}$.

\begin{lem}
There exists $r_{0}>0$ such that if $r<r_{0}$, for every $Y \in (\mathcal{U}''_{1})^{p}  $ : $Y(I_{2}-X_{i}) \in B(I_{2},\frac{1}{2}r)$.
\end{lem}

\begin{proof} The Taylor formula gives us that at 0 at the first order in $X$ : $$(I_{2}+X)(I_{2}-X_{i}) = I_{2}+X-X_{i} +O(r^{2})$$ Then, if $r$ is sufficiently small, Lemma \ref{huryy} implies that for every $X \in B(X_{i},\frac{1}{10}r)$ : $$           (I_{2}+X)(I_{2}-X_{i}) \in B(I_{2},\frac{1}{2}r)$$  
This means that for every $Y \in (\mathcal{U}''_{1})^{p}  $ : $Y(I_{2}-X_{i}) \in B(I_{2},\frac{1}{2}r)$.
 \qedhere

\end{proof}
Now, it is clear it is possible to take sufficiently small balls $V^{0},V^{1},...,V^{n}$ centered at $0,-X_{1},...,-X_{n}$ such that : \newline - If $M  \in \mathcal{U}_{1}$, then for every $M_{0} \in V^{0}$, we have : $ M(I_{2}+M_{0})   \in (\mathcal{U}_{1} \cup \mathcal{U}'_{1}) $
 \newline - If $M \in \mathcal{U}'_{1}$, then there exists $1 \le p \le n$ such that $M \in (\mathcal{U}'_{1})^{p} $ and for every $M_{0} \in V^{0}$, we have : $ M(I_{2}+M_{0}) \in (\mathcal{U}''_{1})^{p} $.\newline \newline The previous lemma implies that if $M \in (\mathcal{U}'_{1})^{p} $ and $M_{0} \in V^{0}$ are such that $ M(I_{2}+M_{0}) \in (\mathcal{U}''_{1})^{p}  $, then for every $M_{p} \in V^{p}$, we have that : $ M(I_{2}+M_{0})(I_{2}+M_{p}) \in    \mathcal{U}_{1} $. Then, properties $(i)$ and $(ii)$ are verified for $x = 1$ and $U = I_{2}$. For each $0<x<1$, let us take the balls $x \cdot V^{0}, x \cdot V^{1},...,x \cdot V^{n} \subset \mathrm{Mat}_{2}(\mathbb{C})$ and let us apply the homothety of factor $x$ of center $I_{2}$ to the sets $\mathcal{U}_{1}$, $\mathcal{U}'_{1}$, $\mathcal{U}''_{1}$, $(\mathcal{U}'_{1})^{p}$ and $(\mathcal{U}''_{1})^{p}$ to get the sets $\mathcal{U}_{x}$, $\mathcal{U}'_{x}$, $\mathcal{U}''_{x}$, $(\mathcal{U}'_{x})^{p}$ and $(\mathcal{U}''_{x})^{p}$ such that properties $(i)$ and $(ii)$ are verified for $x < 1$ and $U = I_{2}$. \newline \newline
 Let us now suppose that $U \neq I_{2}$. The inclusions $ U^{j}MU(I_{2}+M_{0})   \in U^{j+1} \cdot ( \mathcal{U}_{x} \cup \mathcal{U}'_{x}) $ and $ U^{j}MU(I_{2}+M_{0}) \in  U^{j+1} \cdot (\mathcal{U}''_{x})^{p}$ are still true by reducing $V^{0}$ a finite number of times if necessary. Let us take $p \le n$ and $p' \le n$ such that  $(\mathcal{U}'_{1})^{p } \cdot U =U \cdot  (\mathcal{U}'_{1})^{p'}$ and $M_{p'} \in V^{p'}$. Then :   $$ U^{j} \cdot (\mathcal{U}''_{x})^{p} \cdot U(I_{2}+M_{p'}) = U^{j} \cdot U \cdot (\mathcal{U}''_{x})^{p'} \cdot (I_{2}+M_{p'}) $$ $$= U^{j+1} \cdot (\mathcal{U}''_{x})^{p'} \cdot (I_{2}+M_{p'}) \subset U^{j+1} \cdot \mathcal{U}_{x}  $$ This implies that for every $M_{0} \in (x \cdot  V^{0})$ and for every $M_{p'} \in (x \cdot V^{p'})$, we have  $U^{j}MU(I_{2}+M_{0}) \in  U^{j+1} \cdot (\mathcal{U}''_{x})^{p}$ and $ U^{j}MU(I_{2}+M_{0})U(I_{2}+M_{p'}) \in U^{j+2} \cdot \mathcal{U}_{x} $, which concludes the proof of the result.  \qedhere

\end{proof}

Let us point out the following obvious result for later reference. Remind that $\mathcal{N}(u)$ was defined in Proposition \ref{r42} and $\mathcal{U}''_{x}$ comes from Proposition \ref{r45}.

\begin{pr} \label{r53}
 For every $u \in (\mathbb{C}^{2})^{4}$, there exists a number $x(u)>0$ such that for every $0< x< x(u)$, for every $M  \in  ( \mathcal{U}_{x} \cup \mathcal{U}''_{x})$, then $M \cdot \overline{u}$ belongs to $\mathcal{N}(u)$.

\end{pr}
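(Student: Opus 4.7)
The plan is to exploit continuity of the $\mathrm{GL}_2(\mathbb{C})$-action on $\mathbb{P}(\mathbb{R}^8)$ and the fact that $\mathcal{U}''_x$ shrinks to $\{I_2\}$ as $x \to 0$.

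First I would recall from the construction in the proof of Proposition \ref{r45} that $\mathcal{U}''_x$ was defined as the image of $\mathcal{U}''_1 \subset \mathrm{Mat}_2(\mathbb{C})$ under the homothety of center $I_2$ and ratio $x$. Since $\mathcal{U}''_1$ is a bounded set (a finite union of balls $B(I_2 + X_i, \tfrac{1}{10} r)$), its image under this homothety converges to the singleton $\{I_2\}$ as $x \to 0$ in the sense that for any neighborhood $\mathcal{V}$ of $I_2$ in $\mathrm{GL}_2(\mathbb{C})$, there is some $x_0 > 0$ with $\mathcal{U}''_x \subset \mathcal{V}$ whenever $x < x_0$.

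Next, since the action of $I_2$ on $\overline{u} \in \mathbb{P}(\mathbb{R}^8)$ is trivial, we have $I_2 \cdot \overline{u} = \overline{u} \in \mathcal{N}(u)$. The map $\mathrm{GL}_2(\mathbb{C}) \to \mathbb{P}(\mathbb{R}^8)$, $M \mapsto M \cdot \overline{u}$, is continuous, so there exists an open neighborhood $\mathcal{V}$ of $I_2$ in $\mathrm{GL}_2(\mathbb{C})$ such that $M \cdot \overline{u} \in \mathcal{N}(u)$ for all $M \in \mathcal{V}$.

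Combining these two observations, I would choose $x(u) > 0$ small enough that $\mathcal{U}''_x \subset \mathcal{V}$ for all $0 < x < x(u)$. Then for every such $x$ and every $M \in \mathcal{U}''_x$, we have $M \cdot \overline{u} \in \mathcal{N}(u)$, as required. There is no real obstacle here: the statement is a direct continuity argument. The only mildly non-trivial point is verifying that $\mathcal{U}''_x$ does actually contract to $\{I_2\}$, but this is immediate from its definition as a homothetic image.
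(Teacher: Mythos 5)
Your argument is correct and is exactly the continuity reasoning the paper has in mind: the paper states Proposition \ref{r53} without proof as an ``obvious result'', since $\mathcal{U}''_{x}$ is the image of the bounded set $\mathcal{U}''_{1}$ under the homothety of center $I_{2}$ and ratio $x$, hence contracts to $\{I_{2}\}$, while $M \mapsto M \cdot \overline{u}$ is continuous and $I_{2}\cdot\overline{u}=\overline{u}\in\mathcal{N}(u)$. Nothing further is needed.
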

\subsection{Quasi-linear model}
Here we slightly perturb the linear maps we used before but we show we can keep results on persistent intersections. Let us recall that the integer $n$ was defined in Proposition \ref{r45}. Let us remind that  $\mathcal{M} \subset \mathrm{Mat}_{2}(\mathbb{C})$ is a finite subgroup.

\begin{df} \label{type} 
Let $f$ be a linear map defined on an open subset $\mathcal{V}$ of $\mathbb{C}^{2}$. We say that $f$ is linear of type $(x,p)$ for any $0 \le p \le n$ if $f$ can be written $f = \frac{1}{a}(A+h)$ with $a \in \mathbb{C}^{*}$, $A \in \mathcal{M}$ and $h \in x \cdot V^{p}$ (where $V^{p}$ was defined in Proposition \ref{r45}). The modulus $|a|$ is called the contraction factor of $f$.
\newline \newline
Let $f$ be a smooth map defined on an open subset $\mathcal{V}$ of $\mathbb{C}^{2}$. We say that $f$ is quasi-linear of type $(x,p)$ if the differential $Df_{o}$ is linear of type $(x,p)$ for every $o \in \mathcal{V}$, this is : $f = \frac{1}{a}(A+\tilde{h}) $ with $\tilde{h}$ smooth and $D\tilde{h}_{o} \in x \cdot V^{p}$ for every $o \in \mathcal{V}$ ($A$ and $a$ depend only on $f$ but not on $o$).  \end{df}
The following can be seen as a consequence of Proposition \ref{r45} in the quasi-linear setting. Remember that $x(u)>0$ was defined in Proposition \ref{r53}.
\begin{pr} \label{r51}

Let $\mathcal{M} \subset \mathrm{Mat}_{2}(\mathbb{C})$ be a finite subgroup of unitary matrices. Reducing $x(u)$ if necessary, for every grid of balls $G  = (u,o,N,r)$, for every quasi-linear map $f$ of type $(x,p)$ such that $x<x(u)$ with $ 2  |a|  \cdot  \mathrm{size}(G) \cdot ||f||_{C^{2}} < \frac{r}{2}$ and such that $G$ is included in the domain of $f$, there is a grid of balls $G'  = (u',o',N,r')$ included inside $f(G)$ with $\overline{u'} = (Df)_{o} \cdot \overline{u} $ and :  $$r' = r -    2|a|  \cdot  \mathrm{size}(G) \cdot ||f||_{C^{2}}     $$

\end{pr}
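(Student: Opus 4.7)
The plan is to set $o' := f(o)$, $u'_i := (Df)_o u_i$ (so that the required relation $\overline{u'} = (Df)_o \cdot \overline{u}$ holds tautologically), and $r' := r - 2|a_f| \cdot \text{size}(G) \cdot \|f\|_{C^2}$, and then to verify that the resulting $G' = (u',o',N,r')$ lies inside $f(G)$. Writing $v_\alpha := o + \sum_i \alpha_i u_i$ and $v'_\alpha := o' + \sum_i \alpha_i u'_i = f(o) + (Df)_o(v_\alpha - o)$ for each multi-index $\alpha \in \{-N,\dots,N\}^{4}$, it suffices to show
\[
B\bigl(v'_\alpha,\, r' \min_i \|u'_i\|\bigr) \ \subset\ f\bigl(B(v_\alpha,\, r \min_i \|u_i\|)\bigr) \qquad \text{for every } \alpha.
\]

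First I would estimate the nonlinear drift between $v'_\alpha$ and $f(v_\alpha)$. Since $\tilde f$ is affine one has $D^2 f = a_f^{-1} D^2 \varepsilon_1$, hence $\|D^2 f\|_\infty \le \|f\|_{C^2}$. A second-order Taylor expansion at $o$, combined with the bound $\|v_\alpha - o\| \le 4N \max_i \|u_i\| = 2\,\text{size}(G)$, yields
\[
\|f(v_\alpha) - v'_\alpha\| \ \le\ \tfrac{1}{2}\|D^2 f\|_\infty \cdot \|v_\alpha - o\|^2 \ \le\ 2\,\|f\|_{C^2}\,\text{size}(G)^{2},
\]
so the drift is uniformly controlled across the grid.

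Next I would estimate the image of a single ball of $G$. Because $A_f \in \mathcal{M}$ is unitary and $h \in x \cdot V^p$ with $x < x(u)$ is small, the derivative $(Df)_o = a_f^{-1}(A_f + h + (D\varepsilon_1)_o)$ has both operator norm and minimum singular value close to $1/|a_f|$, so $\min_i \|u'_i\|$ is close to $\min_i \|u_i\| / |a_f|$, and the affine model $L(x) := f(o) + (Df)_o(x-o)$ sends $B(v_\alpha,\, r \min_i \|u_i\|)$ onto an ellipsoid that contains a ball of radius essentially $r \min_i \|u'_i\|$ around $v'_\alpha$. The hypothesis $\|\varepsilon_1\|_{C^2} < \tfrac{1}{1000}\|h\|$ makes $f$ very $C^1$-close to $L$ on this small ball, so an inverse-function-theorem argument (solving $f(y) = z$ by the contraction $y \mapsto y + (Df)_o^{-1}(z - f(y))$) shows that $f(B(v_\alpha,\, r \min_i \|u_i\|))$ contains a ball around $f(v_\alpha)$ of radius essentially $r \min_i \|u'_i\|$, and then shifting the center from $f(v_\alpha)$ back to $v'_\alpha$ absorbs the drift of the previous step into a loss in $r$.

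The main obstacle is combining these two estimates quantitatively so that the prescribed reduction $r - r' = 2|a_f| \cdot \text{size}(G) \cdot \|f\|_{C^2}$ is enough to absorb the drift once it has been scaled by $|a_f|$ (the factor converting a drift in the image into a loss in the $r$-parameter of $G'$, via $\min_i \|u'_i\| \approx \min_i \|u_i\|/|a_f|$). The hypothesis $2|a_f| \cdot \text{size}(G) \cdot \|f\|_{C^2} < r/2$ guarantees $r' > r/2 > 0$, so $G'$ is a genuine grid of balls, and the strict inclusion $G' \subset f(G)$ then follows ball-by-ball from the previous paragraph.
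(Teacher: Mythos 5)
Your construction is the same as the paper's (take $o'=f(o)$, $u'_{i}=(Df)_{o}u_{i}$, and the prescribed $r'$), and your ingredients --- the Taylor bound on the drift, and the fact that unitarity of $A_{f}$ plus the smallness of $h$ and of $\varepsilon_{1}$ make each $f(B(v_{\alpha},r\min_{i}\|u_{i}\|))$ contain a ball of radius close to $r\min_{i}\|u'_{i}\|$ about $f(v_{\alpha})$ --- are correct as far as they go. But the step you yourself call ``the main obstacle'' is never carried out, and with the numbers you have written down it does not close. Your drift bound is $\|f(v_{\alpha})-v'_{\alpha}\|\le 2\,\|f\|_{C^{2}}\,\mathrm{size}(G)^{2}$; to recenter the ball of $G'$ from $f(v_{\alpha})$ to the exact lattice point $v'_{\alpha}$ you must pay this drift measured in units of $\min_{i}\|u'_{i}\|\approx\min_{i}\|u_{i}\|/|a_{f}|$, i.e.\ a loss in relative size of order
$$\frac{2\,\|f\|_{C^{2}}\,\mathrm{size}(G)^{2}}{\min_{i}\|u_{i}\|/|a_{f}|}\;=\;2\,|a_{f}|\,\mathrm{size}(G)\,\|f\|_{C^{2}}\cdot\frac{\mathrm{size}(G)}{\min_{i}\|u_{i}\|},$$
and since $\mathrm{size}(G)/\min_{i}\|u_{i}\|=2N\max_{i}\|u_{i}\|/\min_{i}\|u_{i}\|\ge 2N$, this exceeds your allotted reduction $r-r'=2|a_{f}|\,\mathrm{size}(G)\,\|f\|_{C^{2}}$ by a factor of order $N$ (the outer balls of the grid are the worst case). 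So the concluding sentence ``the strict inclusion $G'\subset f(G)$ then follows ball-by-ball'' is a non sequitur given your own estimates; the hypothesis $2|a_{f}|\,\mathrm{size}(G)\,\|f\|_{C^{2}}<r/2$ only guarantees $r'>0$, it does not absorb the recentering.

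Note how the paper accounts differently: it does not introduce the second-order displacement of the centers at all, but compares $f$ with its affine model at the level of the differential --- by Taylor the additive deviation of $Df$ over the hull is at most $\mathrm{size}(G)\cdot\|f\|_{C^{2}}$ --- and charges this against the least contraction $\tfrac{1}{2|a_{f}|}$, which yields exactly the stated loss $2|a_{f}|\,\mathrm{size}(G)\,\|f\|_{C^{2}}$; the bookkeeping there is done per ball relative to the image of its center rather than relative to the exact affine lattice point. If you insist on placing the centers of $G'$ at the points $o'+\sum_{i}\alpha_{i}u'_{i}$, as the definition of a grid of balls requires, you must either produce an argument showing the quadratic drift can be absorbed (which your budget does not allow) or follow the paper's normalized-$C^{1}$ accounting and explain why the resulting family of balls still has the exact lattice structure. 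As it stands, the decisive quantitative verification is missing, so the proposal has a genuine gap.
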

\begin{proof}
We just have to take $o' = f(o)$, $\overline{u'} = (Df)_{o} \cdot \overline{u}$ with $Df_{o}$ linear of type $(x,p)$. Remind that by definition, the size of $G$ is $\text{size}(G) =2N \cdot  \max_{1 \le i \le 4} ||u_{i}||$. When $||f||_{C^{2}} = 0$, the image of $G$ under $f$ is a grid of balls $G'$ of same relative size $r' = r$, each ball of $G'$ is the image of a ball of $G$ under $f = \frac{1}{a}(A+h)$. Reducing $x(u)>0$ (independently of $f$) if necessary, we have that the radius of a ball of $G'$ is between $\frac{1}{2|a|}$ and $\frac{2}{|a|}$ times the radius of a ball of $G$. \newline \newline If $||f||_{C^{2}} \neq 0$, the image of each ball of $G$ under $f$ still contains a ball of $G'$ but this time by the Taylor formula there is an additive term smaller than $\text{size}(G) \cdot ||f||_{C^{2}}$ in the differential of $f$. Then the relative size $r'$ is such that : $$r' \ge r - \frac{\text{size}(G) \cdot ||f||_{C^{2}}}{\frac{1}{2|a|}} = r- 2|a| \cdot \text{size}(G) \cdot ||f||_{C^{2}}$$  \qedhere

\end{proof}

\subsection{Intersecting a curve and the limit set of an IFS in $\mathbb{C}^{2}$}

In this subsection, we give an abstract condition ensuring the existence of an intersection (robust by construction) between a holomorphic curve in $\mathbb{C}^{2}$ and the limit set of an IFS. This will be the model for robust bifurcations near Latt\`es maps. Remind that $n$ was defined in Proposition \ref{r45}, $N$, $w,\theta$ in Proposition \ref{r42}. Remind that the middle part and the $\frac{3}{4}$-part of a ball were defined in Definition \ref{dfpp}, the middle part and the hull of a grid of balls were defined in Definition \ref{dfp}. In the following, for a holomorphic map $\mathcal{G}$ defined on a (closed) ball $\mathfrak{B} \subset \mathbb{C}^{2}$, we will denote $|| \mathcal{G} ||_{C^{2}} = \max_{\mathfrak{B}} || D^{2}\mathcal{G} ||$.

\begin{pr} \label{r99999} Let $(\mathcal{G}_{1},...,\mathcal{G}_{q})$ be a IFS given by $q$ maps defined on a ball $\mathfrak{B} \subset \mathbb{C}^{2}$ of radius $R>0$ satisfying the following properties : \begin{enumerate} \item $\bigcup_{ 1 \le j \le q}  \mathcal{G}_{j}  (\mathfrak{B})$ contains a grid of balls $G^{1}= (u^{1},o^{1},n_{G},r^{1})$ with $q = (2n_{G}+1)^{4}$ such that each $\mathcal{G}_{j}  (\mathfrak{B})$ contains a ball of $G^{1}$ \item the contraction factor of the IFS $(\mathcal{G}_{1},...,\mathcal{G}_{q})$ is $|a| \ge 2$ \item  there exist $(n+1)$ balls $\mathfrak{B}_{0},\mathfrak{B}_{1},...,\mathfrak{B}_{n}  \subset \mathfrak{B}$ of radius larger than $\nu \cdot R$ (with $0<\nu<1$), such that the $\frac{3}{4}$-parts of $\mathfrak{B}_{0},\mathfrak{B}_{1},...,\mathfrak{B}_{n}$ are included in the hull of $G^{1}$, and satisfying the following property :  for each $1 \le j \le q$ such that $\mathcal{G}_{j}(\mathfrak{B}) \subset \mathfrak{B}_{p}$, $\mathcal{G}_{j} = \frac{1}{a}(A+\tilde{h}_{j})$ is quasi-linear of type $(x,p)$ with $x<x(u^{1})$ and $a$,$A$ do not depend on $j$. Moreover, $\bigcup_{ 1 \le j \le q}  \mathcal{G}_{j}  (\mathfrak{B}_{p})$ contains a grid of balls $\Gamma^{1}_{p}= (u^{1},o^{1}_{p},n_{G},s^{1})$ for each $0 \le p \le n$ with $s^{1} \ge \nu \cdot r^{1}$
\item  $n_{G}>\frac{10}{\nu} \cdot N(\frac{\nu \cdot r^{1}}{10})$  \item $|a| \cdot R \cdot \max_{1 \le j \le q} (|| \mathcal{G}_{j} ||_{C^{2}}   ) < \frac{\nu \cdot r^{1}}{100}$
 \end{enumerate}  Let $\mathfrak{C}$ be a $(\theta,w)$-quasi-line of direction in $C^{w,2\theta}$ such that $\mathfrak{C}$ intersects the middle part of $G^{1}$.  \newline \newline Then $\mathfrak{C}$ intersects the limit set of the IFS $(\mathcal{G}_{1},...,\mathcal{G}_{q}) $.
\end{pr}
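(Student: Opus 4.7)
The plan is to construct inductively, for each $k\ge 1$, an index $j_k\in\{1,\dots,q\}$, a grid of balls $G^k$, a drift matrix $M_k\in\mathrm{GL}_2(\mathbb{C})$ and a type index $p_k\in\{0,\dots,n\}$, satisfying the following invariants. Writing $\Phi_k:=\mathcal{G}_{j_1}\circ\cdots\circ\mathcal{G}_{j_k}$ with $\Phi_0:=\mathrm{id}$, I require: (i) $G^k\subset\Phi_{k-1}\bigl(\bigcup_{j}\mathcal{G}_{j}(\mathfrak{B}_{p_k})\bigr)$; (ii) the quasi-line $\mathfrak{C}$ meets the middle part of at least one ball of $G^k$; (iii) the relative radius $r^k$ of $G^k$ satisfies $r^k\ge\nu r^1/10$; (iv) the direction $\overline{u^k}$ lies in the admissible union of neighborhoods appearing in Corollary~\ref{r55}, and the drift $M_k$ lies in $\mathcal{U}_x$ or $\mathcal{U}'_x$ (up to a factor $U^k$) according to the parity of $k$. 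Granted these, the nested compacts $\Phi_k(\mathfrak{B})$ have diameters bounded by a constant multiple of $|a|^{-k}\to 0$ by hypothesis~(2), so their intersection is a single point $x$ lying in the limit set of the IFS. Since the nested nonempty compacts $\mathfrak{C}\cap\Phi_k(\mathfrak{B})$ contain the intersection points witnessed at each step, the limit point $x$ lies in $\mathfrak{C}$, which yields the conclusion.

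For the base case $k=1$, I use $G^1$ from hypothesis~(1), $M_0=I_2\in\mathcal{U}_x$ and $p_1=0$; property~(ii) holds by the hypothesis on $\mathfrak{C}$. For the inductive step from $k$ to $k+1$, properties~(iii)--(iv) together with hypothesis~(4) put us in position to apply Corollary~\ref{r55} to $\mathfrak{C}$ and $G^k$, yielding a ball whose middle part meets $\mathfrak{C}$. Since this ball is contained in some cell $\Phi_{k-1}\circ\mathcal{G}_{j_k}(\mathfrak{B}_{p_k})$, an index $j_k$ is selected with $\mathcal{G}_{j_k}(\mathfrak{B})\subset\mathfrak{B}_{p_k}$, and by hypothesis~(3) the map $\mathcal{G}_{j_k}$ is quasi-linear of type $(x,p_k)$. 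I then update the drift $M_k$ via the linear part of $\mathcal{G}_{j_k}$ and apply Proposition~\ref{r45}: clause~(i) after odd steps prescribes $p_{k+1}=0$ and places $M_{k+1}$ in the correct $\mathcal{U}'_x$-component, while clause~(ii) after even steps prescribes the matching $p_{k+1}=p'$ so that $M_{k+1}$ returns to $\mathcal{U}_x$. To build $G^{k+1}$, I start from the grid $\Gamma^1_{p_{k+1}}$ of direction $\overline{u^1}$ given by hypothesis~(3) and iteratively apply Proposition~\ref{r51} to push it through $\mathcal{G}_{j_1},\dots,\mathcal{G}_{j_k}$. The result sits inside $\Phi_k\bigl(\bigcup_{j}\mathcal{G}_{j}(\mathfrak{B}_{p_{k+1}})\bigr)$ with direction $D\Phi_k(o)\cdot\overline{u^1}$, which Proposition~\ref{r53} combined with the correction principle places in the admissible neighborhood.

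Two technical issues drive the argument. First, I need to prevent collapse of the relative radius: each application of Proposition~\ref{r51} costs at most $2|a|\cdot\mathrm{size}(\text{current grid})\cdot\|\mathcal{G}_{j_i}\|_{C^2}$, and since grid sizes decay geometrically as $R/|a|^{i-1}$ by hypothesis~(2) while the $C^2$-norms are uniformly small by hypothesis~(5), the total loss summed over all steps forms a geometric series bounded by a small constant multiple of $\nu r^1$. This keeps $r^{k+1}\ge\nu r^1/10$ and, combined with hypothesis~(4), maintains the applicability of Corollary~\ref{r55} at every step. Second, I must verify that the middle-part intersection from step $k$ persists as a middle-part intersection for $G^{k+1}$: the intersection point lies in $\Phi_k(\mathfrak{B}_{p_k})$ whereas $G^{k+1}$ is built inside $\Phi_k\bigl(\bigcup_{j}\mathcal{G}_{j}(\mathfrak{B}_{p_{k+1}})\bigr)$, so the two live over different sub-balls of $\mathfrak{B}$. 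The hypotheses that the $\tfrac{3}{4}$-parts of every $\mathfrak{B}_p$ sit inside the hull of $G^1$ and that each $\mathfrak{B}_p$ has radius $\ge\nu R$ ensure that after pushing forward by $\Phi_k$ the two sub-regions remain comparable in size and relative position, so the intersection point falls well within the middle part of the hull of $G^{k+1}$. This geometric bookkeeping, combined with the control of the grid's direction through the correction principle, is the principal technical obstacle.
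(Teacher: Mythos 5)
Your outline does track the paper's proof: nested grids pushed forward by the inverse branches (Proposition \ref{r51}), an application of Corollary \ref{r55} at each step, drift control through Propositions \ref{r45} and \ref{r53}, the geometric series from hypotheses (2) and (5) to keep $r^k,s^k\ge \nu r^1/10$, and the conclusion by nested compacts. However, two of the steps you rely on do not go through as you state them. First, the parity-driven correction schedule is inconsistent with Proposition \ref{r45} as stated. Clause (i) sends a drift in $\mathcal{U}_x$ into $\mathcal{U}'_x$ after one type-$0$ composition, but clause (ii) returns the drift to $\mathcal{U}_x$ only after composing a type-$0$ map \emph{and then} a type-$p'$ map, the intermediate state lying in $(\mathcal{U}''_x)^{p}$ --- a set that never appears in your invariant (iv). Moreover the admissible $p'$ depends on which component $(\mathcal{U}'_x)^{p}$ the accumulated differential actually occupies, so the schedule cannot be fixed in advance by the parity of $k$; the bookkeeping has to be state-driven, as in the paper's Cases 1--3 (keep composing type-$0$ maps while the drift stays in $\mathcal{U}_x$, and when it exits into some $(\mathcal{U}'_x)^{p}$ perform the two-stage correction through $(\mathcal{U}''_x)^{p}$ before returning to $\mathcal{U}_x$). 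With a strict period-two alternation your invariant (iv) cannot be propagated from the cited statement.

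The second and more serious gap is the middle-part propagation, which you rightly single out as the principal obstacle but then settle by assertion. The intersection point produced at step $k$ lies in the middle part of a single ball of $\Gamma^{k}_{p_k}$, hence inside $\Phi_k(\mathfrak{B}_{p_k})$; hypothesis (3) only places the $\frac{3}{4}$-part of $\mathfrak{B}_{p_k}$ inside the hull of $G^1$, so $\mathfrak{B}_{p_k}$ may sit near the edge of that hull, and after pushing forward the point need not lie anywhere near the middle part of the full grid $G^{k+1}$ --- which is what an application of Corollary \ref{r55} to $G^{k+1}$ would require. The paper's mechanism is different: inside the ball met at step $k$ one extracts a sub-grid $\Gamma'$ of the next-level grid, \emph{recentered} so that the intersection point lies in the middle part of $\Gamma'$, and one checks that $\Gamma'$ still contains of the order of $(\frac{\nu n_G}{10})^4$ balls; this counting is exactly where $n_G>\frac{10}{\nu}N(\frac{\nu r^1}{10})$, the relative size $\nu$ of the $\mathfrak{B}_p$, and the comparability of $|a|$ with $n_G$ implicit in hypothesis (1) are consumed, and Corollary \ref{r55} is then applied to $\Gamma'$, not to the full grid. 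Without this recentering and counting step the induction does not close, so as written your argument has a genuine gap at precisely the point you identified.
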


\begin{df}
We say that $(\mathcal{G}_{1},...,\mathcal{G}_{q})$ is a correcting IFS when the conditions 1, 2, 3, 4 and 5 are satisfied. 
\end{df}

Proposition \ref{r99999} will be the immediate consequence of the following lemma : 
 \begin{lem}
There exist $(n+2)$ sequences of grids $(G^{j})_{j \ge 1}  = (u^{j},o^{j},n_{G},r^{j})_{j \ge 1}$ and $(\Gamma^{j}_{p})_{j \ge 1} = (u^{j},o^{j}_{p},n_{G},s^{j} )_{j \ge 1}  $ with  $0 \le p \le n$ such that we have the following properties : \begin{enumerate}
 \item For every $j>1$, $G^{j}$ is included inside a ball of $G^{j-1}$ and for every $j > 1$, there are $i_{1},...,i_{j-1}  \le q$ such that : $G^{j} \subset (\mathcal{G}_{i_{1}} \circ ... \circ \mathcal{G}_{i_{j-1}})(G^{1})$ \item For every $j > 1$, $0 \le p \le n$ : $\Gamma^{j}_{p} \subset (  \mathcal{G}_{i_{1}} \circ ... \circ \mathcal{G}_{i_{j-1}})(\Gamma^{1}_{p})$ \item For every $j \ge1$, $D(\mathcal{G}_{i_{1}} \circ ... \circ \mathcal{G}_{i_{j-1}})_{o^{1}}  \in A^{j-1} \cdot (\mathcal{U}_{x} \cup \mathcal{U}''_{x})$ and for $j \ge 2$ : $$r^{j} \ge r^{1}- 2|a| \cdot R \cdot \max_{1 \le j \le q} (|| \mathcal{G}_{j}  ||_{C^{2}}   )  \sum^{j-2}_{ l \ge 0} \frac{1}{|a|^{l}}     \text{ and } s^{j} \ge s^{1}- 2|a| \cdot  R \cdot  \max_{1 \le j \le q} (|| \mathcal{G}_{j} ||_{C^{2}}   )  \sum_{ l \ge 0}^{j-2} \frac{1}{|a|^{l}}     $$ 
 \item For every $j > 1$, there exists $1 \le p_{j} \le n$ such that the quasi-line $\mathfrak{C}$ intersects the middle part of a ball of $\Gamma^{j}_{p_{j}}$  
 \end{enumerate}
 \end{lem}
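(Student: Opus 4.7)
My plan is to prove the lemma by induction on $j \ge 1$, constructing in parallel the indices $i_{k}$ defining $F_{j-1} := \mathcal{G}_{i_{1}} \circ \cdots \circ \mathcal{G}_{i_{j-1}}$, the grids $G^{j}, \Gamma^{j}_{p}$, and the distinguished indices $p_{j}$. The grids at step $j$ will be obtained by applying Proposition \ref{r51} to $\mathcal{G}_{i_{j-1}}$ and the grids at step $j-1$; this automatically yields properties 1 and 2 and the direction identity $\overline{u^{j}} = (DF_{j-1})_{o^{1}} \cdot \overline{u^{1}}$. The base case $j=1$ is immediate since $F_{0} = \mathrm{Id}$ gives $DF_{0} = I_{2} \in A^{0} \cdot \mathcal{U}''_{x}$ and property 4 is vacuous; for the passage $j=1 \to j=2$ I will play the role of $\Gamma^{1}_{p_{1}}$ with the grid $G^{1}$ furnished by the hypothesis.

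The inductive step $j \to j+1$ splits into three substeps. First (intersection), I will apply Corollary \ref{r55} to $\Gamma^{j}_{p_{j}}$. Property 3 together with Proposition \ref{r53} places the direction $\overline{u^{j}}$ inside $A^{j-1} \cdot \mathcal{N}(u^{1})$, while the lower bound on $s^{j}$ from property 3, combined with hypothesis 5 and the fact $\sum_{l \ge 0} |a|^{-l} \le 2$, gives $s^{j} \ge \tfrac{24}{25} \nu r^{1} > \tfrac{\nu r^{1}}{10}$, so that $n_{G} > N(s^{j})$ by hypothesis 4. Corollary \ref{r55} then yields a ball $B$ of $\Gamma^{j}_{p_{j}}$ whose middle part is met by $\mathfrak{C}$.

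Second (choice of $i_{j}$), the ball $B$ corresponds under $F_{j-1}$ to a ball $B' \subset \Gamma^{1}_{p_{j}}$ contained in some $\mathcal{G}_{k}(\mathfrak{B}_{p_{j}})$. Among the admissible indices $k$, I will select $i_{j}$ so that $\mathcal{G}_{i_{j}}$ has the type $(x, p_{j+1})$ prescribed by the correction principle (Proposition \ref{r45}) applied to the current state of $(DF_{j-1})_{o^{1}}$. Concretely, the cycle $\mathcal{U}_{x} \to \mathcal{U}'_{x} \to \mathcal{U}''_{x} \to \mathcal{U}_{x}$ dictated by Proposition \ref{r45}(i)-(ii)—two drift steps (type $0$ maps) followed by a correction step (a type $p'$ map matching the current $(\mathcal{U}'_{x})^{p}$ component)—keeps $(DF_{j})_{o^{1}}$ in $A^{j} \cdot \mathcal{U}''_{x}$. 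The hypothesis that $\Gamma^{1}_{p_{j}}$ is covered by $\bigcup_{k} \mathcal{G}_{k}(\mathfrak{B}_{p_{j}})$ with maps of every type $(x,p)$ available gives the flexibility to simultaneously fulfil both the geometric constraint $B' \subset \mathcal{G}_{i_{j}}(\mathfrak{B}_{p_{j}})$ and the type constraint.

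Third (propagation), I will set $F_{j} := F_{j-1} \circ \mathcal{G}_{i_{j}}$ and define $G^{j+1}, \Gamma^{j+1}_{p}$ through Proposition \ref{r51}. Properties 1 and 2 are immediate; for property 3, the direction bound follows from Proposition \ref{r45}, while the size bound follows from iterating Proposition \ref{r51}, each step $k$ contributing an additive error at most $2|a| R \max_{j} ||\mathcal{G}_{j}||_{C^{2}} \cdot |a|^{-(k-2)}$, which sums to the geometric series in the statement. For property 4, I will take $p_{j+1}$ such that $\mathcal{G}_{i_{j}}(\mathfrak{B}) \subset \mathfrak{B}_{p_{j+1}}$; then $B \subset F_{j-1}(\mathcal{G}_{i_{j}}(\mathfrak{B}_{p_{j}})) = F_{j}(\mathfrak{B}_{p_{j}}) \subset F_{j-1}(\mathfrak{B}_{p_{j+1}})$, and the inclusion of the $\tfrac{3}{4}$-part of $\mathfrak{B}_{p_{j+1}}$ in the hull of $G^{1}$ (hypothesis 3 of the main proposition) positions $B$ inside the middle part of a ball of $\Gamma^{j+1}_{p_{j+1}}$. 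The main obstacle of the argument will be substep two: the whole point of the correcting IFS structure is precisely to allow a single index $i_{j}$ to simultaneously satisfy the geometric and the dynamical constraints imposed by the intersection and correction principles.
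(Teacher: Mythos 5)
Your skeleton (induction; intersection principle applied to the grids $\Gamma^{j}_{p_{j}}$; size control via Proposition \ref{r51}; drift bookkeeping via Proposition \ref{r45}) is the same as the paper's, but your substep two --- which you yourself single out as the crux --- does not work as stated. Once Corollary \ref{r55} produces a ball $B$ of $\Gamma^{j}_{p_{j}}$ met by $\mathfrak{C}$, the index $i_{j}$ is \emph{determined}, not chosen: the sets $\mathcal{G}_{k}(\mathfrak{B})$ are pairwise disjoint images of distinct inverse branches, so the ball $B'$ of $\Gamma^{1}_{p_{j}}$ with $B\subset(\mathcal{G}_{i_{1}}\circ\cdots\circ\mathcal{G}_{i_{j-1}})(B')$ lies in exactly one $\mathcal{G}_{k}(\mathfrak{B}_{p_{j}})$, and there is no family of ``admissible indices'' among which you may pick one of the prescribed type. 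Moreover $B'\subset\mathcal{G}_{k}(\mathfrak{B}_{p_{j}})$ carries no information about the type of $\mathcal{G}_{k}$: by hypothesis 3 the type is governed by whether $\mathcal{G}_{k}(\mathfrak{B})\subset\mathfrak{B}_{p}$, i.e.\ by where the image of the \emph{whole} ball $\mathfrak{B}$ lands, not by which sub-ball's image happens to contain $B'$. So the claim that the correcting-IFS hypotheses ``give the flexibility'' to match the geometric and dynamical constraints at the same step is exactly what remains unproved, and it is false in the form you use it.

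The actual mechanism is a forcing with a one-step delay, and it dictates a different rule for $p_{j}$ than yours. Since the intersected ball satisfies $B\subset(\mathcal{G}_{i_{1}}\circ\cdots\circ\mathcal{G}_{i_{j}})(\mathfrak{B}_{p_{j}})$, at the next stage one applies the intersection principle only to the sub-grid of $\Gamma^{j+1}_{p_{j+1}}$ contained in $B$ (this is what the $\tfrac{3}{4}$-part condition and the inequality $n_{G}>\tfrac{10}{\nu}N(\tfrac{\nu r^{1}}{10})$ are for); the new ball then pulls back under $\mathcal{G}_{i_{1}}\circ\cdots\circ\mathcal{G}_{i_{j}}$ into $\mathfrak{B}_{p_{j}}$, so the branch $\mathcal{G}_{i_{j+1}}$ composed at step $j+1$ has $\mathcal{G}_{i_{j+1}}(\mathfrak{B})\subset\mathfrak{B}_{p_{j}}$ and is therefore \emph{automatically} of type $p_{j}$. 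Consequently $p_{j}$ must be chosen one step ahead according to Proposition \ref{r45}: $p_{j}=0$ in the generic situation, and $p_{j}=p'$ (the index furnished by Proposition \ref{r45}(ii)) exactly when the accumulated differential lies in $A^{j}\cdot(\mathcal{U}'_{x})^{p}$, so that the correcting branch of type $p'$ is composed at the following step and brings the product back into $A^{j+1}\cdot\mathcal{U}_{x}$ (the paper's Cases 2 and 3). Your rule ``take $p_{j+1}$ such that $\mathcal{G}_{i_{j}}(\mathfrak{B})\subset\mathfrak{B}_{p_{j+1}}$'' is backwards: such a $p_{j+1}$ need not exist at all (the balls $\mathfrak{B}_{p}$ do not cover $\mathfrak{B}$), and even when it exists it is unrelated to the correction that Proposition \ref{r45} requires, so properties 3 and 4 of the lemma would not propagate. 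The same delayed-forcing issue already affects your initialization, where you substitute $G^{1}$ for $\Gamma^{1}_{p_{1}}$: one must start from $\Gamma^{1}_{0}$ to keep control of the type of the second composed branch.
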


\begin{proof} The proof of this lemma is based on an induction procedure. We begin by an initialisation called Case 0 where we pick the grids of balls at the first level $G^{1}$ and $\Gamma^{1}_{p}$ for $ 0 \le p \le n$. We intersect for the first time the quasi-line $\mathfrak{C}$ with a ball and we construct the grids at the second level. Case 0 is somewhat different from the rest of the demonstration because we do not not control the initial position of $\mathfrak{C}$. Then, Case 1 has to be thought as the most frequent situation : $\mathfrak{C}$ intersects a grid of balls whose geometry is good enough, and we can intersect $\mathfrak{C}$ with a new grid whose geometry is very close to the previous one. Then, it may happen a time when the geometry of this grid is too deformed. Then we apply a "correction" (Cases 2 and 3), which leads back to Case 1. 
\newline \newline  $\text{\underline{Case 0} : initialization}$ \newline \newline  By hypothesis, $\bigcup_{ 1 \le j \le q}  \mathcal{G}_{j}  (\mathfrak{B})$ contains a grid of balls $G^{1}= (u^{1},o^{1},n_{G},r^{1})$ and similarly $\bigcup_{ 1 \le j \le q}  \mathcal{G}_{j}  (\mathfrak{B}_{p})$ contains a grid of balls $\Gamma^{1}_{p}= (u^{1},o^{1}_{p},n_{G},s^{1})$ for each $0 \le p \le n$ with $s^{1} \ge \nu \cdot r^{1}$. So, for the first step $j = 1$, the $(n+2)$ grids of balls are already constructed. \newline \newline By Corollary \ref{r55}, $\mathfrak{C}$ intersects in its middle part a ball of $\Gamma^{1}_{0}$ : indeed, $\Gamma^{1}_{0}$ is a grid of balls such that $\overline{ u^{1}} \in \mathcal{N}(u^{1})$, we have $ s^{1} \ge \nu \cdot r^{1} > \frac{\nu \cdot r^{1}}{10}$ and $ n_{G}>\frac{10}{\nu} \cdot N(\frac{\nu \cdot r^{1}}{10}) >N(\frac{\nu \cdot r^{1}}{10})$ (beware that the matrix $A$ corresponds to the matrix denoted by $U$ in Corollary \ref{r55}). Then it intersects the ball $\mathcal{G}_{i_{1}}(\mathfrak{B})$ of $G^{1}$ which contains this ball of $\Gamma^{1}_{0}$. According to Proposition \ref{r51}, there exists a grid of balls $G^{2} =  (u^{2},o^{2},n_{G},r^{2})$ included in $\mathcal{G}_{i_{1}}(G^{1}) $. We have $\overline{u^{2}} \in A \cdot \mathcal{N}(u^{1})$ and $r^{2} \ge r^{1} - 2|a| \cdot \text{size}(G^{1}) \cdot \max_{1 \le j \le q} (|| \mathcal{G}_{j} ||_{C^{2}}   ) \ge r^{1} -  2|a| \cdot R \cdot  \max_{1 \le j \le q} (|| \mathcal{G}_{j} ||_{C^{2}}   )>\frac{\nu \cdot r^{1}}{10} $. Applying  Proposition \ref{r51} to the grids of balls $\Gamma^{1}_{p}$ ($0 \le p \le n$), there exist $(n+1)$ grids of balls $\Gamma^{2}_{p} =  (u^{2},o^{2}_{p},n_{G},s^{2})$ included in $\mathcal{G}_{i_{1}}(\Gamma^{1}_{p}) $ for $0 \le p \le n$. We have : $\overline{u^{2}} \in A \cdot \mathcal{N}(u^{1})$ and $s^{2} \ge s^{1} - 2|a| \cdot \text{size}(\Gamma^{1}_{p}) \cdot \max_{1 \le j \le q} (|| \mathcal{G}_{j} ||_{C^{2}}   ) \ge s^{1} -  2|a| \cdot R \cdot  \max_{1 \le j \le q} (|| \mathcal{G}_{j} ||_{C^{2}}   )>\frac{\nu \cdot r^{1}}{10} $.    \newline \newline  Let us now suppose by induction that the $(n+2)$ sequences of grids of balls satisfying (1),(2),(3) and (4) are constructed up to step $j$ with the additional properties that $\mathfrak{C}$ intersects in its middle part a ball of $\Gamma^{j-1}_{p_{j-1}}$ and that the following property is verified : 
\newline 
\newline 
(Q) For every $i$ such that $(\mathcal{G}_{i_{1}} \circ ... \circ \mathcal{G}_{i_{j-1}} \circ \mathcal{G}_{i})(G^{1}) \subset  \Gamma^{j-1}_{p_{j-1}}$, we have : $$D(\mathcal{G}_{i_{1}} \circ ... \circ \mathcal{G}_{i_{j-1}}  \circ \mathcal{G}_{i})_{o^{1}}  \in A^{j} \cdot (\mathcal{U}_{x} \cup \mathcal{U}''_{x})$$
Let us construct the grids of balls at the next step. The proof is inductive, at each step of the proof we are in one of the three cases we are going to discuss, which differ by two parameters. We have a quasi-line intersecting a grid of balls and we have to make a different choice to intersect a ball corresponding to one of the $(n+1)$ types of differentials we introduced earlier. Note that after Case 0, we will necessarily be in Case 1. \newline \newline
 $  \text{\underline{Case 1} : }   D(\mathcal{G}_{i_{1}} \circ ... \circ \mathcal{G}_{i_{j-1}} )_{o^{1}}    \in A^{j-1} \cdot \mathcal{U}_{x}  \text{   and    }p_{j-1} = 0 $ \newline \newline
By construction, $\mathfrak{C}$ intersects in its middle part a ball $B^{j-1}_{0}$ of the grid of balls $\Gamma^{j-1}_{0}$.   Since $\Gamma^{j}_{0}  = (u^{j},o^{j}_{0},n_{G},s^{j} )$ is a grid of balls such that $ \Gamma^{j}_{0}  \subset (  \mathcal{G}_{i_{1}} \circ ... \circ \mathcal{G}_{i_{j-1}})(\Gamma^{1}_{0})$ and $D(\mathcal{G}_{i_{1}} \circ ... \circ \mathcal{G}_{i_{j-1}} )_{o^{1}}  \in A^{j-1} \cdot \mathcal{U}_{x}$, we have according to Proposition \ref{r53} that $\overline{u^{j}} \in A^{j-1} \cdot \mathcal{N}(u^{1})$. The relative size of $\mathfrak{B}_{0}$ compared to $\mathfrak{B}$ is equal to $\nu$, the $\frac{3}{4}$-part of $\mathfrak{B}_{0}$ is included in the hull of $G^{1}$ and $n_{G}>\frac{10}{\nu} \cdot N(\frac{\nu \cdot r^{1}}{10})$. Then it is possible to take an union of balls of $\Gamma^{j}_{0}$ included in $B^{j-1}_{0}$ which form a grid of balls $\Gamma'$ of basis $u^{j}$, relative size $s^{j}$ and with $(\frac{1}{10}\nu \cdot n_{G})^{4}$ balls. By construction, we can take $\Gamma'$ such that $\mathfrak{C}$ intersects the middle part of $\Gamma'$. Since $\overline{u^{j}} \in A^{j-1} \cdot \mathcal{N}(u^{1})$, $s^{j}>\frac{\nu \cdot r^{1}}{10}$ and $\frac{1}{10}\nu \cdot n_{G} > N(\frac{\nu \cdot r^{1}}{10})$ we have according to Corollary \ref{r55} that $\mathfrak{C}$ intersects in its middle part a ball of $\Gamma^{j}_{0}$. This ball is included inside $ (\mathcal{G}_{i_{1}} \circ ... \circ \mathcal{G}_{i_{j}})(\mathfrak{B})$ with $\mathcal{G}_{i_{j}}$ quasi-linear of type 0. \newline \newline  In particular, $\mathfrak{C}$ intersects the hull of a new grid of balls $G^{j+1} \subset (\mathcal{G}_{i_{1}} \circ ... \circ \mathcal{G}_{i_{j}})(G^{1})$. According to Propositions \ref{r45}, \ref{r51} and Property (Q), $G^{j+1}$ is a grid of balls $G^{j+1}= (u^{j+1},o^{j+1},n_{G} ,r^{j+1})$ with  $D(\mathcal{G}_{i_{1}} \circ ... \circ \mathcal{G}_{i_{j}} )_{o^{1}}     \in A^{j} \cdot (\mathcal{U}_{x} \cup \mathcal{U}'_{x})$ and : 
 $$r^{j+1}  \ge r^{j}- 2|a| \cdot \text{size}(G^{j}) \cdot \max_{1 \le j \le q} (|| \mathcal{G}_{j}  ||_{C^{2}}   ) \ge r^{j}- 2|a| \cdot \frac{R}{|a|^{j-1}} \cdot \max_{1 \le j \le q} (|| \mathcal{G}_{j}  ||_{C^{2}}   ) $$ $$ r^{j+1} \ge r^{1}-   2|a| \cdot   R \cdot \max_{1 \le j \le q} (|| \mathcal{G}_{j}  ||_{C^{2}}   )  \sum_{ l \ge 1}^{j-1} \frac{1}{|a|^{l}}>\frac{\nu \cdot r^{1}}{10}    $$   
Still according to Propositions \ref{r45}, \ref{r51} and Property (Q), there exist $(n+1)$ grids of balls $\Gamma^{j+1}_{p}$ (for $0 \le p \le n$) included in $(\mathcal{G}_{i_{1}} \circ ... \circ \mathcal{G}_{i_{j}})(\Gamma^{1}_{p})$ such that : 
$$s^{j+1}  \ge s^{j}- 2|a|\cdot \text{size}(\Gamma^{j}_{p}) \cdot \max_{1 \le j \le q} (|| \mathcal{G}_{j}  ||_{C^{2}}   ) \ge s^{j}- 2|a| \cdot \frac{R}{|a|^{j-1}} \cdot \max_{1 \le j \le q} (|| \mathcal{G}_{j}  ||_{C^{2}}   ) $$ $$ s^{j+1} \ge s^{1}-   2|a| \cdot   R \cdot \max_{1 \le j \le q} (|| \mathcal{G}_{j}  ||_{C^{2}}   )  \sum_{ l \ge 1}^{j-1} \frac{1}{|a|^{l}}>\frac{\nu \cdot r^{1}}{10}    $$     
 The grids of balls $G^{j+1}$ and  $\Gamma^{j+1}_{p}$ (for $0 \le p \le n$) satisfy (1),(2),(3),(4). In particular, $\mathfrak{C}$ intersects in its middle part a ball of $\Gamma^{j}_{0}$.\newline \newline
 Since $D(\mathcal{G}_{i_{1}} \circ ... \circ \mathcal{G}_{i_{j}} )_{o^{1}}    \in A^{j} \cdot (\mathcal{U}_{x} \cup \mathcal{U}'_{x})$ and $p_{j} = 0$, by Proposition \ref{r45}, if  $(\mathcal{G}_{i_{1}} \circ ... \circ \mathcal{G}_{i_{j}} \circ \mathcal{G}_{i})(G^{1}) \subset  \Gamma^{j}_{p_{j}}$ for some $i$, then $(\mathcal{G}_{i_{1}} \circ ... \circ \mathcal{G}_{i_{j}} \circ \mathcal{G}_{i})(G^{1})$ contains a grid of balls $(u^{j,i},o^{j,i},n_{G},r^{j,i})_{j}$ such that $D(\mathcal{G}_{i_{1}} \circ ... \circ \mathcal{G}_{i_{j}} \circ \mathcal{G}_{i} )_{o^{1}}    \in A^{j+1} \cdot ( \mathcal{U}_{x} \cup  \mathcal{U}''_{x})$, this means that $(Q)$ is verified. \newline \newline Then, after Case 1 and according to Proposition \ref{r45}, only two cases can occur. If $D(\mathcal{G}_{i_{1}} \circ ... \circ \mathcal{G}_{i_{j}} )_{o^{1}}    \in A^{j} \cdot \mathcal{U}_{x}$ and we can apply Case 1 once again. If $D(\mathcal{G}_{i_{1}} \circ ... \circ \mathcal{G}_{i_{j}}  )_{o^{1}} \in A^{j} \cdot (   \mathcal{U}'_{x} - (\mathcal{U}'_{x} \cap  \mathcal{U}_{x})) $, there exists $1 \le p \le n$ such that $D(\mathcal{G}_{i_{1}} \circ ... \circ \mathcal{G}_{i_{j}})_{o^{1}}    \in A^{j} \cdot   (\mathcal{U}')^{p}$. In this case, we are going to "correct" the next grids in a procedure given by Cases 2 and 3. \newline \newline
 $  \text{\underline{Case 2:} }   D(\mathcal{G}_{i_{1}} \circ ... \circ \mathcal{G}_{i_{j-1}} )_{o^{1}}    \in A^{j-1} \cdot (\mathcal{U}'_{x})^{p}  \text{   and    }p_{j-1} = 0 $ \newline
\newline
By construction, $\mathfrak{C}$ intersects in its middle part a ball of the grid of balls $\Gamma^{j-1}_{0}$. We have according to Proposition \ref{r53}  that $\overline{u^{j}} \in A^{j-1} \cdot \mathcal{N}(u^{1})$. Then, using the same argument as in Case 1, we have according to Corollary \ref{r55} that $\mathfrak{C}$ intersects in its middle part a ball of $\Gamma^{j}_{p_{j}}$ included inside $ (\mathcal{G}_{i_{1}} \circ ... \circ \mathcal{G}_{i_{j-1}})(\mathfrak{B})$ where $\mathcal{G}_{i_{j-1}}$ is quasi-linear of type 0 and $p_{j} = p'$ is chosen according to Proposition \ref{r45}. In particular, $\mathfrak{C}$ intersects the hull of a new grid of balls $G^{j+1} \subset (\mathcal{G}_{i_{1}} \circ ... \circ \mathcal{G}_{i_{j}})(G^{1})$. According to Propositions \ref{r45} and \ref{r51}, $G^{j+1}$ is a grid of balls $G^{j+1}= (u^{j+1},o^{j+1},n_{G} ,r^{j+1})$ with $D(\mathcal{G}_{i_{1}} \circ ... \circ \mathcal{G}_{i_{j}}  )_{o^{1}}   \in A^{j} \cdot (\mathcal{U}''_{x})^{p}$, there exist $(n+1)$ grids of balls $\Gamma^{j+1}_{p}$ (for $0 \le p \le n$) included in $(\mathcal{G}_{i_{1}} \circ ... \circ \mathcal{G}_{i_{j}})(\Gamma^{1}_{p})$ and $r^{j+1}$, $s^{j+1} $ satisfy the inequalities of property 3. The grids of balls $G^{j+1}$ and  $\Gamma^{j+1}_{p}$ (for $0 \le p \le n$) satisfy (1),(2),(3),(4).\newline \newline
Since $|a|\cdot R \cdot \max_{1 \le j \le q} (|| \mathcal{G}_{j} ||_{C^{2}}   ) < \frac{\nu \cdot r^{1}}{100}$ we have for every $0 \le p \le n$, for every $j \ge 1$ the following bounds : $r^{j},s^{j}_{p}>\frac{\nu \cdot r^{1}}{10}$. Since $D(\mathcal{G}_{i_{1}} \circ ... \circ \mathcal{G}_{i_{j}}  )_{o^{1}}     \in A^{j} \cdot (\mathcal{U}''_{x})^{p}$ and $p_{j} = p'$ is chosen according to Proposition \ref{r45} (see Proposition \ref{r45} for the definition of $p'$), we have for every $i$ such that $(\mathcal{G}_{i_{1}} \circ ... \circ \mathcal{G}_{i_{j}} \circ \mathcal{G}_{i})(G^{1}) \subset  \Gamma^{j}_{p_{j}}$ that $(\mathcal{G}_{i_{1}} \circ ... \circ \mathcal{G}_{i_{j}} \circ \mathcal{G}_{i})(G^{1})$ contains a grid of balls $(u^{j,i},o^{j,i},n_{G},r^{j,i})_{j}$ with $D(\mathcal{G}_{i_{1}} \circ ... \circ \mathcal{G}_{i_{j}} \circ \mathcal{G}_{i} )_{o^{1}}   \in A^{j+1} \cdot  \mathcal{U}_{x}$, this means that $(Q)$ is verified.
\newline \newline 
After Case 2, it follows from Proposition \ref{r45} that necessarily the two conditions of the following Case 3 are satisfied. \newline \newline
 $  \text{\underline{Case 3}: }   D(\mathcal{G}_{i_{1}} \circ ... \circ \mathcal{G}_{i_{j-1}} )_{o^{1}}    \in A^{j-1} \cdot (\mathcal{U}''_{x})^{p} \text{   and    }p_{j-1} \neq 0 $ \newline \newline Induction shows that $p_{j-1} $ had been chosen to get special composition properties (see Case 2, beware that the number denoted here by $p_{j-1}$ corresponds to the number denoted by "$p_{j}$" in Case 2), let us pick $p_{j} = 0$. By construction, $\mathfrak{C}$ intersects in its middle part a ball of the grid of balls $\Gamma^{j-1}_{p_{j-1}}$. Once again : $\overline{u^{j}} \in A^{j-1} \cdot \mathcal{N}(u^{1})$ and we have according to Corollary \ref{r55} that $\mathfrak{C}$ intersects in its middle part the ball of $\Gamma^{j}_{0}$ included inside $ (\mathcal{G}_{i_{1}} \circ ... \circ \mathcal{G}_{i_{j}})(\mathfrak{B})$ with $\mathcal{G}_{i_{j}}$ quasi-linear of type $p_{j-1}$. In particular, $\mathfrak{C}$ intersects the hull of a new grid of balls $G^{j+1} \subset (\mathcal{G}_{i_{1}} \circ ... \circ \mathcal{G}_{i_{j}})(G^{1})$. Once again, we can construct grids of balls $G^{j+1}$ and  $\Gamma^{j+1}_{p}$ (for $0 \le p \le n$) which satisfy (1),(2),(3),(4) but this time with $D(\mathcal{G}_{i_{1}} \circ ... \circ \mathcal{G}_{i_{j}}  )_{o^{1}}   \in A^{j} \cdot \mathcal{U}_{x}$. In particular, $\mathfrak{C}$ intersects in its middle part a ball of $\Gamma^{j}_{p_{j}}$. Moreover, Proposition \ref{r45} still insures that $(Q)$ is verified. \newline \newline 
Since $D(\mathcal{G}_{i_{1}} \circ ... \circ \mathcal{G}_{i_{j}}  )_{o^{1}}   \in  A^{j} \cdot \mathcal{U}_{x}$, we are now in Case 1 once again.  \qedhere

\end{proof}
\begin{proof}[Proof of Proposition \ref{r99999}]
According to the previous lemma, for each $j \ge 1$, $\mathfrak{C}$ intersects $\Gamma^{j}_{p_{j}} \subset (  \mathcal{G}_{i_{1}} \circ ... \circ \mathcal{G}_{i_{j-1}})(\Gamma^{1}_{p_{j}})$. But $\Gamma^{1}_{p_{j}} \subset \bigcup_{ 1 \le j \le q}  \mathcal{G}_{j}  (\mathfrak{B}_{p_{j}}) \subset \bigcup_{ 1 \le j \le q}  \mathcal{G}_{j}  (\mathfrak{B}) $. This shows that for each $j \ge 1$, there exist $q \ge i_{1},...,i_{j} \ge 1$ such that $\mathfrak{C}$ intersects $ (  \mathcal{G}_{i_{1}} \circ ... \circ \mathcal{G}_{i_{j}})(\mathfrak{B})$. This implies that $\mathfrak{C}$ intersects the limit set of the IFS $(\mathcal{G}_{1},...,\mathcal{G}_{q})$.  \qedhere

\end{proof}

\section{Properties of Latt\`es maps} \subsection{Definitions}
\begin{df3}A Latt\`es map is a holomorphic endomorphisms of $\mathbb{P}^{2}(\mathbb{C})$ of degree $d \ge 2$ which is semi-conjugate to an affine map on the torus. For such a map, we have the following commutative diagram : 

$$\xymatrix{
    \mathbb{T} \ar[r]^{\mathcal{L}} \ar[d]_\Pi & \mathbb{T} \ar[d]^\Pi \\
    \mathbb{P}^{2}(\mathbb{C}) \ar[r]_L & \mathbb{P}^{2}(\mathbb{C})
  }$$ where $\mathbb{T}$ is a complex torus of dimension $2$, $\Pi$ is a ramified covering of the projective space $\mathbb{P}^{2}(\mathbb{C})$ by the torus $\mathbb{T}$ and $\mathcal{L}$ is an affine map. \end{df3} \begin{pr3} The periodic points of any Latt\`es map are dense in $\mathbb{P}^{2}(\mathbb{C})$. The Julia set of any Latt\`es map is equal to $\mathbb{P}^{2}(\mathbb{C})$.  \end{pr3}
  
  \begin{nt5}
  In the following, for every $\tau \in \mathbb{C}$ such that $\mathrm{Im}(\tau)>0$, we will denote $L(\tau)$ the lattice in $\mathbb{C}$ given by : $L(\tau) = \mathbb{Z}+\tau \cdot \mathbb{Z}$ and by $L^{2}(\tau)$ the associated product lattice $L^{2}(\tau) = L(\tau) \cdot \begin{pmatrix}    1 \\
   0 \\ \end{pmatrix} + L(\tau) \cdot \begin{pmatrix}    0 \\
        1 \\ \end{pmatrix}$. We also let $\xi = e^{i\frac{2 \pi}{6}}$.
  
  \end{nt5}
  The following proposition can be found in \cite{jp}.
  \begin{pr3} \label{r31}
  If an affine map on a torus $\mathbb{T}$ induces a Latt\`es map $L$ on $\mathbb{P}^{2}(\mathbb{C})$, then the torus $\mathbb{T}$ is of the form $\mathbb{C}^{2}/\Lambda$ where $\Lambda$ is one of the six following lattices and the projection $\Pi : \mathbb{T} \rightarrow \mathbb{P}^{2}(\mathbb{C})$ is given (in some affine chart for Cases 1,2,3,4) by the following formulas : 
  \begin{itemize}
  \item[Case 1 ] $\Lambda = L^{2}(\tau) , (x,y) \mapsto [\wp(x)+\wp(y) : \wp(x)\wp(y) : 1]$
  \item[Case 2 ] $\Lambda = L^{2}(\xi), (x,y) \mapsto [\wp'(x)+\wp'(y) : \wp'(x)\wp'(y) : 1]$
  \item[Case 3 ] $\Lambda = L^{2}(i), (x,y) \mapsto [\wp^{2}(x)+\wp^{2}(y) : \wp^{2}(x)\wp^{2}(y) : 1]$
   \item[Case 4 ] $\Lambda = L^{2}(\xi), (x,y) \mapsto  [(\wp')^{2}(x)+(\wp')^{2}(y) : (\wp')^{2}(x)(\wp')^{2}(y) : 1]  $
   \item[Case 5 ]$ \Lambda = L^{2}(i), (x,y) \mapsto    [( \wp(x)\wp(y)+e_{1}^{2})^{2} : (\wp(x)+\wp(y))^{2} : (\wp(x)\wp(y)-e_{1}^{2})^{2} ] $
    \item[Case 6 ] $\Lambda = L(\tau) \cdot \begin{pmatrix}    -1 \\
   1 \\ \end{pmatrix} + L(\tau) \cdot \begin{pmatrix}    \xi^{2} \\
   \xi \\ \end{pmatrix}, (x,y) \mapsto  [\wp'(x_{1})-\wp'(y_{1}) : \wp(x_{1})-\wp(y_{1}) : \wp'(x_{1})\wp(y_{1})-\wp(x_{1})\wp'(y_{1})] $ \end{itemize}

    where $e_{1} = \wp(\frac{1}{2})$ and $(x_{1},y_{1})$ is the function of $(x,y)$ given by  : 

$$x_{1}\begin{pmatrix}
   -1 \\    1 \\ \end{pmatrix} +y_{1}\begin{pmatrix}
   \xi^{2} \\    \xi \\ \end{pmatrix} = \begin{pmatrix} x \\
   y \\ \end{pmatrix} $$

In the following, we will denote $\pi$ the projection from $\mathbb{C}^{2}$ to $\mathbb{T}^{2} = \mathbb{C}^{2}/\Lambda$.
  \end{pr3}
  \begin{df3}\label{defffp}
A product in the sense of Ueda is a holomorphic map on $\mathbb{P}^{2}(\mathbb{C})$ such that there exists a Latt\`es example $\tilde{L}$ on $ \mathbb{P}^{1}(\mathbb{C})$ such that we have : $$L \circ \eta = \eta \circ (\tilde{L},\tilde{L}) $$ where $\eta$ is the map between $\mathbb{P}^{1}(\mathbb{C}) \times \mathbb{P}^{1}(\mathbb{C})$ and $\mathbb{P}^{2}(\mathbb{C})$ which is just the projectivization of $(x,y) \mapsto (x+y,xy)$, given by : $$\eta : ([x : x'],[y:y']) \mapsto [xy'+x'y:xy:x'y']$$ Such a map $L$ is semi-conjugate to an affine map on the complex torus $\mathbb{T}$ and is a Latt\`es map.
\end{df3} 
Latt\`es maps corresponding to Cases 1,2,3 and 4 of Proposition \ref{r31} are products in the sense of Ueda. The following technical result was shown in \cite{sj} (Theorems 4.2 and 4.4). It will be used in the proof of Proposition 3.3.1.  
\begin{pr3} \label{uweb} For any Latt\`es map $L$ on $\mathbb{P}^{2}(\mathbb{C})$, one of the following is true : \begin{enumerate} \item either one map in $\{L,L^{2},L^{3}\}$ is a product in the sense of Ueda \item either one of the maps $L^{k}$ in $\{L,L^{2},L^{3},L^{6}\}$ is preserving an algebraic web associated to a smooth cubic (see \cite{dj} for this notion) \end{enumerate}  \end{pr3}

The following is an easy consequence of Propositions 3.1 to 3.6 of \cite{sj}.
  
  \begin{pr3} \label{r33}
   Let $\Lambda$, $\Pi$ be one of the lattices and associated coverings defined in Proposition \ref{r31}. There exists a finite group of unitary matrices $G_{\text{Latt\`es}}= G_{\text{Latt\`es}} (\Lambda,\Pi)$ of finite order such that every Latt\`es map has its linear part of the form $aA$ where $a \in \mathbb{C}^{*}$, $|a|\ge1$ and $A \in G_{\text{Latt\`es}}$.   
  \end{pr3}
  \begin{rk} Here, the scaling factor $a$ takes discrete values. Moreover, arbitrarily large values of $|a|$ can be obtained (it can be easily seen by taking the composition of a Latt\`es map with itself). The equality of the two topological degrees gives : $(d')^{2} = |a|^{4} \cdot  |\mathrm{det}(A)|^{2} $ where $d'$ is the algebraic degree of $L$.

  \end{rk}
  Since according to the previous result, there are only finitely many possible linear parts $A$ for a Latt\`es map (up to multiplication by the factor $a$) which are all of finite order, we can define the following integer.
  \begin{df5} \label{order} We denote by $\mathrm{ord}_{\mathrm{Latt\grave{e}s}}$ the product of all the orders of the possible linear parts $A$ for a Latt\`es map. \end{df5}

  It can be found in \cite{sj} that $\mathrm{ord}_{\text{Latt\`es}}$ is equal to $6^{2} \cdot 8^{2} \cdot 12 \cdot 24$. In a first reading, we encourage the reader to consider only the case where the linear part of the Latt\`es map is equal to $\mathrm{Id}$. In the other cases, the dynamical ideas are the same but with a few additional technicalities from algebra. In particular, it is sufficient in order to prove in some cases the corollary of the main result (see subsection 1.2).
  \subsection{An algebraic property of Latt\`es maps}
  
  The goal of this subsection is to prove the following result. 
  \begin{pr3} \label{r35}

For every torus $\mathbb{T}$, there exists an integer $i = i(\mathbb{T}^{2})$ such that for any $k>0$, there exists an integer $d_{ k}>0$ such that for any Latt\`es map $L$ of algebraic degree $d > d_{k}$, coming from an affine map on $\mathbb{T}$, there exists a homogenous change of coordinates $\varphi$ on $\mathbb{P}^{2}(\mathbb{C})$ such that :  $\varphi ^{-1} \circ L \circ \varphi$ is a holomorphic endomorphism of $\mathbb{P}^{2}(\mathbb{C})$ of the form $[\overline{P}_{1} : \overline{P}_{2} : \overline{P}_{3}]$ where the polynomial $\overline{P}_{3}$ is a product of irreducible factors $\overline{P}_{3,j}$ such that at least $k$ factors $\overline{P}_{3,j}$ are of degree bounded by $ i$.

\end{pr3}

\begin{df5} Let $v$ be a vector of $\mathbb{C}^{2}$ which belongs to a lattice $\Lambda$ and $v_{0} \in \mathbb{C}^{2}$. We suppose that the action of $\Lambda$ upon $\mathbb{C} \cdot v$ by translation is cocompact. Let $\mathbb{T}^{2} = \mathbb{C}^{2}/\Lambda$ and $\pi : \mathbb{C}^{2} \rightarrow \mathbb{T}^{2}$ be the natural projection. Then, then we say that $\pi(v_{0} + \mathbb{C} \cdot v)$ is a compact line of the torus $\mathbb{T}$ of direction $v$. It is compact and $\pi(v_{0} + \mathbb{C} \cdot v) \simeq \pi(v_{0}) + \mathbb{C}/\Lambda'  \cdot v$ for some subgroup $\Lambda' \subset \Lambda $. The family of compact lines of the torus $\mathbb{T}$ of direction $v$ is the family of all the compact lines of the torus of direction $v$ obtained by varying $v_{0}$.

\end{df5}

Let us point out the fact that $v \in \Lambda$ is not sufficient to conclude that the action of $\Lambda$ upon $\mathbb{C} \cdot v$ by translation is cocompact.

\begin{pr3} \label{com} Let $\Lambda$, $\Pi$ be one of the lattices and associated coverings defined in Proposition \ref{r31}.  Let $v$ be a vector of $\mathbb{C}^{2}$ which belongs to $\Lambda$ such that the action of $\Lambda$ upon $\mathbb{C} \cdot v$ by translation is cocompact. The family of images under $\Pi$ of compact lines of direction $v$ on the torus $\mathbb{T}$ is a family of algebraic curves of $\mathbb{P}^{2}(\mathbb{C})$ of degree bounded by $i = i(v,\mathbb{T}^{2})$.

\end{pr3} \begin{proof}Let $\mathcal{F}$ be the family of images of compact lines of direction $v$ on the torus $\mathbb{T}$ under $\Pi$. The family $\mathcal{F}$ is a holomorphic compact family of compact curves so that by the GAGA principle it is an algebraic family of curves and in particular their degree is bounded by some $i=i(v,\mathbb{T}^{2})$
\end{proof}

\begin{pr3} \label{r32} Let $\Lambda$, $\Pi$ be one of the lattices and associated coverings defined in Proposition \ref{r31}. Then, there exists a line $\delta$ in $\mathbb{P}^{2}(\mathbb{C})$ such that $\Pi^{-1}(\delta)$ contains at least one compact line $\mathcal{D}$ of $\mathbb{T}$.  \end{pr3}

\begin{proof}  
  In each case, the following compact lines are convenient for $\delta$ and we give the preimage compact lines $\mathcal{D}$. The first four cases cover the case of a product in the sense of Ueda. \newline \newline 
  $\text{Case 1 : }  \delta = \{Y=0\}$ Indeed, $ Y=0$ if and only if $\wp(x)\wp(y) = 0$. $\Pi^{-1}(\{Y=0\})$ is an union of compact lines of the torus of the form $\{x_{0}\} \times \mathbb{T}^{1}$ and $\mathbb{T}^{1} \times \{y_{0}\}$ where the $x_{0},y_{0}$ are in $\wp^{-1}(\{0\})$. \newline \newline
   $\text{Cases 2, 3 and 4: }  \delta = \{Y=0\}$. The proof is similar to Case 1 with respectively  $\wp'(x)\wp'(y) = 0$, $\wp^{2}(x)\wp^{2}(y) = 0$ and $(\wp'(x))^{2}(\wp'(y))^{2} = 0$.\newline \newline $\text{Case 5: }  \delta = \{X=Z\}$   Indeed, $ X=Z$ if and only if $(\frac{\wp(x)\wp(y)+e_{1}^{2}}{\wp(x)\wp(y)-e_{1}^{2}})^{2} = 1$, this means if and only if $4e_{1}^{2}\wp(x)\wp(y) = 0$. $\Pi^{-1}(\{X=Z\})$ is an union of compact lines of the torus of the form $\{x_{0}\} \times \mathbb{T}^{1}$ and $\mathbb{T}^{1} \times \{y_{0}\}$ where the $x_{0},y_{0}$ are in  $\wp^{-1}(\{0\})$. \newline \newline $\text{Case 6: }  \delta = \{ Z=0\}$ Indeed, $ Z=0$ if and only if $ \wp'(x_{1})\wp(y_{1})-\wp(x_{1})\wp'(y_{1}) = 0$. $\Pi^{-1}(\{Z=0\})$ contains the compact line of the torus $\{x_{1} = y_{1}\}$ (in the coordinates $x_{1},y_{1}$). \newline \newline In all the cases, the preimage of $\delta$ by $\Pi$ contains a compact line of the torus.  \qedhere
   
   \end{proof}
   \begin{pr3} \label{r34} If an affine map $\mathcal{L}$ of linear part $aA$ on a torus $\mathbb{T}$ induces a Latt\`es map $L$ on $\mathbb{P}^{2}(\mathbb{C})$ and $\mathcal{D}$ is the preimage under $\Pi$ of the compact line $\delta$ given by Proposition \ref{r32}, then the preimage of $\mathcal{D}$ under $\mathcal{L}$ is a finite union of compact lines of the torus. Moreover, the number of possible directions is finite. For each $k>0$, there exists $d_{k}>0$ such that for every Latt\`es map $L$ of algebraic degree greater than $d_{k}$ induced by an affine map $\mathcal{L}$ on $\mathbb{T}$, there exist at least $k$ distinct irreducible components of $L^{-1}(\delta)$ of degree bounded by $i$.   
   \end{pr3}
  
  \begin{proof} From Proposition \ref{r33}, we know that the linear part of $\mathcal{L}$ is of the form $aA$ with $A \in G_{\text{Latt\`es}}$. We denote by $\mathcal{L}_{\mathbb{C}^{2}}$ an affine map on $\mathbb{C}^{2}$ which induces the affine map $\mathcal{L}$ on $\mathbb{T}$.  The linear part of $\mathcal{L}_{\mathbb{C}^{2}}$ is $aA$. We know that $\mathcal{D}$ is a compact line of the torus $\mathbb{T}$ of direction $v$ (for some vector $v$ of $\mathbb{C}^{2}$) and the preimage of $\mathcal{D}$ under the natural projection $\pi :\mathbb{C}^{2} \rightarrow \mathbb{T}$ is an union of lines of $\mathbb{C}^{2}$ of direction $v$. Since $\mathcal{D}$ is a compact line of $\mathbb{T}$, by definition, the action of $\Lambda$ on $\mathbb{C} \cdot v$ is cocompact. This is equivalent to the existence of two complex numbers $\omega_{1}$ and $\omega_{2}$ which are not $\mathbb{R}$-colinear and such that $\omega_{1} v \in \Lambda$, $\omega_{2} v \in \Lambda $. We fix $\omega_{1}$ and $\omega_{2}$. We have $aA \cdot \Lambda \subset \Lambda$ because $aA$ is the linear part of $\mathcal{L}$. Then $a^{2}A^{2} \cdot \Lambda \subset \Lambda, \ldots, a^{\mathrm{ord}(A)-1}A^{\mathrm{ord}(A)-1} \cdot \Lambda \subset \Lambda$ (here $\mathrm{ord}(A)$ is the order of $A$, we know that $A$ is of finite order because it belongs to the finite group $G_{\text{Latt\`es}}$). But $a^{\mathrm{ord}(A)-1}A^{\mathrm{ord}(A)-1} = a^{\mathrm{ord}(A)}(aA)^{-1}$. Then $(aA)^{-1} ( a^{\mathrm{ord}(A)} \omega_{1} v ) \in \Lambda$ and $ (aA)^{-1} ( a^{\mathrm{ord}(A)}  \omega_{2} v ) \in \Lambda$. For every line $\Delta$ of $\mathbb{C}^{2}$ of direction $v$, the preimage of $\Delta$ under $\mathcal{L}_{\mathbb{C}^{2}}$ is a line of $\mathbb{C}^{2}$ of direction $(aA)^{-1}(v)$. The two complex numbers $a^{\mathrm{ord}(A)}  \omega_{1}$ and $a^{\mathrm{ord}(A)}  \omega_{2}$ are not $\mathbb{R}$-colinear and satisfy $a^{\mathrm{ord}(A)} \omega_{1}   \cdot  (aA)^{-1} ( v ) \in \Lambda$ and $  a^{\mathrm{ord}(A)}  \omega_{2} \cdot   (aA)^{-1} (v ) \in \Lambda$ and then the action of $\Lambda$ on $\mathbb{C} \cdot (aA)^{-1}(v)$ is cocompact. Since $\pi \circ \mathcal{L}_{\mathbb{C}^{2}} = \mathcal{L} \circ \pi $, the preimage of $\mathcal{D}$ under $\mathcal{L}$ is an union of compact lines of $\mathbb{T}$ which all have the same direction. $G_{\text{Latt\`es}}$ is finite (see Proposition \ref{r33}) and so the possible number of directions is finite. Let $\mathcal{D}'$ be a preimage of $\mathcal{D}$ under $\mathcal{L}$. \newline \newline We denote by $\Omega$ a fundamental domain of the action of $\Lambda$ on $\mathbb{C} \cdot (aA)^{-1}(v)$. We have the following straightforward property : for every lines $\Delta_{1}, \Delta_{2} $ of direction $(aA)^{-1}(v)$ in $\mathbb{C}^{2}$, we have $\pi(\Delta_{1}) = \pi(\Delta_{2})$ if and only if every two points respectively in $\Delta_{1}$ and $\Delta_{2}$ are joined by a vector which lies in $\Omega +\Lambda$. Let us take $\lambda_{1}\in \Lambda$ such that $(aA)^{-1}\lambda_{1}$ is not $\mathbb{C}$-colinear to $\mathcal{D}'$. Then, there is some constant $a_{k}>0$ such that if $|a|>a_{k}$, then the $100k$ vectors $ (aA)^{-1}(\lambda_{1}), 2 (aA)^{-1}(\lambda_{1}), \ldots, 100 (aA)^{-1}(\lambda_{1}) $ do not belong to $   \Omega +\Lambda$. Then the $100k$ images of $\mathcal{D}'$ by translations of vectors $(aA)^{-1}(\lambda_{1}), 2 (aA)^{-1}(\lambda_{1}), \ldots, 100 (aA)^{-1}(\lambda_{1})$ are 100 distinct preimages of $\mathcal{D}$ under $\mathcal{L}$ and they are compact lines of $\mathbb{T}$ of same direction $\mathbb{C} \cdot (aA)^{-1}(v)$.  \newline \newline Their images under $\Pi$ are irreducible components of degree bounded by $i=i(v,\mathbb{T})$ by Proposition \ref{com}. If $|a|>a_{k}$, at least $k$ (this term $k$ is not optimal and we get it by projection of the previous $100k$ lines) of them are distinct preimages of $\delta$ under $L$. But $|a|>a_{k}$ if $\text{deg}(L)$ is superior to some value $d_{k,A}$ (see the remark after Proposition \ref{r33}). Then, it suffices to take for $d_{k}$ the maximal value of $d_{k,A}$ when varying $A$ in $G_{\text{Latt\`es}}$ (see Proposition \ref{r33}).
    \end{proof}
  
  \begin{proof}[Proof of Proposition \ref{r35}]

 Let $\delta$ be a line in $\mathbb{P}^{2}(\mathbb{C})$ as in Proposition \ref{r32}. The result is a consequence of Proposition \ref{r34} because after a suitable change of coordinates, we can take $\delta = \{Z = 0\}$. Then $\{\overline{P}_{3} = 0\}$ contains at least the $k$ irreducible components of degree bounded by $i$ which are preimages of $\delta$ by $L$.  \qedhere

\end{proof}

\subsection{A periodic orbit in the postcritical set}

  Remind that the integer $\mathrm{ord}_{\text{Latt\`es}}$ was defined in Definition \ref{order}. Beware that in the following, the period of a periodic point is the exact period.
  
  \begin{pr3} \label{r41} There exists an integer $K>0$ such that for every Latt\`es map $L$  defined on $\mathbb{P}^{2}(\mathbb{C})$, there exists a point $c$ in the critical set of $L$ which is sent after $n_{c}$ iterations on a periodic point $p_{c}$ of period $n_{pc}$ such that : \begin{enumerate} \item $n_{c}+n_{pc} \le K$ \item $n_{p_{c}}$ is a multiple of $\mathrm{ord}_{\mathrm{Latt\grave{e}s}}$ \end{enumerate} 
\end{pr3}

\begin{proof} Let us start with the case of one dimensional Latt\`es maps.

\begin{lem3}Let $\tilde{L}$ be a one-dimensional Latt\`es map. There exists a critical point $\tilde{c}$ of $L$ which is sent after $\tilde{n}_{c} \le 12$ iterations on a periodic point $\tilde{p}_{c}$ of period $\tilde{n}_{pc} \le 12$. 
\end{lem3}

 \begin{proof}
The Lattes map $\tilde{L}$, according to Lemma 3.4 of \cite{lpp555}, is such that the postcritical set $P_{\tilde{L}}$ of $\tilde{L}$ is entirely included inside the set of critical values of the covering $\Theta$ of $\mathbb{P}^{1}(\mathbb{C})$ by the complex torus $\mathbb{T}^{1}$. This implies that every critical point of $\tilde{L}$ is sent after one iteration inside the set of the critical values of $\Theta$. Moreover, let us bound from above the number of critical values. This number $c_{r}$ is bounded from above by the number of critical points (counted with multiplicity). Still according to \cite{lpp555}, $\Theta$ can only be a covering of orders  $\text{ord}(\Theta) = 2,3,4 \text{ or } 6$. The Riemann-H\"urwitz formula gives us that : $\chi(\mathbb{T}^{1}) = \text{ord}(\Theta)\chi(\mathbb{P}^{1}(\mathbb{C})) - c_{r}$ which implies $c_{r} = 2 \cdot \text{ord}(\Theta)$. In particular, this means that the image of every critical point $\tilde{c}$ of $\tilde{L}$ is sent after $\tilde{n}_{c} \le 12$ iterations on a periodic point $\tilde{p}_{c}$ of period $\tilde{n}_{pc} \le 12$ . \end{proof}
\begin{lem3}Let $L$ be a product in the sense of Ueda. There exists a point $c$ in the critical set of  $L$ which is sent after $n_{c} \le 12$ iterations on a periodic point $p_{c}$ of period $n_{pc} \le 24 \cdot \mathrm{ord}_{\mathrm{Latt\grave{e}s}}$ which is a multiple of $\mathrm{ord}_{\mathrm{Latt\grave{e}s}}$. In particular, we have : $n_{c}+n_{pc} \le 12+24 \cdot \mathrm{ord}_{\mathrm{Latt\grave{e}s}}$. 
\end{lem3}
\begin{proof}

We take a critical point $\tilde{c}$ of $\tilde{L}$ given by the previous lemma. We take a periodic point $\tilde{p}$ of $\tilde{L}$ of period $2 \cdot \text{ord}_{\text{Latt\`es}}$ (it can be found in \cite{bker} that such a point actually exists because any rational map on  $\mathbb{P}^{1}(\mathbb{C})$  of degree greater than 2 has a point of strict period $2 \cdot \text{ord}_{\text{Latt\`es}}>4$ ). Then the point $c = \eta(\tilde{c},\tilde{p})$ is a critical point of $L$ (remind that $\eta$ was defined in Definition \ref{defffp}). It is sent after $n_{c} \le 12$ iterations on a periodic point $\eta(\tilde{p}_{c},\tilde{L}^{n_{c}}(\tilde{p}))$. The period of $\tilde{p}_{c}$ is $\tilde{n}_{pc} \le 12$ and $\tilde{p}$ is of period $2 \cdot \text{ord}_{\text{Latt\`es}}$. This implies that in $\mathbb{P}^{1}(\mathbb{C}) \times \mathbb{P}^{1}(\mathbb{C})$, the periodic point  $(\tilde{p}_{c},\tilde{L}^{n_{c}}(\tilde{p}))$ for $(\tilde{L},\tilde{L})$ is of period a multiple of $2 \cdot \text{ord}_{\text{Latt\`es}}$ bounded by $24 \cdot \text{ord}_{\text{Latt\`es}}$. Since the map $\eta$ is a two-covering, in $\mathbb{P}^{2}(\mathbb{C})$, the periodic point $\eta(\tilde{p}_{c},\tilde{L}^{n_{c}}(\tilde{p}))$ for $L$ is of period $n_{pc}$ which is a multiple of $\text{ord}_{\text{Latt\`es}}$ bounded by $24 \cdot \mathrm{ord}_{\text{Latt\`es}}$. Then $n_{c}+n_{pc} \le 12+24 \cdot \mathrm{ord}_{\text{Latt\`es}}$.
\end{proof}

Let us now prove Proposition \ref{r41}. According to Proposition \ref{uweb}, we have :  \begin{enumerate} \item Either one map in $\{L,L^{2},L^{3}\}$ is a product in the sense of Ueda. In this first case, the previous lemma shows that one of the maps in $\{L,L^{2},L^{3}\}$ has a point of its critical set which is sent after at most 12 iterations onto a periodic point of period a multiple of $\text{ord}_{\text{Latt\`es}}$ bounded by $24 \cdot \text{ord}_{\text{Latt\`es}}$. This implies that there exists a critical point of $L$ which is sent after $n_{c}$ iterations onto a periodic point of period $n_{pc}$ which is a multiple of $\text{ord}_{\text{Latt\`es}}$ with $n_{c}+n_{pc} \le 3 \cdot (12+24 \cdot \text{ord}_{\text{Latt\`es}})$.  \item One of the maps $L^{k}$ in $\{L,L^{2},L^{3},L^{6}\}$ is preserving an algebraic web associated to a smooth cubic. This implies (see the remark after Theorem A in \cite{dj}) that the critical set of $L^{k}$ is sent after one iteration into the set of critical values of $\Pi$ which is a curve $\mathcal{PC}$. In this second case, we have that $L^{k}(\mathcal{PC}) \subset \mathcal{PC}$ and $L^{k}$ induces by restriction a map on $\mathcal{PC}$. Taking the normalization of $\mathcal{PC}$ if necessary, we can suppose that $\mathcal{PC}$ is regular. There are two possibilities. Either $\mathcal{PC}$ is isomorphic to $\mathbb{P}^{1}(\mathbb{C})$ and $L^{k}$ induces a rational map so it has a periodic point of period $\text{ord}_{\text{Latt\`es}}$(again, it can be found in \cite{bker} that such a point actually exists). Either $\mathcal{PC}$ is isomorphic to a complex torus and $L^{k}$ induces a multiplication on this torus which also has a periodic point of period $\text{ord}_{\text{Latt\`es}}$. In both cases, we see that $L$ has a critical point which is sent after at most 6 iterations on a point of period a multiple of $\text{ord}_{\text{Latt\`es}}$ bounded by $6 \cdot \text{ord}_{\text{Latt\`es}}$.  
\end{enumerate}
Then, taking $K =  \max(3 \cdot (12+24 \cdot \text{ord}_{\text{Latt\`es}}),6+6 \cdot \text{ord}_{\text{Latt\`es}})$, the proof of the proposition is done.  \qedhere

\end{proof}

\section{Perturbations of Latt\`es maps}
\subsection{Some useful lemmas} In this subsection, we prove two lemmas about complex analysis. The constants which are involved in these lemmas will be fixed in the two next subsections. The following first lemma will be used in 4.2.8, 4.3.20 and in the proof of Lemma 4.4.8.
\begin{lem3} \label{r75}
For every $m>0$, for every ball $\tilde{\mathbb{B}}$, for every $1>\psi_{1}>0$, $1>\psi_{2}>0$, there exist constants $\rho=\rho(m,\tilde{\mathbb{B}})>0$, $\sigma=\sigma(m,\tilde{\mathbb{B}})>0$ such that for every rational function $h$ of degree equal to $m$, there exists a ball $\mathbb{B} \subset  \tilde{\mathbb{B}} \subset \mathbb{C}^{2}$ of radius larger than $\rho$ such that : \begin{equation}     \label{eq3} \forall z \in \mathbb{B},  \frac{|| Dh(z) ||}{|h(z)|} \le \sigma \end{equation}  \end{lem3} \begin{equation}     \label{eq31} \forall (z,z') \in \mathbb{B}^{2},  \frac{| h(z) |}{|h(z')|} \le 1+\psi_{1} \end{equation}  \begin{equation}     \label{eq33} \forall (z,z') \in \mathbb{B}^{2},  \text{arg}(h(z)) -\text{arg}( h(z')) \le \psi_{2} \end{equation} 
The lemma will be a consequence of the following lemma.
\begin{lem3} \label{rbie}
For every $m>0$, for every ball $\tilde{\mathbb{B}}$, there exist constants $\overline{\rho}=\overline{\rho}(m,\tilde{\mathbb{B}})>0$, $\tau=\tau(m,\tilde{\mathbb{B}})>0$ such that for every rational function $h$ of degree $m$, there exists a ball $\overline{\mathbb{B}} \subset  \tilde{\mathbb{B}} \subset \mathbb{C}^{2}$ of radius larger than $\overline{\rho}$ such that :  $$  \frac{  \mathrm{inf}_{\overline{\mathbb{B}}}  | h|}  {  \mathrm{sup}_{\overline{\mathbb{B}}}  |h|}\ge \tau$$
\end{lem3}
\begin{proof}
Let us denote $\mathcal{R}_{\text{norm}}$ the set of rational maps of degree $m$ which can be written $h = \frac{h_{1}}{h_{2}}$ where $h_{1}$ and $h_{2}$ are two polynomials whose coefficients $(a_{ij})$ and $(b_{ij})$ are such that : $\text{max}(a_{ij}) = \text{max}(b_{ij}) = 1$. $\mathcal{R}_{\text{norm}}$ is a compact set. For a given $h \in  \mathcal{R}_{\text{norm}}$, since $h \neq 0$, there exists  $\overline{\rho}_{h}>0$, $\tau_{h}>0$, a ball $\overline{\mathbb{B}}_{h} \subset    \tilde{\mathbb{B}}  \subset \mathbb{C}^{2}$ of radius $\overline{\rho}_{h}$ such that :  $$  \frac {  \text{inf}_{\overline{\mathbb{B}}_{h}}  | h|} {  \text{sup}_{\overline{\mathbb{B}}_{h}}  |h|} \ge \tau_{h}$$The constants $\overline{\rho}_{h}$ and $\tau_{h}$ can be chosen locally constant for rational functions in $\mathcal{R}_{\text{norm}}$ near $h$. Since $\mathcal{R}_{\text{norm}}$ is compact, if we choose $\overline{\rho}=\overline{\rho}(n, \tilde{\mathbb{B}})$ the minimum of the $\overline{\rho}_{h}$ and $\tau=\tau(n, \tilde{\mathbb{B}})$ the minimum of the $\tau_{h}$ for a finite covering of $\mathcal{R}_{\text{norm}}$, we have : for every rational map $h \in \mathcal{R}_{\text{norm}}$ of degree $m$, there exists a ball $\overline{\mathbb{B}} \subset \tilde{\mathbb{B}} \subset \mathbb{C}^{2}$ of radius larger than $\overline{\rho}$ such that :  $$  \frac{  \text{inf}_{\overline{\mathbb{B}}}  | h|} {  \text{sup}_{\overline{\mathbb{B}}}  |h|} \ge \tau$$ Since every rational map $h$ of degree $m$ can be written $h = C^{ste} \cdot \tilde{h}$ with $\tilde{h} \in \mathcal{R}_{\text{norm}}$, the result is true for every rational map of degree $m$.  \qedhere

\end{proof} \begin{proof}[Proof of Lemma \ref{r75}]
 We fix such a ball $\overline{\mathbb{B}}$. Up to multiplying $h$ by a constant, which does not affect (\ref{eq3}), we can suppose that $|h|_{\infty}=1$. We denote $ \frac{1}{\tau} = \sigma$ where $\tau$ comes from Lemma \ref{rbie}. Then by the Cauchy inequality we have : $\frac{|| Dh(z) ||}{|h(z)|} \le \frac{1}{\tau} = \sigma$, this is (\ref{eq3}). Then (\ref{eq31}) and (\ref{eq33}) are simple consequences of (\ref{eq3}). The lemma is proven.  \qedhere

\end{proof}

The following interpolation result will be used in 4.2.7, 4.2.10, 4.3.21 (and thus in the proof of Lemma 4.4.6) and in the proof of Lemma 4.4.7. Remind that $n$ was defined in Proposition \ref{r45}.
\begin{lem3} \label{r777} Let us take $n$ balls $\mathcal{V}_{1},...,\mathcal{V}_{n} \subset \mathrm{Mat}_{2}(\mathbb{C})$. There exists an integer $\tilde{d} = \tilde{d}(\mathcal{V}_{1},...,\mathcal{V}_{n})$ and two real numbers $1>\psi_{1}>0$ and $1>\psi_{2}>0$ such that for every $\xi>0$, there exists a constant $\nu=\nu(n,\xi)>0$ such that : for every ball $\mathbb{B} \subset \mathbb{C}^{2}$ of radius bounded by 1, for every $\theta_{0} \in \mathbb{R}$, there exist a polynomial map $H=H(\mathcal{V}_{1},...,\mathcal{V}_{n},\mathbb{B},\theta_{0})$ of $\mathbb{C}^{2}$ of degree $\tilde{d}$ and $(n+1)$ balls $\mathbb{B}_{0},...,\mathbb{B}_{n} \subset \mathbb{B}$ of radius greater than $\nu \cdot \text{rad}(\mathbb{B})$ such that on each $\mathbb{B}_{j} $ :    $$\forall t \in (1-\psi_{1},1+\psi_{1}), \forall \theta \in (\theta_{0}-\psi_{2},\theta_{0}+\psi_{2}) : - e^{i \theta} \cdot t \cdot DH \in \mathcal{V}_{j} \text{  and  }2 \cdot |H|_{\infty}< \xi$$

\end{lem3} \begin{proof} We call $\tilde{v}_{1},...,\tilde{v}_{n}$ the centers of the balls $\mathcal{V}_{1},...,\mathcal{V}_{n} \subset \mathrm{Mat}_{2}(\mathbb{C})$. Let us take the ball $\mathbb{B} = \mathbb{B}(0,1)$. For a given $\theta_{0} \in [0,2\pi]$, there exists $H$ having its differentials at $n$ points $p_{i} \in \mathbb{B}(0,1)$ satisfying $H(p_{i}) = 0$ and $DH_{p_{i}} = e^{-i.\theta_{0}}\cdot \tilde{v}_{j}$ by interpolation. Taking sufficiently small balls $\mathbb{B}_{1},...,\mathbb{B}_{n}$ of radius $\nu$ around the points $p_{i}$, this gives the result for a given $\theta_{0} \in [0,2\pi]$ and $t =1$. Moreover, since the required condition are open, $H$ can be taken uniform on a small interval of values of $\theta$ and a small interval $(1-\psi_{1},1+\psi_{1})$ of values of $t$. Then $\tilde{d}$, $\nu$ and $\psi_{1}$ can be taken locally constant in $\theta$. Since $[0,2\pi]$ is compact, we take the maximal value of $\tilde{d}$ and the minimal values of $\nu$ and $\psi_{1}$ on a finite covering of $[0,2\pi]$ by intervals where $\tilde{d}$, $\nu$ and $\psi_{1}$ can be taken constant on each interval of the covering. In particular, since this covering is finite, there exists $\psi_{2}>0$ such that for each $\theta_{0} \in [0,2\pi]$, it is possible to find constant $H,m,\nu,\psi_{1}$ for every $\theta \in (\theta_{0}-\psi_{2},\theta_{0}+\psi_{2})$. This gives us the result for the fixed ball $\mathbb{B}(0,1)$. Then, the result follows for any ball $\mathbb{B}(\gamma,r)$ with $r \le 1$ by taking the map $\tilde{H} = (r \cdot \text{Id} + \gamma ) \circ  H \circ (\frac{1}{r} \cdot  \text{Id}-\gamma)$. It is easy to check that :  $- e^{i \theta_{0}} \cdot t \cdot D\tilde{H} = - e^{i \theta_{0}} \cdot t \cdot DH \in \mathcal{V}_{j}$ and $2 \cdot |\tilde{H}|_{\infty}<2 \cdot  r \cdot |H|_{\infty}< \xi$.
 \end{proof} 

\subsection{Fixing the constants relative to the torus $\mathbb{T}$ and the matrix of the linear part $A$} In the two next subsections, we fix some notation and define a certain number of constants and objects in the following specified order. As a guide for the reader objets denoted in roman letters are relative to $\mathbb{P}^{2}(\mathbb{C})$, and gothic letters are relative to the torus.

\begin{enumerate} [label = \arabic*.]
\item We fix a torus $\mathbb{T}$ and euclidean coordinates $\pi : \mathbb{C}^{2} \rightarrow \mathbb{T}$. We fix the projection $\Pi : \mathbb{T} \rightarrow \mathbb{P}^{2}(\mathbb{C})$ as in Proposition \ref{r31}. We fix the group $G_{\text{Latt\`es}}=G_{\text{Latt\`es}}(\mathbb{T},\Pi)$ given by Proposition \ref{r33}.
\item We fix a Fubini-Study metric $||.||_{FS}$ on $\mathbb{P}^{2}(\mathbb{C})$.
\item We fix the matrix of the linear part $A$ with $A \in G_{\text{Latt\`es}}$. We fix a line $\delta$ as in Proposition \ref{r32}. We fix affine coordinates $[z_{1},z_{2},z_{3}]$ on $\mathbb{P}^{2}(\mathbb{C})$ as in Proposition \ref{r35} in which $\delta = \{z_{3} = 0\}$. In the following, we dehomogenize by working in the chart $ \{[z_{1},z_{2},z_{3}] : z_{3} \neq 0\}$ on $\mathbb{P}^{2}(\mathbb{C})$.
\item  We first need a proposition. \begin{nt5} We will denote : $V^{j} = v_{j} + r_{j} \cdot B(0,1)$ (remind the balls $V^{j}$ were defined in Proposition \ref{r45}). $\frac{1}{4} \cdot V^{j}$ will denote the ball of same center as $V^{j}$ and with quarter of radius. \end{nt5}
We fix $\mathfrak{p}_{0} \in \mathbb{T} $ such that $\Pi(\mathfrak{p}_{0}) \in \{[z_{1},z_{2},z_{3}] : z_{3} \neq 0\}$ and $D\Pi_{\mathfrak{p}_{0}}$ is invertible. There exist invertible matrices $M_{1},...,M_{n}$ such that for every $j$ :  
$$(D\Pi^{-1})_{\Pi(\mathfrak{p}_{0})} \cdot   \Big( D\Pi_{\mathfrak{p}_{0}} \cdot A \cdot (D\Pi^{-1})_{\Pi( \mathfrak{p}_{0})} \Big)^{-1}    \cdot M_{j} \cdot   \Big( D\Pi_{\mathfrak{p}_{0}} \cdot A \cdot (D\Pi^{-1})_{\Pi( \mathfrak{p}_{0})} \Big)^{-1}     \cdot D\Pi_{\mathfrak{p}_{0}} =v_{j}     $$  Then, by continuity we have : 
\begin{lem3} \label{r73}
There exists a ball $\tilde{\mathfrak{B}} = \tilde{\mathfrak{B}}(\mathbb{T},A) \subset \mathbb{T}$ (remind we have fixed euclidean coordinates on $\mathbb{T}$) where $\Pi$ is invertible such that $\Pi(\tilde{\mathfrak{B}}) \Subset \{[z_{1},z_{2},z_{3}] : z_{3} \neq 0\}$, a constant $\sigma'=\sigma'(\mathbb{T},A)>0$ and $n$ balls $\mathcal{V}_{1},...,\mathcal{V}_{n} \subset \mathrm{Mat}_{2}(\mathbb{C})$ with :  \scriptsize    $$  \forall \mathfrak{p}_{i} \in \tilde{\mathfrak{B}},   (D\Pi^{-1})_{\Pi(\mathfrak{p}_{1})} \cdot     \Big( D\Pi_{\mathfrak{p}_{2}} \cdot A \cdot (D\Pi^{-1})_{\Pi( \mathfrak{p}_{3})} \Big)^{-1}      \cdot \mathcal{V}_{j} \cdot    \Big( D\Pi_{\mathfrak{p}_{4}} \cdot A \cdot (D\Pi^{-1})_{\Pi( \mathfrak{p}_{5})} \Big)^{-1}     \cdot  D\Pi_{\mathfrak{p}_{6}}  \in \frac{1}{4} \cdot V^{j}    $$ \end{lem3}
\begin{lem3} \label{r74} Reducing $\tilde{\mathfrak{B}}$ if necessary, there exists a constant $\sigma'=\sigma'(\mathbb{T},A)>0$ such that for every $w$ with $||w|| = 1$, we have :  $$ \inf_{ \mathfrak{p} \in \tilde{\mathfrak{B}} }  ||D(\Pi \circ A \circ \Pi^{-1})_{\Pi(\mathfrak{p})}(w)||  \ge \sigma' \cdot  (\sup_{ \mathfrak{p} \in \tilde{\mathfrak{B}} }  ||D\Pi_{\mathfrak{p}} ||) \cdot ||A|| \cdot  ( \sup_{ \mathfrak{p} \in \tilde{\mathfrak{B}} }  ||(D\Pi^{-1})_{\Pi(\mathfrak{p})}|| )     $$

\end{lem3}
\begin{proof}  We take :  $$\sigma' = \frac{1}{2} \cdot  \frac{ \text{inf}(\text{Sp}((D(\Pi \circ A \circ \Pi^{-1})_{\Pi(\mathfrak{p}_{0})} ) }{   (\sup_{ \mathfrak{p} \in \tilde{\mathfrak{B}} }  ||D\Pi_{\mathfrak{p}} ||) \cdot ||A|| \cdot  ( \text{sup}_{ \mathfrak{p} \in \tilde{\mathfrak{B}} }  ||(D\Pi^{-1})_{\Pi(\mathfrak{p})}|| )   }  $$ and the condition holds reducing the size of the ball $\tilde{\mathfrak{B}}$ around $\mathfrak{p}_{0}$ if necessary.
\end{proof} 
We fix such a ball $\tilde{\mathfrak{B}}$, a constant $\sigma'>0$ and $n$ balls $\mathcal{V}_{1},...,\mathcal{V}_{n} \subset \mathrm{Mat}_{2}(\mathbb{C})$ of centers $\tilde{v}_{1},...,\tilde{v}_{n}$. 

\item We will use the following notation :  \begin{nt5} In the following, we still denote $||M||$ the norm $||.||_{2,2}$ of a fixed matrix. We will denote :  $$||D\Pi|| = \sup_{   \mathfrak{p} \in \tilde{\mathfrak{B}}} ||D\Pi_{\mathfrak{p}}||  \text{  and  } ||D\Pi^{-1}|| = \sup_{   \mathfrak{p} \in \tilde{\mathfrak{B}}} ||(D\Pi^{-1})_{\Pi(\mathfrak{p})}||$$  \end{nt5}

\item We fix $\tilde{\mathbb{B}}=\tilde{\mathbb{B}}(\mathbb{T},A,\tilde{\mathfrak{B}})$ a ball included in $\Pi(\tilde{\mathfrak{B}})$. There exists some constant $\iota>0$ such that for every ball $\mathbb{B} \subset \tilde{\mathbb{B}}$ of radius $r$, $\Pi^{-1}(\mathbb{B}) \cap \tilde{\mathfrak{B}}$ contains a ball of radius $\iota \cdot r$. We fix such a constant $\iota$. We take the restriction of $||.||_{FS}$ on $\tilde{\mathbb{B}}$. Since $\tilde{\mathbb{B}} \Subset \{[z_{1},z_{2},z_{3}] : z_{3} \neq 0\} $, this restriction is equivalent to the euclidean metric on $\{[z_{1},z_{2},z_{3}] : z_{3} \neq 0\}$.
\item We fix the integer $m=\tilde{d}$ and the reals $\psi_{1},\psi_{2}>0$ given by Lemma \ref{r777} associated to the balls $\mathcal{V}_{1},...,\mathcal{V}_{n} \subset \mathrm{Mat}_{2}(\mathbb{C})$.
\item We fix the constants : $$\rho=\rho(m)>0 \text{ and } \sigma=\sigma(m)>0$$ given by Lemma \ref{r75} associated to the integer $m=\tilde{d}$, the ball $\tilde{\mathbb{B}}$ and the two reals $\psi_{1},\psi_{2}$.
\item  We take a constant $\xi=\xi(\mathbb{T},\Pi,\mathcal{V}_{i},A,\sigma,\sigma')$ satisfying the following inequality : $$0<\xi < \frac{1}{4} \cdot \text{min}_{1 \le j \le n} r_{j} \cdot\text{min}(\frac{\sigma' \cdot ||A||^{2}}{2  ||\Pi^{-1}||_{C^{2}} \cdot ||D\Pi||},\frac{||A||^{2}}{2\sigma \cdot ||D\Pi||^{3} \cdot ||D\Pi^{-1}||^{3}})$$
\item From 4.2.7 and 4.2.8, Lemma \ref{r777} gives us a new constant $\nu=\nu(m,\xi)>0$. 
\item Corollary \ref{r55} gives us a constant $N(\frac{\nu \rho}{10})$.
\item We fix a constant $d^{1}$ defined as follows. Let us point out that for any Latt\`es map $L$ of algebraic degree $d'$ coming from an affine map $\mathcal{L}$ on $\mathbb{T}$, of linear part $aA$, the equality of the two topological degrees gives : $(d')^{2} =  |a |^{4} \cdot  |\text{det}(A) |^{2}$. There are $(d')^{2} =  |a |^{4} \cdot  |\text{det}(A) |^{2}$ disjoint preimages of the torus $\mathbb{T}$ by the affine map $\mathcal{L}$ of volume $\frac{\text{vol}(\mathbb{T})}{ |a|^{4} \cdot  |\text{det}(A) |^{2}}$. Let us denote $\text{vol}_{r}$ the volume of a ball of radius $r$. Let us take $d^{1}$ such that both $(d^{1})^{2} \cdot \frac{ \text{vol}_{\iota \cdot \rho}}{ 10 \cdot \text{vol}(\mathbb{T}) }>(\frac{10}{\nu} \cdot N(\frac{\nu \rho}{10}))^{4}$ and $(d^{1})^{2}  \ge 100 \cdot \text{max}_{A \in G_{\text{Latt\`es}}} |\text{det}(A) |^{2}$ (remind that $\iota$ was defined in 4.2.6). In particular, this last condition implies that for any Latt\` es map of algebraic degree $d' \ge d^{1}$, we have $|a| \ge 2$.
\item $i$ was defined in Proposition \ref{r35} and $K$ in Proposition \ref{r41}, we fix $n_{H} = E(\frac{m+2K}{i})+1$. We fix $d^{2} =  d_{n_{H}+100}$ (this integer was also defined in Proposition \ref{r35}). We fix $d^{3} = 2K$.
\item We fix $d = \text{max}(d^{1},d^{2},d^{3})$.

\end{enumerate}
\subsection{Fixing the constants relative to the Latt\`es map}

\begin{enumerate} [label = \arabic*., resume]
\item Let $L$ be a Latt\`es map $L = [\overline{P}_{1} : \overline{P}_{2} : \overline{P}_{3}]$ of degree $d'>d$ associated to an affine map on $\mathbb{T}$ of linear part $aA$. \item  According to Proposition \ref{r35}, in the coordinates $[z_{1},z_{2},z_{3}]$ which were fixed in 4.2.3. we have that : 
$$\overline{P}_{3}(z_{1},z_{2},z_{3}) =  \prod_{1 \le j \le J} \overline{P}_{3,j}(z_{1},z_{2},z_{3}) $$ with $P_{3,j}$ irreducible and $\text{deg}( \overline{P}_{3,i}   ) \le i = i(\mathbb{T}^{2})$ for $J \ge j \ge J - m-2K+1$ (remind that $K$ was defined in Proposition \ref{r41}). In plain words, the last factors of the product have degree bounded by a constant $i$ depending only on the chosen torus $\mathbb{T}$. We will consider the restriction of $L$ to $\{[z_{1},z_{2},z_{3}] : z_{3} \neq 0\} \cap L^{-1}(\{[z_{1},z_{2},z_{3}] : z_{3} \neq 0\})$. We have : $$L(z_{1},z_{2}) = (\frac{\overline{P_{1}}(z_{1},z_{2},1)} {\overline{P_{3}}(z_{1},z_{2},1)}, \frac{\overline{P}_{2}(z_{1},z_{2},1)}{\overline{P_{3}}(z_{1},z_{2},1)})$$ We denote $P_{i}(z_{1},z_{2}) = \overline{P}_{i}(z_{1},z_{2},1)$. 
\item There exists a periodic point $p_{c}$ of period $n_{pc}$ (which is a multiple of $\text{ord}_{\text{Latt\`es}}$) which belongs to the postcritical set of $L$, according to Proposition \ref{r41}, we fix it once for all. We call $c$ the point of the critical set such that $p_{c}$ is in the orbit of $c$ and we have $n_{c}+n_{pc} \le K$ according to Proposition \ref{r41}, where $K$ is independent of the choice of $\mathbb{T}$ and $L$. Since $p_{c}$ is repelling, we can suppose that $c$ is the only critical point in $\{c,L(c),...,p_{c},...,L^{n_{pc}-1}(p_{c})\}$. We choose homogenous polynomials of degree 1 denoted by $Q_{1},...,Q_{n_{c}+n_{pc}-1}$ such that :  \begin{equation} \label{eq73} Q_{1}(c) = Q_{2}(L(c))= ... = Q_{n_{c}}(p_{c}) = Q_{n_{c}+n_{pc}-1}(L^{n_{pc}-1}(p_{c})) = 0 \end{equation} It is possible to take these polynomials such that at least one of the coefficients of $z_{1}$ and $z_{2}$ is non equal to 0 so we take the polynomials with this property.
\item 

Putting $$ \tilde{P}_{3}(z_{1},z_{2}) =  \prod_{J-2(n_{c}+n_{pc})-m+1 \le j \le J } P_{3,j}(z_{1},z_{2}), $$
let us denote by $h$ the rational function defined by : 
$$h(z_{1},z_{2}) = \frac{\prod _{1 \le j \le n_{c}+n_{pc}-1}  (Q_{j}(z_{1},z_{2},1))^{2} }{ \tilde{P}_{3}(z_{1},z_{2}) }$$

\item We denote : $h(\Pi(\mathfrak{p}_{0})) = |h(\Pi(\mathfrak{p}_{0}))|e^{i \theta_{1}}$. 
\item We choose the ball $\mathbb{B} \subset \tilde{\mathbb{B}}$ of radius larger than $\rho$ according to Lemma \ref{r75} applied to the ball $\tilde{\mathbb{B}}$ chosen in 4.2.6 and to the the constants $m,\psi_{1},\psi_{2}$ chosen in 4.2.7. We pick a ball $\mathfrak{B} \subset \Pi^{-1}(\mathbb{B}) \cap \tilde{\mathfrak{B}}$ (remind that $\tilde{\mathfrak{B}}$ was defined in Lemma \ref{r73}). According to 4.2.6, $\mathfrak{B}$ can be taken with its radius equal to $\iota \cdot \rho$ and this bound on its radius (not the ball itself, but the bound on its radius) is independent of $L$. Since $d' \ge d \ge d^{1} $ with $(d^{1})^{2} \cdot \frac{ \text{vol}_{\iota \cdot \rho}}{ 10 \cdot \text{vol}(\mathbb{T}) }>(\frac{10}{\nu} \cdot N(\frac{\nu \rho}{10}))^{4}$, there are at least $(\frac{10}{\nu} \cdot N(\frac{\nu \rho}{10}))^{4}$ preimages of $\mathfrak{B}$ by the affine map $\mathcal{L}$ inside $\mathfrak{B}$ which form a grid of balls.
\item We fix the polynomial map $H=H(\mathcal{V}_{1},...,\mathcal{V}_{n},\mathbb{B},\theta_{0})$ of $\mathbb{C}^{2}$ of degree $m=\tilde{d}$ and $(n+1)$ balls $\mathbb{B}_{0},...,\mathbb{B}_{n} \subset \mathbb{B}$ given by Lemma \ref{r777} and corresponding to this ball $\mathbb{B}$ and the value $\theta_{0} = \theta_{1}-2\arg(a)$ where $\theta_{1}$ was defined in 4.3.19. Each of them has its radius larger than $\nu$ times the radius of $\mathbb{B}$. We take $(n+1)$ balls $\mathfrak{B}_{0} \subset \mathfrak{B} \cap \Pi^{-1}(\mathbb{B}_{0}),...,\mathfrak{B}_{n} \subset \mathfrak{B} \cap \Pi^{-1}(\mathbb{B}_{n})$ of radius $\iota \cdot ( \nu \cdot \text{rad}( \mathbb{B} ))$. Then the quotient $\frac{\text{rad}(\mathfrak{B}_{j})}{\text{rad}(\mathfrak{B})}$ is equal for each $j \in \{1,...,n\}$ to $\frac{\iota \cdot ( \nu \cdot \text{rad}( \mathbb{B} ))}{\text{rad}(\mathfrak{B})} = \frac{\iota \cdot ( \nu \cdot \rho  )}{ \iota \cdot \rho} = \nu$. Let us point out that this bound on the radius is still independent of $L$.

\end{enumerate}
\subsection{Creating a correcting IFS}

\begin{nt5}
 In the following we construct three holomorphic families of holomorphic maps of $\mathbb{P}^{2}(\mathbb{C})$ which are successive perturbations of $L$ : $L'=L'_{\varepsilon_{1}}$, $L'' = L''_{\varepsilon_{1},\varepsilon_{2}}$ and $L'''=L'''_{\varepsilon_{1},\varepsilon_{2},\varepsilon_{3}}$ where $\varepsilon_{1},\varepsilon_{2},\varepsilon_{3} \in \mathbb{D}$. We have $L'_{0} =L$, $L''_{\varepsilon_{1},0} = L'_{\varepsilon_{1}}$ and $L'''_{\varepsilon_{1},\varepsilon_{2},0} = L''_{\varepsilon_{1},\varepsilon_{2}}$. We often forget the $\varepsilon_{i}$ and just denote $L',L'',L'''$ for simplicity when there is no risk of confusion. \end{nt5} \begin{nt5}
 
 We consider the $q = q(d)$ preimages of $\Pi(\mathfrak{B})$ under $L$ included inside $\Pi(\mathfrak{B})$ and the corresponding local inverses $(g_{j})_{1 \le j\le q}$ of $L$. We denote by $(\mathcal{G}_{j} )_{1 \le j\le q}$ the corresponding maps on $\mathfrak{B}$. For further perturbations $L'$, $L''$, $L'''$ of $L$, we consider the analogous objects and we call them $(g'_{j})_{1 \le j\le q}$, $(g''_{j})_{1 \le j\le q}$, $(g'''_{j})_{1 \le j\le q}$ and $(\mathcal{G}'_{j} )_{1 \le j\le q}$, $(\mathcal{G}''_{j} )_{1 \le j\le q}$, $(\mathcal{G}'''_{j} )_{1 \le j\le q}$.

\end{nt5}

In the following, we will see that $(\mathcal{G}'''_{j} )_{1 \le j\le q}$ is a correcting IFS.

\begin{nt5} \label{notf}
In the following, we will consider the continuation $p(L')$ (resp. $p(L''),p(L''')$) of the periodic point $p_{c}$. This one is well defined according to the implicit function Theorem since $p_{c}$ is repelling. In fact, for the successive perturbations that we will consider, we will always have $p(L') =p(L'')=p(L''')=p_{c}$. \end{nt5}

 \begin{pr} \label{r71777}

Let $L$ be a Latt\`es map of degree $d'>d$ coming from an affine map on $\mathbb{T}$, of linear part $aA$. Let $L =  (\frac{P_{1}}{P_{3}} , \frac{ P_{2}}{P_{3}})$ be the expression of $L$ in the chart $\{[z_{1},z_{2},z_{3}] : z_{3} \neq 0\}$ defined in 4.2.3. Then the family of rational maps $(L'_{\varepsilon_{1}})_{\varepsilon_{1}}$ where $L' = L'_{\varepsilon_{1} }= (\frac{P'_{1}}{P_{3}} , \frac{ P'_{2}}{P_{3}})$ is defined by :  \begin{equation} \label{eq77777} P'_{1}(z_{1},z_{2}) =  P_{1}(z_{1},z_{2})+\varepsilon_{1} h(z_{1},z_{2})P_{3}(z_{1},z_{2})H_{1}(z_{1},z_{2}) \end{equation} \begin{equation} \label{eq55555}  P'_{2}(z_{1},z_{2}) = P_{2}(z_{1},z_{2})+\varepsilon_{1} h(z_{1},z_{2})P_{3}(z_{1},z_{2})H_{2}(z_{1},z_{2})  \end{equation} where $h$ was defined in 4.3.18, $H$ in 4.3.21 and $\varepsilon_{1} \in \mathbb{D}$ is such that : \begin{enumerate}

\item For every $\varepsilon_{1} \in \mathbb{D}$, $L' = L'_{\varepsilon_{1}}$ extends to a holomorphic map of $\mathbb{P}^{2}(\mathbb{C})$ of the same degree as $L$ and $(L'_{\varepsilon_{1}})_{\varepsilon_{1}}$ is a holomorphic family of holomorphic maps of $\mathbb{P}^{2}( \mathbb{C})$     \item  $p(L') =p_{c}$ is periodic for $L'$ and is in the forward orbit of $c$ : $p_{c} = (L'_{\varepsilon_{1}})^{n_{c}}(c)$ and $(L'_{\varepsilon_{1}})^{n_{pc}}(p_{c}) = p_{c}$. Moreover $D(L'_{\varepsilon_{1}})_{c} = D(L)_{c}, \cdots,  D(L'_{\varepsilon_{1}})_{(L'_{\epsilon_{1}})^{n_{pc}-1}(p_{c})} = DL_{L^{n_{pc}-1}(p_{c})} $ for every $\varepsilon_{1} \in \mathbb{D}$

 \end{enumerate}

\end{pr}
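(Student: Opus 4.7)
The plan is to verify (1) by a direct degree count showing that the perturbation added to each $P_i$ is a polynomial of degree at most $d'$, and then to verify (2) by observing that this perturbation vanishes to order two at every point of the preperiodic orbit $\{c, L(c), \ldots, L^{n_{pc}-1}(p_c)\}$. I would handle (1) first, then (2).

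For (1), the key identity is $h P_3 = \bigl(\prod_{j} Q_j^{2}\bigr)\cdot (P_3/\tilde{P}_3)$, which is a polynomial because $\tilde{P}_3$ is, by construction, a product of irreducible factors of $\overline{P}_3$. Its degree equals $2(n_c+n_{pc}-1) + d' - \deg \tilde{P}_3$, and since $\tilde{P}_3$ is a product of $2(n_c+n_{pc})+m$ distinct irreducible factors, each of degree at least one, we have $\deg \tilde{P}_3 \ge 2(n_c+n_{pc})+m$ and hence $\deg(h P_3 H_i) \le d'-2$. Homogenizing with an appropriate power of $z_3$, the triple $(\overline{P}'_1, \overline{P}'_2, \overline{P}_3)$ becomes a triple of homogeneous polynomials of degree $d'$ whose coefficients depend polynomially on $\varepsilon_1$, so the family is holomorphic. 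For $\varepsilon_1=0$, the map $L'_0 = L$ is holomorphic of algebraic degree $d'$; the absence of a common zero of the three polynomials and the absence of a common polynomial factor are both Zariski-open conditions in $\varepsilon_1$, so they hold in a neighborhood of $0$, and after rescaling $\varepsilon_1 \mapsto \lambda \varepsilon_1$ with $\lambda$ small enough they hold on all of $\mathbb{D}$. I expect the main technical obstacle here to be tracking the dual role of $\tilde{P}_3$: it must divide $P_3$ so that the perturbation is polynomial, and it must have large enough degree so that the perturbation does not exceed degree $d'$; both are built into the choice of $\tilde{P}_3$ made in 4.3.17.

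For (2), I would rewrite the perturbation in the affine chart as $L'_{\varepsilon_1} = L + \varepsilon_1(h H_1, h H_2)$. By (\ref{eq73}), at every orbit point $L^k(c)$ with $0 \le k \le n_c+n_{pc}-1$ some $Q_j$ vanishes, and since $Q_j^2$ appears in the numerator of $h$, the function $h$ vanishes to order at least $2$ at $L^k(c)$. The denominator $\tilde{P}_3$ does not vanish on the orbit: each $L^k(c)$ lies in the chart $\{z_3 \neq 0\}$ and so does its image $L^{k+1}(c)$, so $L^k(c) \notin L^{-1}(\delta) = \{\overline{P}_3=0\}$, and since $\tilde{P}_3$ divides $\overline{P}_3$ we get $\tilde{P}_3(L^k(c)) \neq 0$. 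Therefore $\varepsilon_1(h H_1, h H_2)$ vanishes to order at least $2$ at every orbit point, so $L'_{\varepsilon_1}$ agrees with $L$ up to first order at each $L^k(c)$. A straightforward induction then yields $(L'_{\varepsilon_1})^k(c) = L^k(c)$ for $0 \le k \le n_c+n_{pc}$, together with the equality of differentials along the orbit, which is exactly condition (2); in particular $p_c = (L'_{\varepsilon_1})^{n_c}(c)$ and $(L'_{\varepsilon_1})^{n_{pc}}(p_c) = p_c$, and no further work is needed for the critical point $c$ since the differential equality there follows for free from the order-two vanishing.
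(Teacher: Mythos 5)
Your proof is correct and takes essentially the same route as the paper: for item 1, a degree count showing $hP_{3}H_{i}$ is a polynomial of degree at most $d'$ (because $\tilde{P}_{3}$ divides $P_{3}$ and is a product of sufficiently many factors), then openness of the property of being a holomorphic map plus a rescaling of $\varepsilon_{1}$, and linearity of the coefficients in $\varepsilon_{1}$ for the holomorphic family; for item 2, the order-two vanishing of $h$ along the orbit coming from the quadratic factors $Q_{j}^{2}$, which the paper dispatches in a single sentence. Your write-up is in fact more explicit than the paper's (e.g. verifying that $\tilde{P}_{3}$ does not vanish on the orbit so that $h$ is regular there), so there is no discrepancy to report.
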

\begin{proof}

Let first remark that since $P_{3}$ admits at least $n_{H} =(E( \frac{m+2K)}{i})+1)$ factors of degree bounded by $i$, the degrees of $hP_{3}H_{1}$ and $hP_{3}H_{2}$ are bounded by $\text{deg}(P_{1})=\text{deg}(P_{2})$. Since the property of being a holomorphic mapping is open, $L'$ is a holomorphic mapping for sufficiently small values of $\varepsilon_{1}$. For simplicity we will suppose that this is true for $\varepsilon_{1} \in \mathbb{D}$ after rescaling if necessary. Since $\varepsilon_{1}$ is just a linear factor, $(L'_{\varepsilon_{1}})_{\varepsilon_{1}}$ is a holomorphic family of holomorphic maps of $\mathbb{P}^{2}( \mathbb{C})$. Thus item 1 is proven. Item 2 is a consequence of the quadratic terms $Q_{j}^{2}$ in $h$ (see 4.3.18).  \qedhere

\end{proof}

\begin{pr} \label{r71}

Let $L$ be a Latt\`es map of degree $d'>d$ coming from an affine map on $\mathbb{T}$, of linear part $aA$. We are working in the chart $\{[z_{1},z_{2},z_{3}] : z_{3} \neq 0\}$ defined in 4.2.3. In this chart, $L = (\frac{P_{1}}{P_{3}} , \frac{ P_{2}}{P_{3}})$. Let $L'$ as in Proposition \ref{r71777}. Then there exists $t>0$ such that for every $0 \le p \le n$, for every real $0< \varepsilon_{1} <1$, there exists a ball $\mathfrak{B}_{p} \subset \mathfrak{B} \subset \mathbb{C}^{2}$ of radius $\text{rad}(   \mathfrak{B}_{p})  \ge \nu \cdot \text{rad}(   \mathfrak{B})  $ and a neighborhood $\mathcal{X}_{\varepsilon_{1}}$ of $L'$ in $\mathrm{Hol}_{d'}$ such that for every $L''' \in \mathcal{X}_{\varepsilon_{1}}$, if $j$ is such that $\mathcal{G}'''_{j}(\mathfrak{B}) \subset \mathfrak{B}_{p}   $ then $\mathcal{G}_{j}'''  $ is quasi-linear of type $(t\varepsilon_{1},p)$ (remind that the notion of type was defined in Definition \ref{type}). 
 
\end{pr}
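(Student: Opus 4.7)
The plan is to prove the proposition by a first-order perturbation analysis in $\varepsilon_1$, exploiting the algebraic cancellations built into the constants fixed in Subsections 4.2 and 4.3. First I would reduce to showing the result for $L''' = L'$ itself: for any perturbation $L'''$ of $L'$ in $\text{Hol}_{d'}$, the local inverse branches $g'''_j$ depend continuously in the $C^2$ topology on $L'''$, and since the definition of ``quasi-linear of type $(t\varepsilon_1, p)$'' is an open condition in the $C^2$ topology (all the relevant inequalities are strict and $V^p$ is an open ball), it suffices to prove the conclusion with strict inequalities for $\mathcal{G}'_j$ alone, then shrink to obtain a neighborhood $\mathcal{X}_{\varepsilon_1}$ of $L'$.

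Next I would expand $(DL')^{-1}$ to first order. In the chart $\{z_3 \neq 0\}$ we have $L' = L + \varepsilon_1 \cdot h \cdot H$ (writing $hH = (hH_1, hH_2)$), so $(DL')^{-1} = (DL)^{-1} - \varepsilon_1 (DL)^{-1} D(hH) (DL)^{-1} + O(\varepsilon_1^2)$. Using the semiconjugacy, $(DL)^{-1} = \frac{1}{a} D\Pi \cdot A^{-1} \cdot D\Pi^{-1}$ (at appropriate base points), whence
\begin{equation*}
D\mathcal{G}'_j|_\mathfrak{p} = \frac{1}{a} A^{-1} - \frac{\varepsilon_1}{a^2} A^{-1} D\Pi^{-1} D(hH) D\Pi A^{-1} + O(\varepsilon_1^2).
\end{equation*}
I would then split $D(hH) = h \cdot DH + H \cdot (\nabla h)^T$; the second summand is of size $\xi \sigma |h|$ by Lemma \ref{r75} and the bound $|H|_\infty < \xi/2$ from Lemma \ref{r777}, and goes into the error term of the quasi-linear decomposition.

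The heart of the proof is placing the dominant first-order correction into the correct ball $V^p$. Writing $h(g'_j(\Pi(\mathfrak{p}))) = s |h(\Pi(\mathfrak{p}_0))| e^{i\theta}$ with $s \in (1-f_1, 1+f_1)$ and $\theta \in (\theta_1 - f_2, \theta_1 + f_2)$ by Lemma \ref{r75}, the algebraic identity $e^{i\theta}/a^2 = e^{i(\theta - 2\arg a)}/|a|^2$ combined with the deliberate choice $\theta_0 := \theta_1 - 2\arg a$ in 4.3.21 puts $\phi := \theta - 2\arg a$ inside $(\theta_0-f_2, \theta_0+f_2)$. Lemma \ref{r777} then guarantees $-e^{i\phi} \cdot s \cdot DH \in \mathcal{V}_p$ on $\mathbb{B}_p$, and Lemma \ref{r73} transports this to $A^{-1} D\Pi^{-1}(-e^{i\phi} s \cdot DH) D\Pi A^{-1} \in \tfrac{1}{4} V^p$. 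Thus the first-order correction to $D\mathcal{G}'_j$ lies in $\frac{\varepsilon_1 |h(\Pi(\mathfrak{p}_0))|}{4|a|^2} V^p$, and taking $a_f := a$, $A_f := A^{-1} \in G_{\text{Latt\`es}}$ and $h_f := a \cdot (\text{correction})$ yields a quasi-linear decomposition with $h_f$ in $(t\varepsilon_1) V^p$ for $t := |h(\Pi(\mathfrak{p}_0))| / (4|a|)$, a constant depending only on $L$; the ball $\mathfrak{B}_p$ is the one fixed in 4.3.21, satisfying $\text{rad}(\mathfrak{B}_p) \ge \nu \cdot \text{rad}(\mathfrak{B})$.

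The main obstacle is verifying the $C^2$ estimate $\|\varepsilon\|_{C^2} < \frac{1}{1000} \|h_f\|$ on the error of the quasi-linear decomposition. Three contributions must be controlled: the subordinate Leibniz term $H \cdot (\nabla h)^T$, bounded via $\xi$ and $\sigma$; the $O(\varepsilon_1^2)$ tail in the Neumann-type expansion of $(DL')^{-1}$; and the second-derivative contributions bounding $\|\mathcal{G}'_j\|_{C^2}$. The inequality on $\xi$ fixed in 4.2.9, together with the norm bounds $\|D\Pi\|$, $\|D\Pi^{-1}\|$ and the lower bound $\sigma'$ of Lemma \ref{r74}, was engineered precisely so that each of these contributions is dominated by $\frac{1}{1000}\|h_f\|$; this amounts to tedious but routine bookkeeping and is the step that justifies the elaborate list of constants in Subsection 4.2.
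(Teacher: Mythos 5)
Your proposal follows essentially the same route as the paper's proof: expand the inverse branches to first order in $\varepsilon_{1}$, split $D(hH)=h\,DH+H\,Dh$, place the dominant $h\,DH$ contribution in $\tfrac{1}{4}V^{p}$ via the phase choice $\theta_{0}=\theta_{1}-2\arg(a)$ together with Lemmas \ref{r777} and \ref{r73}, absorb the remaining terms using the inequality on $\xi$ and the constants $\sigma,\sigma'$, and conclude by openness/continuity to get the neighborhood $\mathcal{X}_{\varepsilon_{1}}$. The only notable difference is that the paper carries out the error bookkeeping explicitly, and one of its error terms (the $\eta_{1}$ term coming from evaluating $D\Pi^{-1}$ at $g'(\Pi(\mathfrak{p}))$ rather than $g(\Pi(\mathfrak{p}))$) is first order in $\varepsilon_{1}$, so it is not covered by your $O(\varepsilon_{1}^{2})$ tail and should be added to your list of contributions, though it is controlled precisely by Lemma \ref{r74} and the branch of the $\xi$-inequality involving $\|\Pi^{-1}\|_{C^{2}}$ that you already invoke.
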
 \begin{proof}

 In the following, we omit the index $j$ on $g_{j}$, $g'_{j}$, $\mathcal{G}_{j}$ and $\mathcal{G}'_{j}$ and we take $0< \varepsilon_{1} <1$. Let us remind we work in the chart : $[z_{1},z_{2},z_{3}] \mapsto (\frac{z_{1}}{z_{3}},\frac{z_{2}}{z_{3}})$ on $\mathbb{P}^{2}(\mathbb{C})$.  We first show the result for $L'$. We have for every $\mathfrak{p} \in \mathfrak{B} \cap \mathcal{G}'(\mathfrak{B}) $ : 
$$D\mathcal{G}'_{\mathfrak{p}} - D\mathcal{G}_{ \mathfrak{p}} = D\Pi^{-1}_{  g'(\Pi(\mathfrak{p}))} \cdot Dg'_{\Pi(\mathfrak{p})} \cdot D\Pi_{\mathfrak{p}} - D\Pi^{-1}_{  g(\Pi(\mathfrak{p}))} \cdot D g_{\Pi( \mathfrak{p})} \cdot D\Pi_{\mathfrak{p}} = $$ $$( D\Pi^{-1}_{ g'(\Pi(\mathfrak{p}))} - D\Pi^{-1}_{    g(\Pi(\mathfrak{p}))} ) \cdot D g'_{\Pi(\mathfrak{p})} \cdot D\Pi_{p} +D\Pi^{-1}_{ g(\Pi(\mathfrak{p}))} \cdot (Dg'_{\Pi(\mathfrak{p})}    - Dg_{\Pi(\mathfrak{p})}) \cdot D\Pi_{\mathfrak{p}}                  $$ \newline 
  with : $$D g'_{\Pi(\mathfrak{p})} - D g_{\Pi(\mathfrak{p})} =  \Big( I_{2}+  (DL_{ g(\Pi(\mathfrak{p}))})^{-1} \cdot D(\varepsilon_{1}.h.H)_{g(\Pi(\mathfrak{p}))  } \Big)^{-1}\cdot (DL_{ g(\Pi(\mathfrak{p}))})^{-1} - (DL_{g(\Pi(\mathfrak{p}))})^{-1}  $$ $$ = - (DL_{ g(\Pi(\mathfrak{p}))})^{-1} \cdot  \varepsilon_{1} D(h.H)_{ g(\Pi(\mathfrak{p}))} (DL_{ g(\Pi(\mathfrak{p}))})^{-1} + o(\varepsilon_{1}) $$
$$  D(h.H)_{g(\Pi(\mathfrak{p}))} =     h(g(\Pi(\mathfrak{p})))   \cdot   DH_{g(\Pi(\mathfrak{p}))} +    H(g(\Pi(\mathfrak{p})  )) \cdot Dh_{g(\Pi(\mathfrak{p}))}    $$  Then we have : 
$$Dg'_{\mathfrak{p}} - Dg_{ \mathfrak{p}} =\eta_{1} + \eta_{2} +\eta_{3} +o(\varepsilon_{1})                $$
 where $\eta_{1}= (D\Pi^{-1}_{g'(\Pi(\mathfrak{p}))} - D\Pi^{-1}_{g(\Pi(\mathfrak{p}))} ) \cdot Dg'_{\Pi(\mathfrak{p})} \cdot D\Pi_{\mathfrak{p}}$ and :  $$\eta_{2} =  - D\Pi^{-1}_{g(\Pi(\mathfrak{p}))} \cdot  (DL_{g(\Pi(\mathfrak{p}))})^{-1} \cdot  \varepsilon_{1} \cdot   H(g(\Pi(\mathfrak{p})  )) \cdot  Dh_{g(\Pi(\mathfrak{p}))} \cdot (DL_{g(\Pi(\mathfrak{p}))})^{-1}    \cdot D\Pi_{\mathfrak{p}}$$ $$\eta_{3} =  - D\Pi^{-1}_{ g(\Pi(\mathfrak{p}))} \cdot  (DL_{ g(\Pi(\mathfrak{p}))})^{-1} \cdot  \varepsilon_{1} \cdot h(g(\Pi(\mathfrak{p})))   \cdot   DH_{g(\Pi(\mathfrak{p}))} \cdot  (DL_{ g(\Pi(\mathfrak{p}))})^{-1}   \cdot D\Pi_{\mathfrak{p}}$$

\begin{lem} For any $\mathfrak{p} \in \mathfrak{B} \cap \mathcal{G}'( \mathfrak{B})$ we have : $$\eta_{3}  \in \frac{1}{4 |a|^{2}} \cdot \varepsilon_{1} \cdot |h(\Pi(\mathfrak{p}_{0}))| \cdot V^{j}$$  \end{lem}\begin{proof} This is due to the fact that $H$ has been taken in 4.3.21 so that $-
\frac{h(\Pi(\mathfrak{p}))}{a^{2}} \cdot D(H)_{g(\Pi(\mathfrak{p})) }$ belongs to $\frac{|h(\Pi(\mathfrak{p}_{0}))|}{ |a|^{2}} \cdot  \mathcal{V}_{j}$ and by the definition of $\mathcal{V}_{j}$ (see Lemma \ref{r73}). \end{proof}

\begin{lem} We have :   $ ||\eta_{1}||  <  \frac{1}{4 |a|^{2}} \cdot \varepsilon_{1} \cdot  |h(\Pi(\mathfrak{p}_{0}))| \cdot \min_{1 \le j \le n} r_{j}  $ \end{lem}

\begin{proof}    Since $ 2 \cdot |H|_{\infty}  \le \xi$ (by Lemma \ref{r777}) and by Lemma \ref{r74} for every $\mathfrak{p} \in \mathfrak{B} \cap \mathcal{G}'(\mathfrak{B}) $ we have :

 \begin{eqnarray*} || g'(\Pi(\mathfrak{p}))-  g(\Pi(\mathfrak{p}))|| & \le &  \frac{\varepsilon_{1} \cdot  |h(g(\Pi(\mathfrak{p})))|.\xi}{   \text{inf}_{||w|| = 1} ||D(\Pi \circ aA \circ \Pi^{-1})(w)|| }  \\ &<&  \frac{\varepsilon_{1} \cdot  |h(g(\Pi(\mathfrak{p})))| \cdot \xi}{\sigma' \cdot |a| \cdot   ||A || \cdot ||D\Pi|| \cdot ||D\Pi^{-1}||} \end{eqnarray*}

 Then we have : \begin{eqnarray*}
||\eta_{1}||   &          =  & ||    (D\Pi^{-1}_{ g'(\Pi(\mathfrak{p}))} - D\Pi^{-1}_{   g(\Pi(\mathfrak{p}))} ) \cdot Dg'_{\Pi(\mathfrak{p})} \cdot D\Pi_{\mathfrak{p}}  || \\   &  \le &  ||   D\Pi^{-1}_{g'(\Pi(\mathfrak{p}))} - D\Pi^{-1}_{  g(\Pi(\mathfrak{p}))} || \cdot ||  Dg'_{\Pi(\mathfrak{p})}  || \cdot || D\Pi || \\ 
   & <  &  \frac{\varepsilon_{1} \cdot  |h(g(\Pi(\mathfrak{p})))| \cdot \xi}{\sigma' \cdot |a|  ||A || \cdot ||D\Pi|| \cdot ||D\Pi^{-1}||}   \cdot  ||\Pi^{-1}||_{C^{2}} \cdot ||D g'_{\Pi(\mathfrak{p})} || \cdot || D\Pi || \\ 
   & <  &    \frac{\varepsilon_{1} \cdot 2|h(\Pi(\mathfrak{p}_{0}))| \cdot \xi}{\sigma' \cdot  |a| \cdot  ||A || \cdot ||D\Pi|| \cdot ||D\Pi^{-1}||}     \cdot ||\Pi^{-1}||_{C^{2}} \cdot( \frac{1}{  |a| \cdot   ||A ||} \cdot  ||D\Pi||  \cdot ||D\Pi^{-1}||) \cdot || D\Pi || \\  & < & \frac{1}{4 |a|^{2}} \cdot \varepsilon_{1} \cdot  |h(\Pi(\mathfrak{p}_{0}))| \cdot \text{min}_{1 \le j \le n} r_{j}   \end{eqnarray*} by the inequality on $\xi$ (see 4.2.9). \end{proof}

\begin{lem} We have : $||  \eta_{2} ||< \frac{1}{4 |a|^{2}}.\varepsilon_{1}. |h(\Pi(\mathfrak{p}_{0}))|.\min_{1 \le j \le n} r_{j} $,
\end{lem}
\begin{proof} \begin{eqnarray*}    || \eta_{2}  ||  \text{    }          &  \le &   ||D\Pi^{-1}|| \cdot ||    DL^{-1}   || \cdot \varepsilon_{1} \cdot ||H|| \cdot ||Dh|| \cdot   ||    DL^{-1}   || \cdot ||D\Pi|| \\ & \le &    ||D\Pi^{-1}|| \cdot     \Big( \frac{1}{ |a| \cdot ||A ||} \cdot  ||D\Pi|| \cdot  ||D\Pi^{-1}|| \Big) ^{2}  \cdot   \varepsilon_{1} \cdot \xi \cdot ||Dh||    \cdot   ||D\Pi||    \\
  & = & \frac{1}{ |a|^{2}} \cdot \varepsilon_{1} \cdot \xi \cdot   \frac{1}{||A ||^{2}} \cdot ||D\Pi| |^{3}   \cdot  |   |D\Pi^{-1}|| ^{3} \cdot || Dh||  \\ & \le & \frac{1}{4 |a|^{2}} \cdot \varepsilon_{1} \cdot  |h(\Pi(\mathfrak{p}_{0}))| \cdot \text{min}_{1 \le j \le n} r_{j}  \end{eqnarray*}
by the inequality on $\xi$ (see 4.2.9) and $|| Dh(z) || \le \sigma \cdot |h(z)| \le \sigma \cdot 2 |h(\Pi(\mathfrak{p}_{0}))|$ for $z \in \mathfrak{B}$ by inequality (1) of Lemma \ref{r75}.  \qedhere
\end{proof}

The three previous lemmas imply that on $\mathfrak{B} \cap \mathcal{G}'(\mathfrak{B})$, $D\mathcal{G}'_{\mathfrak{p}} - D\mathcal{G}_{ \mathfrak{p}}$ belongs to $t \cdot V^{j}$ with $t = \frac{\varepsilon_{1} \cdot |h(\Pi(\mathfrak{p}_{0}  ))|}{ |a|^{2}}$. Then by continuity, for a given $\varepsilon_{1}$ (and then a given $L'$), there exists a neighborhood $\mathcal{X}_{\varepsilon_{1}}$ of $L'$ in $\text{Hol}_{d'}$ such that for every sufficiently small perturbation $L''' \in \mathcal{X}_{\varepsilon_{1}}$ of $L'$, if $j$ is such that $\mathcal{G}'''_{j}(\mathfrak{B}) \subset \mathfrak{B}_{p}   $ then $\mathcal{G}_{j}'''  $ is quasi-linear of type $(t\varepsilon_{1},p)$. The proof of Proposition \ref{r71} is complete.  \qedhere

\end{proof}

\subsection{Well oriented postcritical set}
\begin{nt5} We fix $\mathfrak{pc}$ a point of $\Pi^{-1}(p_{c})$ (remind that the periodic point $p_{c}$ was defined in Proposition \ref{r41}) . \end{nt5}

\begin{nt5}
We denote by $\mathrm{PCrit}(L)$ the postcritical set of $L$, this is the set $ \mathrm{PCrit}(L) = \bigcup_{n \ge 0} (L)^{n}(\mathrm{Crit}(L))$ where $\mathrm{Crit}(L)$ is the critical set of $L$. The notation will be the same for perturbations $L',L'',L''$.   
\end{nt5}
We pick a vector $w_{1}$ and a value $\theta$ satisfying Property (P) of Corollary \ref{r55}. Still according to Corollary \ref{r55}, there exists an open set of admissible values for $w_{1}$ so we choose to take it in the following way. The map $\mathcal{L}^{n_{pc} \cdot \text{ord}(A)}$ is an affine map on the torus $\mathbb{T}$ of linear part $a^{  n_{pc} \cdot \text{ord}(A) } \cdot A^{n_{pc} \cdot \text{ord}(A)} =a^{n_{pc} \cdot \text{ord}(A)} \cdot  I_{2}^{n_{pc}} = a^{n_{pc} \cdot \text{ord}(A)} \cdot I_{2}$ with $|a| \ge 2$ (see 3.2.12). Points with dense forward orbit for $\mathcal{L}^{n_{pc} \cdot \text{ord}(A)}$ are dense in $\mathbb{T}$. Moreover, since $n_{pc}$ divides $n_{pc} \cdot \text{ord}(A)$, $\mathfrak{pc}$ is a fixed point of $\mathcal{L}^{n_{pc} \cdot \text{ord}(A)}$. We pick $w_{1}$ such that $\mathfrak{pc}+\pi(w_{1})$ is a point of dense forward orbit for $\mathcal{L}^{n_{pc} \cdot \text{ord}(A)}$ (remind that $\pi : \mathbb{C}^{2} \rightarrow \mathbb{T}$ is the natural projection). Since the linear part of $\mathcal{L}^{n_{pc} \cdot \text{ord}(A)}$ is $ a^{n_{pc} \cdot \text{ord}(A)} \cdot I_{2}$ and $\mathfrak{pc}$ is a fixed point of $\mathcal{L}^{n_{pc} \cdot \text{ord}(A)}$, we have that the whole forward orbit of $\mathfrak{pc}+\pi(w_{1})$ under $\mathcal{L}^{n_{pc} \cdot \text{ord}(A)}$ is contained in the line going through $\mathfrak{pc}$ and $\mathfrak{pc}+\pi(w_{1})$. In particular, this line is dense in the torus $\mathbb{T}$. We pick $w_{2}$ such that $(w_{1},w_{2})$ is a basis of $\mathbb{C}^{2}$ and $\pi(w_{2})$ is not tangent to $\Pi^{-1}(\mathrm{PCrit}(L))$ at $\mathfrak{pc}$. \newline \newline
Here is the main result of this subsection :

\begin{pr} \label{r97}Let $\mathfrak{B}$ be as in 4.3.20. There exists a neighborhood $\mathcal{W}(L)$ of $L$ in $\mathrm{Hol}_{d'}$ such that : every map $L'=L'_{\varepsilon_{1}}$ as in Proposition \ref{r71777} is accumulated by maps $L'''_{\varepsilon_{1},\varepsilon_{2},\varepsilon_{3}}=L'''$ in $\mathcal{W}(L)$ such that there exists a component $\Gamma \subset \Pi^{-1}(    \mathrm{PCrit}(L''')   )$ whose restriction to $\mathfrak{B}$ is a $(\theta,w_{1})$-quasi-diameter (remind that this notion was defined in Definition \ref{def33}).
\end{pr}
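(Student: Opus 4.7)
The plan is to perform two further perturbations $L'' = L''_{\varepsilon_1, \varepsilon_2}$ and $L''' = L'''_{\varepsilon_1, \varepsilon_2, \varepsilon_3}$ of $L'$, both of the skew form $[P_1 + R_1 P_3 : P_2 + R_2 P_3 : P_3]$ as in Proposition \ref{r71777}, and then propagate a smooth germ of the postcritical curve at $p_c$ up to the ball $\mathfrak{B}$ using the linear dynamics of $\mathcal{L}^{n_{pc} \cdot \text{ord}(A)}$. The first perturbation $L''$ desingularises the component of $\text{PCrit}(L'')$ through $p_c$ (which exists because $p_c = (L'')^{n_c}(c)$). I take $R_1, R_2$ divisible by $\prod_j Q_j^2$ so that the entire preperiodic orbit $c, L''(c), \ldots, (L'')^{n_{pc}-1}(p_c)$ and the differentials there are preserved, and absorbed into the low-degree factors of $P_3$ reserved at 4.2.13 and 4.3.17 so the algebraic degree is unchanged; a generic such $R_1, R_2$ kills the obstruction to the critical germ at $p_c$ being smooth, producing a well-defined tangent line.

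Next, the perturbation $L'''$ prescribes this tangent direction. Tuning a further coefficient of $R_1, R_2$ supported near the critical orbit produces a two-real-parameter family of first-order corrections to the tangent line of the smooth branch $\tilde{\gamma}$ of $\Pi^{-1}(\text{PCrit}(L'''))$ at $\mathfrak{pc}$, whose image (after lifting by $\Pi$) covers an open subset of $\mathbb{P}^1(\mathbb{C})$; I pick $\varepsilon_3$ arbitrarily small so that $\tilde{\gamma}$ is tangent to $\mathbb{C} \cdot w_1$ at $\mathfrak{pc}$. By continuity, for $\varepsilon_2, \varepsilon_3$ small enough, $L'''$ stays in the neighborhood $\mathcal{X}_{\varepsilon_1}$ of Proposition \ref{r71}, hence in some fixed neighborhood $\mathcal{W}(L)$ of $L$ in $\text{Hol}_{d'}$.

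To carry $\tilde{\gamma}$ into $\mathfrak{B}$, I use that $\text{PCrit}(L''')$ is forward invariant under $L'''$, so $\mathcal{L}^k(\tilde{\gamma}) \subset \Pi^{-1}(\text{PCrit}(L'''))$ for every $k \ge 0$, together with the fact that $\mathcal{L}^{n_{pc} \cdot \text{ord}(A)}$ fixes $\mathfrak{pc}$ with scalar linear part $\lambda \cdot I_2$, where $\lambda = a^{n_{pc} \cdot \text{ord}(A)}$ satisfies $|\lambda| \ge 2^{n_{pc} \cdot \text{ord}(A)}$. On the universal cover, the $N$-th iterate of $\mathcal{L}^{n_{pc} \cdot \text{ord}(A)}$ is the pure homothety $z \mapsto \mathfrak{pc} + \lambda^N(z - \mathfrak{pc})$, so $\mathcal{L}^{N n_{pc} \text{ord}(A)}(\tilde{\gamma})$ is a genuine rescaling of $\tilde{\gamma}$ about $\mathfrak{pc}$ by factor $\lambda^N$; on any fixed compact scale it converges in $C^1$ to the straight line $\mathfrak{pc} + \mathbb{C} \cdot w_1$ as $N \to \infty$. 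By the choice of $w_1$, the orbit of $\mathfrak{pc} + w_1$ under $\mathcal{L}^{n_{pc} \cdot \text{ord}(A)}$ is dense in $\mathbb{T}$, so that straight line crosses $\pi(\mathfrak{B})$ as a genuine diameter through some translate; taking $N$ large enough and selecting the corresponding branch of the iterate in the universal cover yields a $(\theta, w_1)$-quasi-diameter $\Gamma \subset \Pi^{-1}(\text{PCrit}(L'''))$ of $\mathfrak{B}$.

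The main obstacle is balancing three constraints on $L''$ and $L'''$ simultaneously: preservation of the algebraic degree (paid for by the low-degree factors of $P_3$), preservation of the preperiodic critical orbit and its differentials up to $p_c$ (enforced by the $Q_j^2$ factors, so that the period $n_{pc}$ and the tangent-direction calculation at $\mathfrak{pc}$ via the differentials at $c$ are unchanged), and realisation of the target tangent direction $\mathbb{C} \cdot w_1$ while keeping $L'''$ inside $\mathcal{X}_{\varepsilon_1}$ so that the correcting IFS of subsection 4.4 survives. The technical heart is an infinitesimal transversality check at $L'$, showing that the two real parameters available in $\varepsilon_3$ cover an open set of tangent directions at $\mathfrak{pc}$; once this is in place, the scalar character of the linear part of $\mathcal{L}^{n_{pc} \cdot \text{ord}(A)}$ together with the density of the orbit line make the propagation step essentially automatic by a standard $C^1$-compactness argument.
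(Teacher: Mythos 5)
Your proposal diverges from the paper's argument in the step that aligns the postcritical germ with the direction $w_1$, and the way you set up that alignment creates two genuine gaps.

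\textbf{First gap: you cannot prescribe the tangent direction of the postcritical germ at $p_c$ to be exactly $w_1$ while also taking $\varepsilon_3\to 0$.} As $\varepsilon_3 \to 0$ the tangent direction of the smooth branch of $\Pi^{-1}(\text{PCrit}(L'''))$ at $\mathfrak{pc}$ converges to the tangent direction of the unperturbed branch for $L''$, which is some fixed direction having no reason to equal (or be within $\theta$ of) $w_1$. Your two-real-parameter family of first-order corrections only sweeps out an $O(\varepsilon_3)$-disk around that unperturbed direction in $\mathbb{P}^1(\mathbb{C})$, so you cannot hit $w_1$ with $\varepsilon_3$ small. Since the statement requires $L'''$ to \emph{accumulate} $L'$, this is a real obstruction. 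The paper does something strictly weaker and therefore robust (Lemma 4.5.11): it preserves the differentials along the periodic orbit \emph{except} at $p_c$, and uses the free parameter to break the scalar form of $D_{p_c}(L''')^{n_{pc}}$ into $a^{n_{pc}}\cdot M\,\mathrm{diag}(1+\varepsilon_3,1)\,M^{-1}$, with $D\Pi_{\mathfrak{pc}}(w_1)$ as the \emph{dominant} eigenvector and $D\Pi_{\mathfrak{pc}}(w_2)$ as the subdominant one transverse to the germ. Then Lemma 4.5.5 shows that iteration pushes \emph{any} smooth germ transverse to $w_\mu$ to the $w_\lambda$-line in $C^1$; this works for any $\varepsilon_3>0$, no matter how small, the only cost being more iterates. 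So the dynamics align the direction, and you never need to prescribe it.

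\textbf{Second gap: you push $\tilde\gamma$ forward by $\mathcal{L}$, but $L'''$ is no longer semi-conjugate to an affine map on $\mathbb{T}$.} Forward invariance gives $(L''')^k(\tilde\gamma_{\text{downstairs}})\subset\text{PCrit}(L''')$, but the identity $\Pi\circ\mathcal{L}=L\circ\Pi$ fails with $L'''$ in place of $L$, so the inclusion $\mathcal{L}^k(\tilde\gamma)\subset\Pi^{-1}(\text{PCrit}(L'''))$ that your propagation step rests on is simply false for $L'''\neq L$. This is exactly the difficulty the paper's Lemma 4.5.8 is designed to handle: it linearizes $L'''$ at $p_c$ by a map $\varphi_{L'''}$ with $\varphi_{L'''}$ varying continuously in the $C^0$ topology (which requires a careful re-reading of Poincaré's normal-form proof and a uniqueness statement, Lemma 4.5.9), so that near $p_c$ one can use the honest dynamics of $L'''$ rendered linear, and then Corollary 4.5.17 carries the quasi-diameter across the torus by continuity with $L$ (where the lift $\mathcal{L}$ and the dense orbit line through $\mathfrak{pc}+w_1$ \emph{do} apply). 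Your sketch has no substitute for this continuity-of-linearization step.

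In short, the paper's route --- desingularize ($L''$), then tune the eigenvalue hierarchy and eigenvectors rather than the tangent direction ($L'''$), then linearize with uniform control and propagate by proximity to $L$ --- sidesteps both of the obstructions that your more direct "tune the tangent, push by $\mathcal{L}$" plan runs into.
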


The following lemma is well known.
\begin{lem} \label{r98}
Let $\mathfrak{L}$ be a linear automorphism of $\mathbb{C}^{2}$ and $\Gamma \subset \mathbb{C}^{2}$ a complex submanifold through 0 such that : \begin{enumerate}
\item   the eigenvalues $\lambda,\mu$ of $\mathfrak{L}$ are such that $|\lambda| > |\mu|>1$. Let $w_{\lambda}$ and $w_{\mu}$ be the respective eigenvectors.
\item $w_{\mu}$ is transverse to $\Gamma$ at 0 \end{enumerate}
Then, $(\mathfrak{L}^{k} (\Gamma))_{k \ge 0}$ converges uniformly to the line $\mathbb{C} \cdot w_{\lambda}$ in the $C^{1}$-topology.

\begin{proof}
We can take $w_{\lambda} = e_{1}$ and $w_{\mu} = e_{2}$. The eigenvector $w_{\mu}$ of $\mu$ is transverse to $\Gamma$ at 0. Then locally $\Gamma$ is a graph $\gamma$ over a small disk $\mathbb{D}_{\gamma} \subset \mathbb{D}$ : $\{(t, \gamma(t)) : t \in \mathbb{D}_{\gamma})   \} $. For every $k \in \mathbb{N}$, $\mathfrak{L}^{k}(   \{(t, \gamma(t)) : t \in \mathbb{D}_{\gamma})   \} ) = \{(\lambda^{k} \cdot t , \mu^{k} \cdot \gamma(t)) : t \in \mathbb{D}_{\gamma} \}$. Since $|\lambda| >1$, for large $k$, we have $\mathbb{D} \subset \lambda^{k} \cdot \mathbb{D}_{\gamma} $. Then  $\mathfrak{L}^{k}(   \{(t, \gamma(t)) : t \in \mathbb{D}_{\gamma})   \} ) $ contains $\{(s, \mu^{k} \cdot \gamma(\frac{s}{\lambda^{k}})) : s \in \mathbb{D}\}$. But there exists $C_{\gamma}>0$ such that $|\gamma(t) | < C_{\gamma} \cdot t$ near 0. Then $\mu^{k} \cdot \gamma(\frac{s}{\lambda^{k}}) < C_{\gamma} \cdot (\frac{\mu}{\lambda})^{k}$ converges uniformly to 0 on $\mathbb{D}_{\gamma}$. Then, for every $\theta'>0$, there exists $k$ such that $\mathfrak{L}^{k}(   \{(t, \gamma(t)) : t \in \mathbb{D}_{\gamma})   \} ) $ contains $ \{ (s, \tilde{\gamma}(s)): s \in \mathbb{D} \} =  \{  (s, \mu^{k} \cdot \gamma(\frac{s}{\lambda^{k}} ) : s \in \mathbb{D}\} $ with $   | \tilde{\gamma}(s)  | \le \theta'  $. Then by the Cauchy inequality this implies that $|(\tilde{\gamma})'(s) | \le \theta'$.  \qedhere

\end{proof}

\end{lem}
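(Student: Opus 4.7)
The plan is a direct calculation after diagonalizing $\mathfrak{L}$ and using the transversality hypothesis to write $\Gamma$ locally as a holomorphic graph. First I would choose the basis $(w_\lambda,w_\mu)$ as coordinates so that $\mathfrak{L}$ becomes the diagonal map $(x,y)\mapsto(\lambda x,\mu y)$; this costs nothing since $C^1$-convergence to the line $\mathbb{C}\cdot w_\lambda$ is invariant under linear changes of coordinates. Since $w_\mu$ is transverse to $T_0\Gamma$, the implicit function theorem provides a disk $\mathbb{D}_\rho \subset \mathbb{C}$ and a holomorphic function $\gamma\colon \mathbb{D}_\rho\to\mathbb{C}$ with $\gamma(0)=0$ such that $\Gamma$ coincides near $0$ with the graph $\{(t,\gamma(t)):t\in \mathbb{D}_\rho\}$.

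Next I would compute the iterate explicitly: $\mathfrak{L}^k$ sends this local piece to $\{(\lambda^k t,\mu^k\gamma(t)):t\in\mathbb{D}_\rho\}$, which after the reparametrization $s=\lambda^k t$ is the graph over $\lambda^k\cdot\mathbb{D}_\rho$ of the holomorphic function
\[
\tilde\gamma_k(s)=\mu^k\,\gamma\!\left(\tfrac{s}{\lambda^k}\right).
\]
Because $|\lambda|>1$, any fixed compact disk $\mathbb{D}_R$ in $\mathbb{C}$ is contained in $\lambda^k\cdot\mathbb{D}_\rho$ for all sufficiently large $k$, so for large $k$ the set $\mathfrak{L}^k(\Gamma)$ contains a genuine graph $\{(s,\tilde\gamma_k(s)):s\in\mathbb{D}_R\}$ over the target disk on the $w_\lambda$-axis.

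Then I would estimate $\tilde\gamma_k$ in $C^0$. The holomorphy of $\gamma$ together with $\gamma(0)=0$ yields a constant $C_\gamma>0$ with $|\gamma(t)|\le C_\gamma|t|$ on a fixed neighborhood of $0$, so on $\mathbb{D}_R$
\[
|\tilde\gamma_k(s)| \le C_\gamma\,|s|\cdot\bigl|\tfrac{\mu}{\lambda}\bigr|^k \le C_\gamma R\,\bigl|\tfrac{\mu}{\lambda}\bigr|^k,
\]
which tends to $0$ uniformly on $\mathbb{D}_R$ since $|\mu|<|\lambda|$. Finally, to promote this to $C^1$-convergence I would apply Cauchy's inequality on a slightly larger disk $\mathbb{D}_{R+1}$, where the same uniform estimate holds for large $k$: the holomorphy of $\tilde\gamma_k$ gives $|\tilde\gamma_k'(s)|\le \sup_{\mathbb{D}_{R+1}}|\tilde\gamma_k|$ on $\mathbb{D}_R$, so the derivative also vanishes uniformly. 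Thus $\tilde\gamma_k\to 0$ in $C^1(\mathbb{D}_R)$, which is precisely the statement that $\mathfrak{L}^k(\Gamma)$ converges to $\mathbb{C}\cdot w_\lambda$ uniformly on compact sets in the $C^1$-topology. There is no real obstacle here; the only tiny subtlety is choosing the enlarged disk $\mathbb{D}_{R+1}$ before invoking Cauchy so that $C^1$-estimates come for free from $C^0$-estimates on a neighborhood.
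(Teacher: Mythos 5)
Your proof is correct and follows essentially the same route as the paper: diagonalize $\mathfrak{L}$, write $\Gamma$ locally as a graph $\gamma$ over the $w_\lambda$-direction, reparametrize $\mathfrak{L}^k(\Gamma)$ as the graph of $\tilde\gamma_k(s)=\mu^k\gamma(s/\lambda^k)$, use $|\gamma(t)|\le C_\gamma|t|$ to get uniform $C^0$-decay at rate $|\mu/\lambda|^k$, and then upgrade to $C^1$ via Cauchy's inequality. Your only departure is the (welcome) explicit care in applying Cauchy's estimate on a slightly enlarged disk, a point the paper glosses over.
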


\begin{nt5}
We denote for every $(\lambda,\mu ) \in (\mathbb{C}^{*})^{2}$ by $\mathrm{Diag}_{\lambda,\mu} $ the following map from $\mathbb{C}^{2} $ to $\mathbb{C}^{2}$ :
$$ \mathrm{Diag}_{\lambda,\mu} : (z_{1},z_{2}) \mapsto (\lambda \cdot z_{1}, \mu \cdot z_{2}) $$

\end{nt5}

\begin{lem}\label{igo}
The linear part $A \in G_{\text{Latt\`es}}$ of $L$ is diagonalizable. \end{lem}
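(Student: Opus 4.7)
The plan is to invoke the finite-order / unitary nature of $A$ that was already recorded for $G_{\text{Latt\`es}}$. Recall from Proposition \ref{r33} that $G_{\text{Latt\`es}}$ was explicitly introduced as a \emph{finite group of unitary matrices}, and $A \in G_{\text{Latt\`es}}$; so $A$ is unitary and of finite order in $\mathrm{GL}_2(\mathbb{C})$.

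First I would point out that any unitary matrix is in particular normal (it satisfies $AA^{*} = A^{*}A = I_{2}$), and hence is diagonalizable by the spectral theorem for normal operators on a finite-dimensional complex inner product space. For the very low-dimensional case at hand one does not really need the spectral theorem: it suffices to argue directly that, because $G_{\text{Latt\`es}}$ is a finite group, there exists an integer $m \ge 1$ with $A^{m} = I_{2}$. Then the minimal polynomial $\mu_{A}(X)$ of $A$ divides $X^{m} - 1$, which splits into distinct linear factors over $\mathbb{C}$ (its roots are the $m$-th roots of unity). Consequently $\mu_{A}$ has simple roots, which is precisely the classical criterion for diagonalizability.

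Thus $A$ is conjugate to a diagonal matrix in $\mathrm{GL}_{2}(\mathbb{C})$, which is what we had to prove. I do not foresee any obstacle: both arguments above are standard textbook facts, and the only input is the explicit structural statement on $G_{\text{Latt\`es}}$ already established in Proposition \ref{r33}. The main reason this lemma is recorded here is simply to legitimize the use of the eigenvectors $w_{\lambda}, w_{\mu}$ of $A$ (or of iterates of the linear part $aA$) in the forthcoming applications of Lemma \ref{r98}.
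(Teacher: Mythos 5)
Your proof is correct and, in its second paragraph, coincides exactly with the paper's argument: $A$ has finite order since $G_{\text{Latt\`es}}$ is finite, so its minimal polynomial divides $X^{m}-1$, which has simple roots, whence diagonalizability. The extra remark that $A$ is unitary (hence normal) is a valid alternative route but adds nothing beyond the paper's proof.
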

\begin{proof}
Since $G_{\text{Latt\`es}}$ is a finite group, we have that $A$ is of finite order. In particular, $A^{\text{ord}(G_{\text{Latt\`es}})} = I_{2}$. Then $R( A) = 0$ where $R(X)=X^{\text{ord}(G_{\text{Latt\`es}})}-1$ has simple roots. Then $A$ is diagonalizable.  \qedhere
\end{proof}

\begin{lem} \label{r99} Let $(f_{\varepsilon })_{\varepsilon \in \mathbb{D}^{3}}$ be a holomorphic family of holomorphic germs defined in a neighborhood $\mathcal{U}$ of 0 such that for every $\varepsilon \in \mathbb{D}^{3}$, $D(f_{\varepsilon})_{0}$ is diagonalizable and 0 is a repelling fixed point for $f_{\varepsilon}$. We denote by $\lambda(\varepsilon)$, $\mu(\varepsilon)$ the eigenvalues of $D(f_{\varepsilon})_{0}$ and by $w_{\lambda}$, $w_{\mu}$ the associated eigenvectors. We suppose that in the family $(f_{\varepsilon})_{\varepsilon \in \mathbb{D}^{3}}$, $w_{\lambda} = w_{1}$ and $w_{\mu} = w_{2}$ are constant. We suppose that $|\lambda(0)|^{2} >|\mu (0)| \ge  |\lambda(0) |$. Then there exists a neighborhood $\mathcal{U}' \subset \mathcal{U}$ of 0 and a neighborhood $\mathcal{V}$ of 0 in $\mathbb{D}^{3}$ such that for every $\varepsilon \in \mathcal{V}$, $f_{\varepsilon}$ is holomorphically linearizable in $\mathcal{U}'$ : there exists a holomorphic map $\varphi_{f_{\varepsilon}}$ defined on $\mathcal{U'}$ such that : $$ \mathrm{Diag}_{\lambda(\varepsilon),\mu(\varepsilon)} \circ \varphi_{f_{\varepsilon}} = \varphi_{f_{\varepsilon}} \circ f_{\varepsilon} $$
Moreover, $\varphi_{f_{\varepsilon}}$ varies continuously with $\varepsilon$ in the $C^{0}$ topology.
\end{lem}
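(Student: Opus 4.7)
The plan is to apply Poincaré's linearization theorem fiberwise, then upgrade it to a statement with uniform domain and continuous parameter dependence using the absence of resonances.

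First I would verify that at $\varepsilon = 0$ the pair $(\lambda(0),\mu(0))$ is non-resonant. Since both eigenvalues satisfy $|\lambda(0)|, |\mu(0)| > 1$, any potential resonance would be of the form $\lambda = \lambda^{a}\mu^{b}$ or $\mu = \lambda^{a}\mu^{b}$ with integers $a,b\geq 0$ and $a+b\geq 2$. Taking moduli and using the assumption $|\lambda(0)|\leq|\mu(0)|<|\lambda(0)|^{2}$, a short case analysis rules out every such relation: if $b\geq 1$ in the first equation, then $|\lambda|=|\lambda|^{a}|\mu|^{b}\geq |\lambda|^{a+b}\geq |\lambda|^{2}$, contradiction; if $b=0$ then $|\lambda|=|\lambda|^{a}$ with $a\geq 2$, also impossible; for $\mu=\lambda^{a}\mu^{b}$ with $b\geq 1$ one gets $|\lambda|^{a}\leq 1$, forcing $a=0$ and trivial $b=1$; with $b=0$ one would have $|\mu|\geq |\lambda|^{2}$, again contradicting the hypothesis. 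In particular $(\lambda(0),\mu(0))$ lies in the Poincaré domain.

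By continuity of the eigenvalues in $\varepsilon$, both strict inequalities $|\lambda(\varepsilon)|,|\mu(\varepsilon)|>1$ and $|\mu(\varepsilon)|<|\lambda(\varepsilon)|^{2}$ persist on a neighborhood $\mathcal{V}\subset \mathbb{D}^{3}$ of $0$, so the same non-resonance argument applies uniformly on $\mathcal{V}$. On $\mathcal{V}$, I would then solve the formal conjugacy equation $\mathrm{Diag}_{\lambda(\varepsilon),\mu(\varepsilon)}\circ \varphi_{f_{\varepsilon}}=\varphi_{f_{\varepsilon}}\circ f_{\varepsilon}$ term by term in the basis $(w_{1},w_{2})$ (which is independent of $\varepsilon$, simplifying the bookkeeping), normalizing $D(\varphi_{f_{\varepsilon}})_{0}=\mathrm{Id}$. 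Each Taylor coefficient of $\varphi_{f_{\varepsilon}}$ is obtained by dividing a polynomial expression in the Taylor coefficients of $f_{\varepsilon}$ and the eigenvalues $\lambda(\varepsilon),\mu(\varepsilon)$ by a nonzero factor of the form $\lambda(\varepsilon)^{a}\mu(\varepsilon)^{b}-\lambda(\varepsilon)$ or $\lambda(\varepsilon)^{a}\mu(\varepsilon)^{b}-\mu(\varepsilon)$, which is bounded away from $0$ on $\mathcal{V}$ by the non-resonance argument (using $|\lambda|^{a}|\mu|^{b}\to \infty$ as $a+b\to\infty$). This yields holomorphic dependence of the formal conjugacy on $\varepsilon\in \mathcal{V}$.

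The convergence of this formal power series on a uniform neighborhood $\mathcal{U}'\subset \mathcal{U}$ of $0$ is the classical Poincaré estimate: one chooses a constant $\delta>0$ with $|\lambda(\varepsilon)^{a}\mu(\varepsilon)^{b}-\lambda(\varepsilon)|, |\lambda(\varepsilon)^{a}\mu(\varepsilon)^{b}-\mu(\varepsilon)|\geq \delta\cdot |\lambda(\varepsilon)|^{a}|\mu(\varepsilon)|^{b}$ for $a+b\geq 2$ and $\varepsilon\in\mathcal{V}$, which is possible by compactness of $\overline{\mathcal{V}}$ together with the strict inequalities $|\mu(\varepsilon)|<|\lambda(\varepsilon)|^{2}$ and $|\lambda(\varepsilon)|,|\mu(\varepsilon)|>1$. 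Combined with uniform Cauchy estimates on the Taylor coefficients of $f_{\varepsilon}$ on a fixed polydisk (which exist by compactness since $(f_\varepsilon)_\varepsilon$ is a holomorphic family), this gives a uniform bound of geometric type on the coefficients of $\varphi_{f_{\varepsilon}}$, yielding a common disk of convergence $\mathcal{U}'$.

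Finally, continuity in the $C^{0}$-topology is automatic from the construction: each Taylor coefficient of $\varphi_{f_{\varepsilon}}$ depends continuously (in fact holomorphically) on $\varepsilon$, and the uniform geometric bound on the coefficients produces a normally convergent series on $\mathcal{U}'$, whence the limit is continuous in $\varepsilon$. The main technical obstacle is really just extracting the uniform lower bound $\delta>0$ on the small divisors over the whole parameter neighborhood; everything else is a parameterized repetition of the classical Poincaré theorem.
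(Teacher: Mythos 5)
Your proposal is correct in substance, but it takes a genuinely different route from the paper. You prove a parameterized Poincar\'e theorem directly: you check that under $|\lambda(0)|\le|\mu(0)|<|\lambda(0)|^{2}$ (both moduli $>1$) there are no resonances, solve the homological equations term by term with divisors $\lambda^{a}\mu^{b}-\lambda$ and $\lambda^{a}\mu^{b}-\mu$ ($a+b\ge 2$) bounded below uniformly in $\varepsilon$, and obtain a uniform domain and continuity (indeed holomorphy) in $\varepsilon$ from a majorant estimate uniform on $\overline{\mathcal{V}}$. The paper instead re-runs the constructive proof of Theorem 6.2.3 of \cite{jp7} with parameters, checking that each successive change of coordinates (the linear normalization $\varphi_{lin}$, the series defining $\varphi_{1}$, the infinite product in $\varphi_{2}$, and the final triangular change $\varphi_{3}$) varies continuously with $\varepsilon$; since that construction produces a linear coefficient $q_{1}$ formally divided by $\mu-\lambda$, which may vanish when $|\mu|=|\lambda|$, the paper needs the uniqueness statement of Lemma \ref{rlin} to pin $q_{1}$ down. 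Your direct approach avoids this issue entirely, because with the normalization $D(\varphi_{f_{\varepsilon}})_{0}=\mathrm{Id}$ in the fixed eigenbasis $(w_{1},w_{2})$ only divisors with $a+b\ge 2$ occur, and it yields the stronger conclusion of holomorphic dependence on $\varepsilon$; in exchange you must carry out the small-divisor and majorant estimates yourself, which you only sketch, whereas the paper leans on the quoted textbook proof and only has to verify continuity of each step.

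One point to patch: you assert that the persistence of $|\lambda(\varepsilon)|,|\mu(\varepsilon)|>1$ and $|\mu(\varepsilon)|<|\lambda(\varepsilon)|^{2}$ alone lets ``the same non-resonance argument'' run on $\mathcal{V}$, but your case analysis at $\varepsilon=0$ also used $|\mu(0)|\ge|\lambda(0)|$, which is not an open condition and may fail after perturbation; with only the two conditions you list, resonances of the form $\lambda=\mu^{b}$, $b\ge 2$, are not excluded in general. They are excluded near $\varepsilon=0$ because the strict inequality $|\mu(\varepsilon)|^{2}>|\lambda(\varepsilon)|$ does persist (as $|\mu(0)|^{2}\ge|\lambda(0)|^{2}>|\lambda(0)|$), and with the pair of strict inequalities $|\mu|<|\lambda|^{2}$ and $|\lambda|<|\mu|^{2}$ the case analysis closes; this is exactly the pair of conditions the paper's proof imposes on $\mathcal{V}$ (stated there with the inequalities typographically reversed). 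With that one-line fix, your argument is complete in outline.
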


The proof will be based on the following well known result (Theorem 6.2.3 in \cite{jp7}).

\begin{pr}    Let $F$ be an invertible map with repulsive fixed point 0. Suppose that the eigenvalues $\lambda,\mu$ of $DF_{0}$ satisfy the condition $ |\lambda|^{2} >|\mu | \ge  |\lambda |>1$. Then $F$ is holomorphically conjugate to $\mathrm{Diag}_{\lambda,\mu} $.
\end{pr}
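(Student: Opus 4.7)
The plan is to prove the classical two-dimensional Poincaré linearization theorem: in the Poincaré domain with no resonances, any contracting holomorphic germ is holomorphically conjugate to its linear part. I will apply this to $G := F^{-1}$ and then invert. The whole argument has three steps.

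\textbf{Step 1 (Reduction and non-resonance).} Since $0$ is repelling for $F$, the inverse germ $G = F^{-1}$ is well defined near $0$ and has $0$ as an attracting fixed point with eigenvalues $\lambda' = 1/\lambda$ and $\mu' = 1/\mu$ satisfying $0 < |\mu'| \le |\lambda'| < 1$ and $|\lambda'|^2 < |\mu'|$. I need to verify the Poincaré non-resonance condition: for every $(m_1,m_2) \in \mathbb{Z}_{\ge 0}^2$ with $m_1+m_2 \ge 2$, neither $\lambda'$ nor $\mu'$ equals $(\lambda')^{m_1}(\mu')^{m_2}$. Taking moduli and distinguishing the cases $m_2=0$, $m_1=0$, and $m_1,m_2\ge 1$, the two inequalities $|\mu'|\le |\lambda'|<1$ and $|\lambda'|^2<|\mu'|$ make every such equality impossible. (The hypothesis implicitly gives diagonalizability of $DF_0$; this is automatic when $\lambda \ne \mu$, and must be assumed otherwise for the conclusion to make sense.)

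\textbf{Step 2 (Linearization of $G$ by rescaled iterates).} After a linear change of coordinates I may assume $DG_0 = \Lambda := \mathrm{diag}(\lambda',\mu')$, so $G(z) = \Lambda z + R(z)$ with $\|R(z)\| \le C\|z\|^2$ on some small bidisk $\Delta$. Define $\varphi_n := \Lambda^{-n}\circ G^n$. From the identity $\varphi_{n+1}-\varphi_n = \Lambda^{-(n+1)}\circ (R\circ G^n)$ and the bound $\|G^n(z)\| \le \rho^n\|z\|$ valid on $\Delta$ for any $\rho > |\lambda'|$ (after possibly shrinking $\Delta$), I get
\[
\|\varphi_{n+1}(z)-\varphi_n(z)\| \;\le\; C' \,\frac{\rho^{2n}}{|\mu'|^{n+1}}\,\|z\|^2.
\]
The hypothesis $|\lambda'|^2 < |\mu'|$ lets me choose $\rho$ so that $\theta := \rho^{2}/|\mu'| < 1$, yielding geometric convergence of $(\varphi_n)$ on $\Delta$. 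The limit $\varphi := \lim_n \varphi_n$ is holomorphic with $D\varphi_0 = I_2$ (hence a local biholomorphism), and passing to the limit in $\varphi_{n+1} = \Lambda^{-1}\circ\varphi_n\circ G$ gives the functional equation $\varphi\circ G = \Lambda\circ\varphi$.

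\textbf{Step 3 (Inverting to $F$).} Composing $\varphi\circ G = \Lambda\circ\varphi$ on both sides with $F = G^{-1}$ yields $\varphi\circ F = \Lambda^{-1}\circ\varphi$, and by construction $\Lambda^{-1} = \text{Diag}_{\lambda,\mu}$. Hence $\varphi$ conjugates $F$ to $\text{Diag}_{\lambda,\mu}$, as required. The main technical obstacle is the convergence estimate of Step 2: one must shrink the bidisk enough that the effective contraction rate $\rho$ is close enough to $|\lambda'|$ for $\rho^{2}<|\mu'|$ to hold. This is exactly the content of the strict inequality $|\lambda|^{2}>|\mu|$; in the borderline case $|\lambda|^{2} = |\mu|$ one can produce a resonance $\mu = \lambda^{2}$, and only a Poincaré--Dulac normal form (not a genuine linearization) survives. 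Standard Cauchy estimates on $\Delta$ then upgrade the $C^{0}$ convergence to $C^{1}$ convergence, confirming that $D\varphi_0 = I_2$ and hence that $\varphi$ is a genuine biholomorphic conjugacy.
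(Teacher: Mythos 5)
Your proof is correct. Note that the paper itself does not prove this proposition --- it is quoted as Theorem 6.2.3 of the cited reference --- but the argument it later unfolds step by step (in the proof of Lemma \ref{r99}) reveals that approach: a staged normalization consisting of a linear change of coordinates, then a substitution $\varphi_{1}(z)=(z_{1}+\sum_{k}\lambda^{-(k+1)}A(f^{k}(z_{1})),z_{2})$ linearizing the first coordinate, then an infinite product $\prod(1+B(F^{n}(z)))$ bringing the map to the triangular form $(z_{1},z_{2})\mapsto(\lambda z_{1},\mu z_{2}+h(z_{1}))$, and finally the elimination of $h$ by a series with coefficients $q_{j}=h_{j}/(\mu-\lambda^{j})$, which is where the non-resonances $\mu\neq\lambda^{j}$ enter explicitly. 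You instead pass to the inverse germ $G=F^{-1}$ and run the classical rescaled-iterate scheme $\varphi_{n}=\Lambda^{-n}\circ G^{n}$; your telescoping estimate is right ($\|\Lambda^{-(n+1)}\|=|\mu'|^{-(n+1)}$ since $\mu'$ is the eigenvalue of smaller modulus, and $\|G^{n}(z)\|\le\rho^{n}\|z\|$ after shrinking the domain), and convergence is governed by the single spectral-gap inequality $\rho^{2}<|\mu'|$, which is exactly the hypothesis $|\lambda|^{2}>|\mu|$. Your route is shorter and more self-contained, and it bypasses any case-by-case treatment of resonances because the narrow-spectrum condition $\|\Lambda\|^{2}\,\|\Lambda^{-1}\|<1$ makes the naive iteration converge outright; you also correctly flag the tacit diagonalizability assumption when $\lambda=\mu$, without which the statement would fail. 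What the paper's staged construction buys --- and the reason it is reproduced there --- is that each step is an explicit normally convergent series or product whose terms visibly depend continuously on the map, which is what Lemma \ref{r99} needs to conclude that $\varphi_{f_{\varepsilon}}$ varies continuously with $\varepsilon$; your $\varphi=\lim_{n}\Lambda^{-n}\circ G^{n}$ would in fact yield the same continuity (the geometric convergence is locally uniform in the map), but that extra conclusion is not part of the statement you were asked to prove, so no gap results.
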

The following lemma will be used to compare $\varphi_{f_{\epsilon}}$ and $\Pi$ : 
\begin{lem3} \label{rlin} Let $F$ be an invertible map in a neighborhood of 0 with a repelling fixed point at 0. Let us denote the eigenvalues of  $DF_{0}$ by $\lambda,\mu$. Let us suppose that  $\varphi_{1}$ and $\varphi_{2}$ are two holomorphic maps conjugating $F$ to $\mathrm{Diag}_{\lambda,\mu} $. Then $\varphi_{1} \circ \varphi_{2}^{-1}$ is linear.

\end{lem3}
\begin{proof}
Let us write $\chi = \varphi_{1} \circ \varphi_{2}^{-1}=(\chi^{1},\chi^{2})$ and $\chi^{j}(z)= \sum_{k \ge 1}\chi^{j}_{k} \cdot  z^{k}$ where $z^{k} = z_{1}^{k_{1}} \cdot z_{2}^{k_{2}}$ and $j \in \{1,2\}$. We have that $\chi$ commutes with $\mathrm{Diag}_{\lambda,\mu}$. Then : $$\lambda \cdot  \sum_{|k| \ge 1}\chi^{1}_{k} \cdot  z^{k} = \sum_{|k| \ge 1}\chi^{1}_{k} \cdot (\text{Diag}_{\lambda,\mu}  (z))^{k} \text{   and   }    \mu \cdot  \sum_{|k| \ge 1}\chi^{2}_{k} \cdot  z^{k} = \sum_{|k| \ge 1}\chi^{2}_{k} \cdot (\mathrm{Diag}_{\lambda,\mu} (z))^{k}      $$ In particular, since $ \lambda , \mu \neq 1$ this implies that $\chi^{1}_{k}=\chi^{2}_{k} = 0$ for every $|k| >1$. \qedhere 
 
\end{proof}  We now prove Lemma \ref{r99}.
\begin{proof}[Proof of Lemma \ref{r99}] We take a neighborhood $\mathcal{V}$ of 0 in $\mathbb{D}^{3}$ such that for every $\varepsilon \in \mathcal{V}$ we have that $|\lambda(\varepsilon)|^{2}<|\mu(\varepsilon)|$ and $|\mu(\varepsilon)|^{2}<|\lambda(\varepsilon)|$. It is a consequence of Theorem 6.2.3 of \cite{jp7} (this result goes back to Poincar\'e) that for every $\varepsilon \in \mathcal{V}$, $f_{\varepsilon}$ is holomorphically linearizable at 0 in some neighborhood $\mathcal{U}'_{\varepsilon}$ of 0. We show here that the linearizing map $\varphi_{f_{\varepsilon}}$ varies continuously with $\varepsilon$ in the $C^{0}$ topology. This will imply in particular that the neighborhood $\mathcal{U}'_{\varepsilon}$ can be taken uniform $\mathcal{U}'$ in $\varepsilon$. \newline \newline For this we follow the proof of Theorem 6.2.3 of \cite{jp7} and its notations (we just replace the $A$ of the original proof by $C$ to avoid confusion with the linear part of the Latt\`es map). The proof is divided into 3 steps. \newline \newline The first step itself is divided into two steps. The first one is a linear change of coordinates that we will denote by $\varphi_{lin}$ which locally conjugates $f_{\epsilon}$ to $(z_{1},z_{2}) \rightarrow (\lambda \cdot z_{1}+ \ldots , \mu \cdot z_{2} + \nu \cdot z_{1} + \ldots ) = (\lambda \cdot z_{1}+ C(z) , \mu \cdot z_{2} + \nu \cdot z_{1} + \ldots )$. $\varphi_{lin}$ is not unique but it becomes unique if $w_{1}$ is sent on $e_{1}$ and $w_{2}$ is sent on $e_{2}$. Thus this map $\varphi_{lin} = \varphi_{lin}(f_{\varepsilon})$ is uniquely defined and varies continuously in the $C^{0}$ topology. The second one is a change of coordinates that we will denote $\varphi_{1}(z) = (\varphi^{1}_{1}(z_{1}),z_{2})$ such that $\varphi^{1}_{1}(z_{1}) = z_{1}+ \sum_{k = 0}^{+ \infty} \frac{1}{\lambda^{k+1}} \cdot C(f_{\varepsilon}^{k}(z_{1}))$. There exists a constant $K$ such that $ |C(z_{1}) | \le K  |z_{1} |^{2}$ for every map $f_{\varepsilon}$ with $\varepsilon \in \mathcal{V} $ (reducing $\mathcal{V}$ if necessary). Since $\varphi^{1}_{1}$ is the sum of a normally convergent series whose terms all vary continuously, $\varphi^{1}_{1}$ and then $\varphi_{1} \circ \varphi_{lin}$ vary continuously in the $C^{0}$ topology. After these two changes of coordinates, $f_{\varepsilon}$ is reduced to the form $(z_{1},z_{2}) \mapsto (\lambda \cdot z_{1},g(z_{1},z_{2}))$ with $g$ varying continuously. 
\newline \newline In the second step, one defines some infinite product $\gamma(z) =  \prod_{k = 0}^{+ \infty} (1+B(F^{n}(z))$ where $\frac{\partial g}{\partial z_{2}} (z) = \mu(1+B(z))$. We have $ |B(z) | \le K'  |z |^{2}$ and reducing $\mathcal{V}$ if necessary, we can suppose this estimate is true for every $\varepsilon \in \mathcal{V}$. Then $\gamma$ is normally convergent and varies continuously. The map $\psi$ such that $\frac{\partial \psi}{\partial z_{2}} = \gamma$ then still varies continuously, just as $\varphi_{2}(z) = (z_{1},\psi(z))$. After this third change of coordinates, $f_{\varepsilon}$ is reduced to the form $(z_{1},z_{2}) \mapsto (\lambda \cdot z_{1}, \mu \cdot z_{2} + h(z_{1})  )$ with $h$ varying continuously. \newline \newline Finally, the last change of coordinates $\varphi_{3}$ is of the form $(z_{1},\eta(z))$ with $\eta(z) = z_{2}+ q(z_{1})$ and $q(z_{1} ) =q_{1} \cdot z_{1} +q_{2} \cdot z_{1}^{2}+...$ with $q_{j} = \frac{h_{j}}{\mu-\lambda^{j}}$ for each $j \ge 2$. Since $h$ varies continuously, so do the coefficients $q_{j}$ for $j \ge 2$. When $\lambda = \lambda(\epsilon) \neq \mu(\epsilon) = \mu$, we set $q_{1} = \frac{h_{1}}{\mu-\lambda}$. If $\lambda = \lambda(\epsilon) = \mu(\epsilon) = \mu$, we have $h_{1} = 0$ because $D(f_{\varepsilon})_{0}$ is diagonalizable (see \cite{jp7}, p213). Then we can extend $q_{1}$ by continuity and by Lemma \ref{rlin}, one gets the only value of $q_{1}$ for which $w_{1}$ is sent on $e_{1}$ and $w_{2}$ on $e_{2}$ under $D(\varphi_{3} \circ \varphi_{2} \circ \varphi_{1} \circ \varphi_{lin})_{0}$. Finally, $q_{1}$, $q$, $\eta$ and $\varphi_{3}$ vary continuously. Setting $\varphi_{f_{\epsilon}} = \varphi_{3} \circ \varphi_{2} \circ \varphi_{1} \circ \varphi_{lin}$, $f_{\varepsilon}$ is holomorphically linearizable by $\varphi_{f_{\varepsilon}}$ for every $\varepsilon$ (because $D(f_{\varepsilon})_{0}$ is diagonalizable) and $\varphi_{f_{\varepsilon}}$ varies continuously in the $C^{0}$ topology. 
\end{proof}

Remind that $c$ is a point of $\mathbb{P}^{2}(\mathbb{C})$ which was defined in 4.3.17 and that the notation $p(L'')$ was introduced in Notation \ref{notf}. In the following lemma, we perturb $L'=L'_{\varepsilon_{1}}$ into $L''=L''_{\varepsilon_{1},\varepsilon_{2}}$ to ensure that the critical point $c$ is not singular.

\begin{lem} \label{r933} There exists a holomorphic family of holomorphic maps of $\mathbb{P}^{2}(\mathbb{C})$ denoted by  $(L''_{\varepsilon_{1},\varepsilon_{2}})_{(\varepsilon_{1},\varepsilon_{2}) \in \mathbb{D}^{2}}$ such that :

\begin{enumerate}
\item for every $\varepsilon_{1} \in \mathbb{D}$, $L''_{\varepsilon_{1},0} = L'_{\varepsilon_{1}}$
\item $p_{c}$ is in the postcritical set of $L''_{\varepsilon_{1},\varepsilon_{2}}$ and \emph{the postcritical set is not singular at $p_{c}$ for $\varepsilon_{2} \neq 0$}
\item  $p(L''_{\varepsilon_{1},\varepsilon_{2}}) =p_{c}$ is periodic for $L''_{\varepsilon_{1},\varepsilon_{2}}$ and is in the forward orbit of $c$ : $p_{c} = (L''_{\varepsilon_{1},\varepsilon_{2}})^{n_{c}}(c)$ and $(L''_{\varepsilon_{1},\varepsilon_{2}})^{n_{pc}}(p_{c}) = p_{c}$
\item  $D( (L''_{\varepsilon_{1},\varepsilon_{2}})^{n_{pc}})_{p_{c}} = D(L^{n_{pc}})_{p_{c}}$

\end{enumerate}

\end{lem}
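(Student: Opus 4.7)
The plan is to mirror Proposition~\ref{r71777} and introduce a second holomorphic parameter $\varepsilon_2$ that alters the $2$-jet of the map along the preperiodic critical orbit while leaving both the orbit and its $1$-jet untouched. Concretely, in the affine chart of 4.2.3 I would set
\begin{equation*}
P''_j(z_1,z_2) \;=\; P'_j(z_1,z_2) + \varepsilon_2\, \tilde h(z_1,z_2)\, P_3(z_1,z_2)\, \widetilde H_j(z_1,z_2), \qquad j=1,2,
\end{equation*}
where $\tilde h$ is a rational function of the same shape as the $h$ of 4.3.18 (its numerator contains the factor $\prod_{1\le j\le n_c+n_{pc}-1} Q_j^2$, its denominator divides $P_3$), and $\widetilde H_1,\widetilde H_2$ are polynomials of controlled degree to be chosen in the last step. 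The squared factors $Q_j^2$ force the perturbation to vanish to order at least two at every point of the orbit $c,L(c),\ldots,L^{n_{pc}-1}(p_c)$, so items (3) and (4) follow exactly as in Proposition~\ref{r71777}: the orbit is preserved and every first differential along it, and hence $D((L''_{\varepsilon_1,\varepsilon_2})^{n_{pc}})_{p_c}$ via the chain rule, coincides with the corresponding differential of $L$. Item (1) is built into the formula. The fact that $L''$ extends to a holomorphic endomorphism of $\mathbb{P}^2(\mathbb{C})$ of the same degree as $L$ is checked by the same degree-counting argument as before, relying on the splitting of $P_3$ into many small factors supplied by Proposition~\ref{r35}.

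The content of item (2) is the claim that this perturbation makes $\mathrm{PCrit}(L'')$ carry a smooth local branch through $p_c$. The key observation is that a perturbation vanishing \emph{exactly} to order two at $c$ still modifies the $2$-jet of the map at $c$, and the $2$-jet controls simultaneously two quantities: the tangent line of $\mathrm{Crit}(L'')=\{\det DL''=0\}$ at $c$ (which is read off from the $1$-jet of $\det DL''$) and the quadratic expansion of $L''$ transverse to that tangent line. I would choose the values of $\widetilde H_1,\widetilde H_2$ at the orbit points so that, for every nonzero $\varepsilon_2$, the restriction of $D((L'')^{n_c})_c$ to $T_c\mathrm{Crit}(L'')$ is nonzero; because the orbit and every first differential along it are already preserved, this reduces to a single linear nondegeneracy condition on $\widetilde H_j$ at $c$, which is Zariski-open in the $\widetilde H_j$ and, after a holomorphic rescaling of $\varepsilon_2$ if needed, open in $\varepsilon_2$. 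Once $(L'')^{n_c}$ immerses $\mathrm{Crit}(L'')$ into $\mathbb{P}^2(\mathbb{C})$ near $c$, its image is a smooth local branch of $\mathrm{PCrit}(L'')$ through $p_c$, which is the non-singularity claimed.

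The main obstacle is the interpolation that produces $\widetilde H_1,\widetilde H_2$ of controlled degree realizing the prescribed $2$-jet contributions at each of the $n_c+n_{pc}$ orbit points while keeping $\deg(\tilde h\cdot P_3\cdot\widetilde H_j)\le \deg P_j$. This is again a bookkeeping argument analogous to the one used for $L'$: the squared factors $Q_j^2$ carry all the required vanishing, and the splitting of $P_3$ provided by Proposition~\ref{r35} together with the inflated constant $n_H=E((m+2K)/i)+1$ from 4.2.13 leaves enough degree room for the $\widetilde H_j$, provided $d'\ge d^2=d_{n_H+100}$. The constants of subsection~4.2 were deliberately calibrated to leave slack for this second perturbation (and for the third one of Lemma~4.5.11), so verifying that the slack indeed suffices here is the only substantial routine step in the proof.
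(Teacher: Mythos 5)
Your items (1), (3) and (4) are fine and follow the paper's mechanism (a perturbation divisible by the squared linear forms along the orbit, with the degree bookkeeping supplied by Proposition \ref{r35}). The problem is item (2): by reusing a numerator containing $\prod_{1\le j\le n_c+n_{pc}-1}Q_j^2$, your perturbation vanishes to order two at $c$ itself (since $Q_1(c)=0$), with quadratic part proportional to the \emph{fixed} rank-one form $Q_1^2$. Hence at $c$ you cannot touch $DL''_c$ at all, and the only freedom you retain in the relevant $2$-jet is the pair of values $\widetilde H_1(c),\widetilde H_2(c)$ multiplying $Q_1^2$. A direct computation of $\nabla(\mathrm{Jac}\,P'')(c)$ shows that the induced change of this gradient is exactly $2\varepsilon_2\bigl(u_1(c)A_1-u_2(c)A_2\bigr)\,\nabla Q_1(c)$, where $u_j(c)$ are your adjustable scalars and $A_1,A_2$ are fixed combinations of the first derivatives of $P'_1,P'_2$ at $c$. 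So the reachable modifications are confined to the single direction $\nabla Q_1(c)$, and they vanish identically whenever the direction of the line $\{Q_1=0\}$ lies in $\ker D(P'_1,P'_2)(c)$ --- a coincidence nothing in the earlier construction excludes, since $Q_1$ was fixed in 4.3.17 subject only to $Q_1(c)=0$. In that degenerate case no choice of $\widetilde H_1,\widetilde H_2$ makes the critical set smooth at $c$, and even when the gradient can be made nonzero the resulting tangent line of $\mathrm{Crit}(L'')$ at $c$ is forced to be $\ker\nabla Q_1(c)$ (or cannot be rotated away from a bad position), so your immersivity condition ``$D((L'')^{n_c})_c$ nonzero on $T_c\mathrm{Crit}(L'')$'' may be unachievable. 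The claim that everything reduces to ``a single linear nondegeneracy condition, Zariski-open in the $\widetilde H_j$'' is therefore not justified: the family of jets you can reach is too small.

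This is precisely why the paper's proof does something different at $c$: it builds $\gamma_1$ from linear forms $R_k$ chosen so that $R_k(c)\neq 0$ (so $\gamma_1(c)\neq 0$), moves $c$ to the origin, and multiplies by the monomials $x_1^2$, respectively $x_1$ and $x_1x_2$, with a two-case discussion according to whether $\frac{\partial P'_2}{\partial x_2}(c)$ vanishes; in the second case the perturbation genuinely changes $D_cL''$ while keeping $c$ critical. This gives enough freedom on the $1$- and $2$-jet at $c$ to force $\frac{\partial}{\partial x_1}(\mathrm{Jac}\,P'')(c)\neq 0$ for $\varepsilon_2\neq 0$, which is what your ansatz cannot guarantee. (Your final step --- deducing smoothness of the branch of $\mathrm{PCrit}$ at $p_c$ from smoothness of $\mathrm{Crit}$ at $c$ plus immersivity of $(L'')^{n_c}$ along it --- is in itself a clean sufficient criterion, arguably more careful than the paper's concluding sentence; but to salvage the proposal you would need either to re-choose the vanishing factors so that the one at $c$ is dropped, as the paper does, or to add a perturbation term supported at $c$ with an adjustable quadratic (and possibly linear) part not collinear with $Q_1^2$.)
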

\begin{proof}  We first make an invertible linear change of coordinates so that in the new coordinates $[x_{1},x_{2},x_{3}]$, the point $c$ is equal to $[0,0,1]$ and the point $L(c)$ is in the chart $\{x_{3} \neq 0\}$. We choose homogenous polynomials of degree 1 in the variables $x_{1},x_{2},x_{3}$ denoted by $R_{2}, \ldots ,R_{n_{c}+n_{pc}-1}$ such that :  \begin{equation} \label{eq799} R_{2}(L(c)) =  \cdots = R_{n_{c}-1}(L^{n_{c}-1}(c)) = R_{n_{c}}(p_{c}) = \cdots = R_{n_{c}+n_{pc}-1}(L^{n_{pc}-1}(p_{c})) = 0 \end{equation} \begin{equation} \label{eq795} R_{2}(c)  \neq 0 , \cdots ,  R_{n_{c}-1}(c) \neq 0 \text{  ,  } R_{n_{c}}(c) \neq 0 , \cdots, R_{n_{c}+n_{pc}-1}(c)  \neq 0 \end{equation} This is possible since $c, \ldots ,L^{n_{pc}-1}(p_{c})$ are distinct. We denote :  $$\gamma_{1}(x_{1}  ,x_{2}) =   \prod _{2 \le k \le n_{c}+n_{pc}-1}  (R_{k}(x_{1},x_{2},1 ))^{2} $$ In $\{x_{3} \neq 0\}$, the critical set of $L'$ is the set $\{  \text{Jac}(P')  =     0 \}$ where $\text{Jac}(P')$ is equal to $\frac{\partial P'_{1}}{\partial x_{1}} \cdot \frac{\partial P'_{2}}{\partial x_{2}}- \frac{\partial P'_{1}}{\partial x_{2}} \cdot \frac{\partial P'_{2}}{\partial x_{1}}$. The critical set at $c$ is not singular if the gradient of the map $(x_{1},x_{2}) \mapsto {\frac{\partial P'_{1}}{\partial x_{1}} }\cdot \frac{\partial P'_{2}}{\partial x_{2}}- \frac{\partial P'_{1}}{\partial x_{2}} \cdot \frac{\partial P'_{2}}{\partial x_{1}}  $ is non zero at $c$, in particular if : $$\frac{ \partial} {\partial x_{1}} ( \text{Jac}(P')) (c) =  {\frac{\partial^{2} P'_{1}}{\partial x_{1}^{2}} \cdot \frac{\partial P'_{2}}{\partial x_{2}} +\frac{\partial P'_{1}}{\partial x_{1}} \cdot \frac{\partial^{2} P'_{2}}{\partial x_{1} \partial x_{2} } - \frac{\partial^{2} P'_{1}}{\partial x_{1} \partial x_{2}} \cdot \frac{\partial P'_{2}}{\partial x_{1}}  - \frac{\partial P'_{1}}{\partial x_{2}} \cdot \frac{\partial^{2} P'_{2}}{\partial x_{1}^{2}}} \neq 0$$  
If this is the case, there is nothing to do and we can take $L'' = L'$. Let us suppose this is not so. We distinguish two cases. \newline \newline  First case : we suppose that $\frac{\partial P'_{2}}{\partial x_{2}} \neq 0$. For every $\varepsilon_{2} \in \mathbb{C}$ we consider the following perturbation of $L'$ defined by $L''_{\varepsilon_{1},\varepsilon_{2}}= L'' = (\frac{ P''_{1}}{P''_{3}} , \frac{P''_{2}}{P''_{3}})$ with $P''_{2} = P'_{2}$, $P''_{3} = P'_{3}$ and : 
$$P''_{1}(x_{1},x_{2}) = P'_{1}(x_{1},x_{2})+ \varepsilon_{2} \cdot  \gamma_{1}(x_{1},x_{2}) \cdot x_{1}^{2}$$
Because of the choice of the degree $d$ in 4.2.13 and 4.2.14, we have $\text{deg}(P''_{1}) \le \text{deg}(P'_{1})$. Since the property of being a holomorphic mapping is open, $L''$ is a holomorphic mapping on $\mathbb{P}^{2}(\mathbb{C})$ for sufficiently small values of $\varepsilon_{2}$. Then, item 1 is obvious. Because of the quadratic terms $R_{k}^{2}$ in the definition of $\gamma_{1}$, $c$ stays preperiodic (with the same periodic orbit $p(L''_{\varepsilon_{1},\varepsilon_{2}}) =p_{c}$) for $L''_{\varepsilon_{1},\varepsilon_{2}}$, this implies item 3. Still because of the quadratic terms $R_{k}^{2}$ in $\gamma_{1}$ we have that $D_{L(c)}(L''_{\varepsilon_{1},\varepsilon_{2}})=D_{L(c)}(L'_{\varepsilon_{1}}) = D_{L(c)} L$, $\cdots$ , $D_{(L)^{n_{pc}-1}(p_{c})} (L''_{\varepsilon_{1},\varepsilon_{2}})  =D_{(L)^{n_{pc}-1}(p_{c})} (L'_{\varepsilon_{1}}) =  D_{(L)^{n_{pc}-1}(p_{c})}L$, so we both have that $p_{c}$ is in the postcritical set of $L''_{\varepsilon_{1},\varepsilon_{2}}$ and that item 4 is true. Moreover we have $D_{c}L'' = D_{c}L$ so $c$ is still critical. The only second order partial derivative which depends on $\varepsilon_{2}$ is : $\frac{\partial^{2} P''_{1}}{\partial x_{1}^{2}}(c) = \frac{\partial^{2} P'_{1}}{\partial x_{1}^{2}}(c) + 2 \cdot \gamma_{1}(c) \cdot \varepsilon_{2} $ with $\gamma_{1}(c) \neq 0$. Then the map $\varepsilon_{2} \mapsto \frac{ \partial} {\partial x_{1}} ( \text{Jac}(P''))(c)$ is an affine map in $\varepsilon_{2}$ of non zero coefficient equal to  $2 \cdot \gamma_{1}(c) \cdot \frac{\partial P'_{2}}{\partial x_{2}}$. Then, it is non zero for $\varepsilon_{2} \in \mathbb{D}^{*}$. This implies that the critical set is not singular at $c$. Then there is a component of the critical set at $c$ which is not singular. Since $DL_{c}$, $\cdots$ ,$DL_{L^{n_{pc}-1}(p_{c})}$ are not singular, there is a component of the postcritical set at $p_{c}$ which is not singular. Thus item 2 is true. \newline \newline 
Second case :  we suppose that $\frac{\partial P'_{2}}{\partial x_{2}} = 0$. For every $\varepsilon_{2} \in \mathbb{C}$ we consider the following perturbation of $L'$ defined by $L''_{\varepsilon_{1},\varepsilon_{2}}= L'' = (\frac{ P''_{1}}{P''_{3}} , \frac{P''_{2}}{P''_{3}})$ with $P''_{3} = P'_{3}$ and : 
$$P''_{1}(x_{1},x_{2}) = P'_{1}(x_{1},x_{2})+ \varepsilon_{2} \cdot \gamma_{1}(x_{1},x_{2}) \cdot x_{1}$$
$$P''_{2}(x_{1},x_{2}) = P'_{2}(x_{1},x_{2})+ \varepsilon_{2} \cdot \gamma_{1}(x_{1},x_{2}) \cdot x_{1}x_{2}$$
Because of the choice of the degree $d$ in 4.2.13 and 4.2.14, we have $\text{deg}(P''_{1}) \le \text{deg}(P'_{1})$ and $\text{deg}(P''_{2}) \le \text{deg}(P'_{2})$. Since the property of being a holomorphic mapping is open, $L''$ is a holomorphic mapping for sufficiently small values of $\epsilon_{2}$. Then, item 1 is obvious. As in the first case, items 3 and 4 are true and $p_{c}$ stays postcritical. We have : 
$$D_{c}L'' = D_{c}L'+ \varepsilon_{2}  \cdot \gamma_{1}(c) \cdot  \begin{pmatrix} 1 & 0 \\  0 & 0 \end{pmatrix}$$  Since at the point $c$, we have both $\frac{\partial P'_{2}}{\partial x_{2}} = 0$ and $  \text{Jac}(P') (c)  = \frac{\partial P'_{1}}{\partial x_{1}} \cdot \frac{\partial P'_{2}}{\partial x_{2}}- \frac{\partial P'_{1}}{\partial x_{2}} \cdot \frac{\partial P'_{2}}{\partial x_{1}}=0$, this implies that we still have $\text{Jac}(P'') (c) = 0$ and the point $c$ is still critical. The only second order partial derivative which depends on $\varepsilon_{2}$ is : $\frac{\partial^{2} P''_{2}}{\partial x_{1} \partial x_{2}}(c) = \frac{\partial^{2} P'_{2}}{\partial x_{1} \partial x_{2}}(c) +  \gamma_{1}(c) \cdot \varepsilon_{2} $ with $\gamma_{1}(c) \neq 0$. Then the map $\varepsilon_{2} \mapsto \frac{ \partial} {\partial x_{1}} ( \text{Jac}(P''))(c)$ is a polynomial of degree 2 in $\varepsilon_{2}$ of non zero coefficient of degree 2 equal to $(\gamma_{1}(c))^{2}$. Then, rescaling if necessary, it is non zero for $\varepsilon_{2} \in \mathbb{D}^{*}$. As in case 1, we conclude that item 2 is satisfied. This concludes the proof of the proposition.  \qedhere

\end{proof}

Remind that $w_{1}$ and $w_{2}$ were defined just at the beginning of this subsection. The notation $p(L'')$ was introduced in Notation \ref{notf}. In the following lemma, we perturb the periodic orbit $p_{c}$ in such a way that we can choose the two eigenvalues at this periodic point .

\begin{lem} \label{r93} There exists a holomorphic family of holomorphic maps of $\mathbb{P}^{2}(\mathbb{C})$ denoted by  $(L'''_{\varepsilon_{1},\varepsilon_{2},\varepsilon_{3}})_{(\varepsilon_{1},\varepsilon_{2},\varepsilon_{3}) \in \mathbb{D}^{3}}$ such that :

\begin{enumerate}

\item for every $\varepsilon_{1},\varepsilon_{2} \in \mathbb{D}$, $L'''_{\varepsilon_{1},\varepsilon_{2},0} = L''_{\varepsilon_{1},\varepsilon_{2}}$
\item  $p(L''') =p_{c}$ is periodic for $L'''$ ($(L''')^{n_{pc}}(p_{c}) = p_{c}$) and is in the postcritical set of $L'''$ ($p_{c} = (L''')^{n_{c}}(c)$)
\item   if $\varepsilon_{3} > 0$, then the eigenvalues $\lambda,\mu$ of $D(L''')^{n_{p_{c}}}$ at $p_{c}$ are such that : $|\mu|^{2}>|\lambda| > |\mu|$ \item  the eigenvector $w_{\mu}$ associated to $\mu$ at $p_{c}$ is equal to $D\Pi_{\mathfrak{pc}}(w_{2})$ and then transverse to the postcritical set at $p_{c}$ 
\item the eigenvector $w_{\lambda}$ associated to $\lambda$ at $p_{c}$ is equal to $D\Pi_{\mathfrak{pc}}(w_{1})$

\end{enumerate}

\end{lem}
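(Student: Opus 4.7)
The plan is to define $L'''_{\varepsilon_{1},\varepsilon_{2},\varepsilon_{3}}$ as an explicit polynomial perturbation of $L''_{\varepsilon_{1},\varepsilon_{2}}$ that leaves the whole preperiodic orbit $c,L(c),\ldots,L^{n_{pc}-1}(p_{c})$ invariant, keeps the differential unchanged at every orbit point \emph{except} $p_{c}$, and perturbs $DL'''_{p_{c}}$ in a way parametrized by four complex coefficients. After a linear change of affine coordinates sending $p_{c}$ to the origin in the chart $\{z_{3}\neq 0\}$, I would take
\begin{equation*}
P'''_{i}\;=\;P''_{i}\;+\;\varepsilon_{3}\cdot\Bigl(\prod_{j\neq n_{c}}Q_{j}(x_{1},x_{2},1)^{2}\Bigr)\bigl(\alpha_{i,1}\,x_{1}+\alpha_{i,2}\,x_{2}\bigr),\qquad i=1,2,
\end{equation*}
where $\alpha_{i,1},\alpha_{i,2}\in\mathbb{C}$ are free parameters and $Q_{1},\ldots,Q_{n_{c}+n_{pc}-1}$ are the linear forms fixed in 4.3.17. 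The added term has affine degree $2(n_{c}+n_{pc}-2)+1\le 2K-3<d'$, using Proposition \ref{r41} and 4.2.13--14, so its homogeneous lift remains in $\text{Hol}_{d'}$; for $\varepsilon_{3}$ small, $L'''$ stays a holomorphic map by openness (the denominator $\overline{P}_{3}$ is untouched), and holomorphic dependence on $(\varepsilon_{1},\varepsilon_{2},\varepsilon_{3})\in\mathbb{D}^{3}$ is manifest, giving item~1.

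The product $\prod_{j\neq n_{c}}Q_{j}^{2}$ vanishes to order $2$ at every orbit point of $c$ except $p_{c}$, so $L'''=L''$ and $DL'''=DL''$ at each $L^{k}(c)$ with $k\neq n_{c}$; meanwhile $\alpha_{i,1}x_{1}+\alpha_{i,2}x_{2}$ vanishes at the origin, so $L'''(p_{c})=L''(p_{c})=L(p_{c})$. This gives item~2: the preperiodic orbit of $c$ and the periodicity of $p_{c}$ are preserved for every $\varepsilon_{3}$. For items 3--5, I would compute the differential of the $n_{pc}$-th iterate at $p_{c}$: since the correction is of order $2$ at every factor of the composition except the one at $p_{c}$,
\begin{equation*}
D((L''')^{n_{pc}})_{p_{c}}\;=\;M\cdot\bigl(DL_{p_{c}}+\varepsilon_{3}\,DR_{p_{c}}\bigr),\qquad M:=DL_{L^{n_{pc}-1}(p_{c})}\circ\cdots\circ DL_{L(p_{c})}.
\end{equation*}
Because $n_{pc}$ is a multiple of $\text{ord}_{\text{Latt\`es}}$ by Proposition \ref{r41}, $A^{n_{pc}}=I_{2}$, whence the unperturbed product equals $M\cdot DL_{p_{c}}=a^{n_{pc}}\,I_{2}$, and the analysis reduces to $D((L''')^{n_{pc}})_{p_{c}}=a^{n_{pc}}I_{2}+\varepsilon_{3}\,M\,DR_{p_{c}}$.

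Direct differentiation gives $DR_{p_{c}}=\bigl(\prod_{j\neq n_{c}}Q_{j}(p_{c})\bigr)^{2}\cdot(\alpha_{i,j})_{i,j}$, a nonzero scalar multiple of the matrix of free parameters, so $M\,DR_{p_{c}}$ ranges over all of $\mathrm{Mat}_{2}(\mathbb{C})$ as $(\alpha_{i,j})$ varies over $\mathbb{C}^{4}$. I then pick $(\alpha_{i,j})$ so that $M\,DR_{p_{c}}$ is diagonal in the basis $\bigl(D\Pi_{\mathfrak{pc}}(w_{1}),\,D\Pi_{\mathfrak{pc}}(w_{2})\bigr)$ with diagonal entries $(s_{\lambda},s_{\mu})$ where $s_{\lambda}>0>s_{\mu}$ are real. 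Since $a^{n_{pc}}I_{2}$ is scalar, the eigenvectors of the sum coincide exactly with those of the perturbation, yielding items~4 and~5. The eigenvalues are $\lambda=a^{n_{pc}}(1+\varepsilon_{3}s_{\lambda})$ and $\mu=a^{n_{pc}}(1+\varepsilon_{3}s_{\mu})$; for $\varepsilon_{3}>0$ small, $|1+\varepsilon_{3}s_{\lambda}|>1>|1+\varepsilon_{3}s_{\mu}|$ gives $|\lambda|>|\mu|$, and the estimate $|\mu|^{2}/|\lambda|\approx|a|^{n_{pc}}\ge 2$ from 4.2.12 gives $|\mu|^{2}>|\lambda|$, which is item~3. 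The main obstacle is to make the map $(\alpha_{i,j})\mapsto DR_{p_{c}}$ surjective onto $\mathrm{Mat}_{2}(\mathbb{C})$: a single-direction correction $\gamma_{2}\cdot\alpha_{i}$ with $\gamma_{2}$ vanishing to order $1$ at $p_{c}$ would only give a rank-one perturbation and hence a one-parameter family of candidate eigenvectors, insufficient to prescribe both $w_{\lambda}$ and $w_{\mu}$ simultaneously; the bi-variate form above exploits both independent coordinate directions at $p_{c}=(0,0)$ to secure full rank.
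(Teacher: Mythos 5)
Your construction is essentially the paper's: the same shape of perturbation, namely $\varepsilon_{3}$ times a product of squared linear forms vanishing along the orbit of $c$ away from $p_{c}$, times a linear form vanishing at $p_{c}$ with four free coefficients; the same use of the fact that $n_{pc}$ is a multiple of $\mathrm{ord}(A)$ so that the unperturbed cycle derivative is $a^{n_{pc}}I_{2}$; and the same choice of the coefficient matrix so that $D((L''')^{n_{pc}})_{p_{c}}$ becomes diagonal in the basis $\bigl(D\Pi_{\mathfrak{pc}}(w_{1}),D\Pi_{\mathfrak{pc}}(w_{2})\bigr)$, with the estimate $|a|^{n_{pc}}\ge 2$ giving item 3. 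One step, however, needs patching: you reuse the forms $Q_{j}$ fixed in 4.3.17 and assert that $\prod_{j\neq n_{c}}Q_{j}(p_{c})$ is nonzero, but 4.3.17 only requires each $Q_{j}$ to vanish at its assigned orbit point (and to have a nonzero $z_{1}$- or $z_{2}$-coefficient); nothing prevents some $Q_{j}$ with $j\neq n_{c}$ from also vanishing at $p_{c}$. In that case your scalar $\gamma(p_{c})$ is zero, the first-order change of $D(L''')_{p_{c}}$ vanishes identically, and items 3--5 fail. The paper sidesteps exactly this by choosing \emph{fresh} degree-one forms $S_{j}$, one per orbit point other than $p_{c}$, subject to the explicit extra condition that none of them vanishes at $p_{c}$; you should do the same, and in doing so also make sure you really have one form per orbit point other than $p_{c}$ (your product over $j\neq n_{c}$ has $n_{c}+n_{pc}-2$ factors for the $n_{c}+n_{pc}-1$ points to be frozen, so under the indexing of 4.3.17 at least one orbit point is not a zero of your correction, which would break item 2). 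With that repair the rest of your argument is sound and matches the paper: the degree count via $n_{c}+n_{pc}\le K$ and $d\ge 2K$, the omitted scalar factor coming from $P'''_{3}(p_{c})$ being harmless for surjectivity onto $\mathrm{Mat}_{2}(\mathbb{C})$, and the eigenvalue/eigenvector conclusion since $a^{n_{pc}}I_{2}$ is scalar.
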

\begin{proof} 
We first make an invertible linear change of coordinates so that in the new coordinates $[y_{1},y_{2},y_{3}]$, the point $p_{c}$ is equal to $[0,0,1]$ and the point $L(p_{c})$ is in the chart $\{y_{3} \neq 0\}$. We choose homogenous polynomials of degree 1 in the variables $y_{1},y_{2},y_{3}$ denoted by $S_{1}, \ldots ,S_{n_{c}-1},S_{n_{c}+1}, \dots ,S_{n_{c}+n_{pc}-1}$ such that :  \begin{equation} \label{eq73} S_{1}(c) = S_{2}(L(c))= \cdots = S_{n_{c}-1}(L^{n_{c}-1}(c)) =  S_{n_{c}+1}(L(p_{c})) = \cdots = S_{n_{c}+n_{pc}-1}(L^{n_{pc}-1}(p_{c})) = 0 \end{equation} \begin{equation} \label{eq75} S_{1}(p_{c})  \neq 0 \text{ , }S_{2}(p_{c})  \neq 0, \cdots, S_{n_{c}-1}(p_{c})  \neq 0 \text{ , }  S_{n_{c}+1}(p_{c})  \neq 0, \cdots ,S_{n_{c}+n_{pc}-1}(p_{c}) \neq 0 \end{equation} This is possible since $c, \ldots,L^{n_{pc}-1}(p_{c})$ are distinct. We denote :  $$\gamma_{2}(y_{1},y_{2}) =     \prod _{ j \neq n_{c}}  (S_{j}(y_{1},y_{2},1))^{2}$$ For every $\varepsilon_{3},\kappa_{i} \in \mathbb{C}$ we consider the following perturbation of $L''$ defined by $L'''_{\varepsilon_{1},\varepsilon_{2},\varepsilon_{3}}= L''' = (\frac{ P'''_{1}}{P'''_{3}} , \frac{P'''_{2}}{P'''_{3}}) $ with $P'''_{3} = P''_{3}$ and : 
$$P'''_{1}(y_{1},y_{2}) = P''_{1}(y_{1},y_{2})+ \varepsilon_{3} \cdot \gamma_{2}(y_{1},y_{2}) \cdot (\kappa_{1}y_{1}+\kappa_{2}y_{2})$$
$$P'''_{2}(y_{1},y_{2}) = P''_{2}(y_{1},y_{2})+ \varepsilon_{3} \cdot \gamma_{2}(y_{1},y_{2}) \cdot (\kappa_{3}y_{1}+\kappa_{4}y_{2})$$ We are going to choose carefully the coefficients $\kappa_{i}$ in order to control the differential $D((L''')^{n_{p_{c}}})_{p_{c}}$. Because of the choice of the degree $d$ in 4.2.13 and 4.2.14, we have $\text{deg}(P'''_{1}) \le \text{deg}(P''_{1})$ and $\text{deg}(P'''_{2}) \le \text{deg}(P''_{2})$. Since the property of being a holomorphic mapping is open, $L'''$ is a holomorphic mapping on $\mathbb{P}^{2}(\mathbb{C})$ for sufficiently small values of $\varepsilon_{3}$. Then, item 1 is obvious. Then, because of the quadratic terms $S_{j}^{2}$ in $\gamma_{2}$, it is clear that $c$ stays preperiodic (with the same periodic orbit $p_{c}$) for $L'''$ and $D_{L(p_{c})}L''' = D_{L(p_{c}) }L$, $\cdots$, $D_{L^{n_{pc}-1}(p_{c})}L''' = D_{L^{n_{pc}-1}(p_{c}) }L$. This shows item 2.  In the chart $\{y_{3} \neq 0\}$, we have : $$D_{p_{c}}L''' = D_{p_{c}}L''+ \varepsilon_{3}  \cdot \gamma_{2}(p_{c}) \cdot  \begin{pmatrix} \kappa_{1} & \kappa_{2} \\  \kappa_{3} & \kappa_{4} \end{pmatrix}$$ with $ \gamma_{2}(p_{c}) \neq 0 $. We have $D_{L(p_{c})}L''' = D_{L(p_{c}) }L$, $\cdots$ , $D_{L^{n_{pc}-1}(p_{c})}L''' = D_{L^{n_{pc}-1}(p_{c}) }L$. We also have the equality : $D_{p_{c}}L \cdot ... \cdot  D_{L^{n_{pc}-1}(p_{c}) }L  = a^{n_{p_{c}}} \cdot I_{2} $ because the period $n_{p_{c}}$ is a multiple of the order of $A$ (see Proposition \ref{r41}). Then we have : 
 $$D_{p_{c}}(L''')^{n_{p_{c}}} =  a^{n_{p_{c}}} \cdot \Big( I_{2}+ \varepsilon_{3} \cdot  \gamma_{2}(p_{c}) \cdot \begin{pmatrix} \kappa_{1} & \kappa_{2} \\  \kappa_{3} & \kappa_{4} \end{pmatrix} \cdot (D_{p_{c}}L)^{-1} \Big)$$ Let us denote by $M$ the matrix whose two columns are $D\Pi_{\mathfrak{pc}}(w_{1})$ and $D\Pi_{\mathfrak{pc}}(w_{2})$. We choose :  $$\begin{pmatrix} \kappa_{1} & \kappa_{2} \\  \kappa_{3} & \kappa_{4} \end{pmatrix} = \frac{1}{ \gamma_{2}(p_{c})} \cdot  M \cdot  \begin{pmatrix} 1 & 0 \\  0 & 0 \end{pmatrix} \cdot M^{-1} \cdot (D_{p_{c}}L)$$ Then : 
  $$D_{p_{c}}(L''')^{n_{p_{c}}} =  a^{n_{p_{c}}} \cdot M \cdot \begin{pmatrix} 1+\epsilon_{3} & 0 \\  0 & 1 \end{pmatrix} \cdot M^{-1}$$
  This equality implies that items 3,4 and 5 are satisfied and this ends the proof of the proposition.  \qedhere

 \end{proof}  We are now able to prove Proposition \ref{r97}.
\begin{proof}[Proof of Proposition \ref{r97} ]
We consider the holomorphic family of holomorphic maps $(L'''_{\varepsilon_{1},\varepsilon_{2},\varepsilon_{3}})_{(\varepsilon_{1},\varepsilon_{2},\varepsilon_{3}) \in \mathbb{D}^{3}}$. According to Lemma \ref{r93}, every $L'''_{\varepsilon_{1},\varepsilon_{2},\varepsilon_{3}}$ is diagonalizable and admits $w_{1}$ and $w_{2}$ as eigenvectors. Then according to Lemma \ref{r99}, we can take some uniform open set set $\mathbb{B}_{lin} \subset \mathbb{P}^{2}(\mathbb{C})$, some ball $B_{lin}$, such that there exists $\varphi_{L'''}$ defined on $\mathbb{B}_{lin}$ with values in $B_{lin} \subset \mathbb{C}^{2} $ such that $L'''$ is linearizable by $\varphi_{L'''}: \mathbb{B}_{lin} \mapsto B_{lin}$. Moreover $\varphi_{L'''}$ varies continuously with $L'''$. We denote by $\mathfrak{B}_{lin} $ some ball in $\Pi^{-1}(\mathbb{B}_{lin}) \subset \mathbb{T}$.

\begin{lem}
Let $\Gamma'$ be the diameter of $\mathfrak{B}_{lin}$ of direction $w_{1}$. Then there exists $n_{0}$ such that $\bigcup_{1 \le n \le n_{0}} \mathcal{L}^{n}(\Gamma')$ contains a $(0,w_{1})$-quasi-diameter of $\mathfrak{B}$.
\end{lem}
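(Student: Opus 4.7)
The plan is to use the fact, recalled just before the lemma, that the linear part of $\mathcal{L}^{n_{pc}\cdot \text{ord}(A)}$ is the scalar $a^{n_{pc}\cdot \text{ord}(A)}\cdot I_{2}$ and that $\mathfrak{pc}$ is a fixed point of this iterate; hence in affine coordinates centered at $\mathfrak{pc}$, the map $\mathcal{L}^{n_{pc}\cdot \text{ord}(A)}$ is the complex homothety of ratio $\alpha:=a^{n_{pc}\cdot \text{ord}(A)}$ with $|\alpha|\ge 2^{n_{pc}\cdot \text{ord}(A)}>1$. In particular, along the subsequence $n_{k}=k\cdot n_{pc}\cdot \text{ord}(A)$ the direction $w_{1}$ is preserved, so $\mathcal{L}^{n_{k}}(\Gamma')$ remains a piece of complex line parallel to $\mathbb{C}w_{1}$, dilated by the factor $|\alpha|^{k}$, whose size is unbounded in $k$.

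Next, I would translate the density hypothesis on $w_{1}$ into uniform equidistribution. By construction $\mathfrak{pc}+\mathbb{C}w_{1}$ projects densely to $\mathbb{T}$; since density is translation-invariant, every affine complex line of direction $w_{1}$ projects densely to $\mathbb{T}$, i.e.\ the complex one-parameter flow $\phi_{z}(p)=p+z w_{1}$ on the compact abelian Lie group $\mathbb{T}$ has every orbit dense. By the Kronecker--Weyl theorem, this flow is then uniformly distributed with respect to Haar measure on $\mathbb{T}$. Denote by $c_{\mathfrak{B}}$ and $r_{\mathfrak{B}}$ the center and radius of $\mathfrak{B}$, and set $\varepsilon=r_{\mathfrak{B}}/20$; uniform distribution combined with compactness of $\mathbb{T}$ yields a constant $C(\varepsilon)>0$, independent of the base point $p$, such that every arc $\{p+z w_{1}:|z|\le C(\varepsilon)\}$ meets the ball of center $c_{\mathfrak{B}}$ and radius $\varepsilon$.

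To conclude, I would pick $k$ large enough that $|\alpha|^{k}\cdot\text{rad}(\mathfrak{B}_{lin})\ge C(\varepsilon)+4 r_{\mathfrak{B}}$, and set $n_{0}=n_{k}$. The piece of complex line $\mathcal{L}^{n_{k}}(\Gamma')$ has direction $w_{1}$ and extends over a disk of radius at least $C(\varepsilon)+4 r_{\mathfrak{B}}$ in $\mathbb{C} w_{1}$, so it meets $B(c_{\mathfrak{B}},\varepsilon)$ and extends at least $2r_{\mathfrak{B}}$ in each $\mathbb{C}w_{1}$-direction beyond that intersection. A subdisk of $\mathcal{L}^{n_{k}}(\Gamma')$ is therefore a graph over a disk of radius $\ge 2r_{\mathfrak{B}}$ in $\mathbb{C}w_{1}$ of constant transverse coordinate, crossing $\mathfrak{B}$ and meeting the ball of radius $r_{\mathfrak{B}}/20<r_{\mathfrak{B}}/10$ around $c_{\mathfrak{B}}$; by Definition~\ref{def33} this subdisk is a $(0,w_{1})$-quasi-diameter of $\mathfrak{B}$, and it sits inside $\mathcal{L}^{n_{0}}(\Gamma')\subset\bigcup_{1\le n\le n_{0}}\mathcal{L}^{n}(\Gamma')$. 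The main subtlety is the uniform density step: topological density of $\mathfrak{pc}+\mathbb{C}w_{1}$ would only yield unspecified close returns, while we need that a prescribed-length arc starting at an arbitrary base point comes within $\varepsilon$ of a prescribed target, a quantitative statement provided precisely by Kronecker--Weyl equidistribution under the density hypothesis already imposed on $w_{1}$.
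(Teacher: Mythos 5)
Your proof is correct and follows essentially the same route as the paper: the paper's own argument is just the terse observation that $\bigcup_{n}\mathcal{L}^{n}(\Gamma')$ is dense in $\mathbb{T}$ by the choice of $w_{1}$, from which a finite $n_{0}$ suffices. You merely make explicit what the paper leaves implicit, namely the restriction to iterates $n_{k}=k\,n_{pc}\cdot\mathrm{ord}(A)$ (so that the direction $w_{1}$ is preserved and the image is an ever longer disk on a dense line of direction $w_{1}$) and the uniform hitting radius $C(\varepsilon)$, which you obtain via Kronecker--Weyl equidistribution but which also follows from a soft compactness/minimality argument.
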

\begin{proof}
$\bigcup_{1 \le n \le + \infty} \mathcal{L}^{n}(\Gamma')$ is dense in $\mathbb{T}$ by the choice of $w_{1}$. Then there exists $n_{0}$ such that $\bigcup_{1 \le n \le n_{0}} \mathcal{L}^{n}(\Gamma')$ contains a $(0,w_{1})$-quasi-diameter of $\mathfrak{B}$.  \qedhere

\end{proof} From Lemma \ref{rlin}, we know that $\varphi_{L} \circ \Pi$ is linear. Rewriting this result in $\mathbb{P}^{2}(\mathbb{C})$ we have : 
\begin{co5}
Let $\Gamma''$ be the diameter of $B_{lin}$ of direction $(\varphi_{L} \circ \Pi)(w_{1})$. Then there exists $n_{0}$ such that $ \Pi^{-1}(\bigcup_{1 \le n \le n_{0}} L^{n}(\varphi_{L}^{-1}(\Gamma'')) \cap \mathbb{B})$ contains a $(0,w_{1})$-quasi-diameter of $\mathfrak{B}$.
\end{co5}
 By continuity of $L''' \mapsto \varphi_{L'''}$ (see Lemma \ref{r99}), we have the following perturbation result : 
\begin{co5} \label{r95}
There exists $\theta'>0$, some neighborhood $\mathcal{W}_{2}(L)$ of $L$ in $\mathrm{Hol}_{d'}$ and an integer $n_{0}$ such that for every $(\theta',(\varphi_{L} \circ \Pi)(w_{1}))$-quasi-diameter $\Gamma''$ of $B_{lin}$, for every $L''' \in \mathcal{W}_{2}(L)$, we have that $\Pi^{-1}(\bigcup_{1 \le n \le n_{0}} (L''')^{n} (  \varphi_{L'''}^{-1}(\Gamma'')) \cap \mathbb{B})$ contains a $(\theta,w_{1})$-quasi-diameter of $\mathfrak{B}$.
\end{co5} Remind that $w_{2}$ is not tangent to $\Pi^{-1}(\text{PCrit}(L))$ at $\mathfrak{pc}$. We can take a neighborhood $\mathcal{W}_{3}(L)$ of $L$ such that every map in $\mathcal{W}_{3}(L)$ for which $\mathfrak{pc}$ is in the postcritical set still satisfies this condition. 
We consider $\mathcal{W}(L) = \mathcal{W}_{1}(L) \cap \mathcal{W}_{2}(L) \cap \mathcal{W}_{3}(L)$. Since the conclusions 1,2,3 and 4 of Lemma \ref{r93} are satisfied, according to Lemma \ref{r98}, there exists a disk $\tilde{\Gamma}$ included in the postcritical set of $L'''$ such that $\Pi^{-1}(\tilde{\Gamma})$ contains a $(\theta',w_{1})$-quasi-diameter of $\mathfrak{B}_{lin}$ (remind $\theta'$ was defined in Lemma \ref{r95}). According to Lemma \ref{r95},  $\bigcup_{1 \le n \le n_{0}} (\mathcal{L}''')^{n}(\Pi^{-1}(\tilde{\Gamma}))$ contains a $(\theta,w_{1})$-quasi-diameter of $\mathfrak{B}$ so the conclusion follows. \end{proof}

\section{Proof of the main result}
 We consider the perturbations $L'''$ in $\mathcal{W}(L)$ as in the previous subsection and such that $L'''=L'''_{\varepsilon_{1},\varepsilon_{2},\varepsilon_{3}} \in \mathcal{X}_{\varepsilon_{1}}$ (the neighborhood $\mathcal{X}_{\varepsilon_{1}}$ of $L_{\varepsilon_{1}}$ was introduced in Proposition \ref{r71}, all maps in $\mathcal{X}_{\varepsilon_{1}}$ have a correcting IFS). Let us consider the union of all the sets $\mathcal{G}'''_{j}(\mathfrak{B}) \subset \mathfrak{B}$ for $1 \le j \le q$ (remind that $\mathcal{G}_{j}'''  $ was defined in Proposition \ref{r71}). Reducing $\mathcal{W}(L)$ if necessary, by continuity it contains a grid of balls $G^{1}= (u^{1},o^{1},n_{G},r^{1})$ with $r^{1} \ge \frac{\iota \rho}{2}$ (see 4.3.20).

 \begin{pr}
 There exists an integer $d$ (depending only from $\mathbb{T}$) such that for every Latt\`es map $L$ inducing an affine map on $\mathbb{T}$ of linear part $aA$, every map $L'''$ as given in Proposition \ref{r97} is such that :

 \begin{enumerate} \item $\bigcup_{ 1 \le j \le q} \mathcal{G}'''_{j} (\mathfrak{B})$ contains a grid of balls $G^{1}= (u^{1},o^{1},n_{G},r^{1})$ with $q = (2n_{G}+1)^{4}$ such that each $\mathcal{G}'''_{j}  (\mathfrak{B})$ contains a ball of $G^{1}$ \item the contraction factor of the IFS $(\mathcal{G}'''_{1},...,\mathcal{G}'''_{q})$ is $|a| \ge 2$ \item 
there exist $(n+1)$ balls $\mathfrak{B}_{0},\mathfrak{B}_{1},...,\mathfrak{B}_{n}  \subset \mathfrak{B}$ of relative size larger than $\nu$, such that the $\frac{3}{4}$-parts of $\mathfrak{B}_{0},\mathfrak{B}_{1},...,\mathfrak{B}_{n}$ are included in the hull of $G^{1}$, and satisfying the following property :  for $1 \le j \le q$ such that $\mathcal{G}'''_{j}(\mathfrak{B}) \subset \mathfrak{B}_{p}$, $\mathcal{G}'''_{j} = \frac{1}{a}(A+\tilde{h}_{j})$ is quasi-linear of type $(x,p)$ with $x<x(u^{1})$ . Moreover, $\bigcup_{ 1 \le j \le q}  \mathcal{G}'''_{j}  (\mathfrak{B}_{p})$ contains a grid of balls $\Gamma^{1}_{p}= (u^{1},o^{1}_{p},n_{G},s^{1})$ for each $0 \le p \le n$ with $s^{1} \ge \nu \cdot r^{1}$

\item $n_{G}>\frac{10}{\nu} \cdot N(\frac{\nu \cdot r^{1}}{10})$ \item $|a| \cdot R \cdot \max_{1 \le j \le q} (|| \mathcal{G}'''_{j} ||_{C^{2}}   ) < \frac{\nu \cdot r^{1}}{100}$ \item there exists a $(\theta,w)$-quasi-diameter of $\mathfrak{B}$ inside  $\Pi^{-1}(    \mathrm{PCrit}(L''')   )$ \end{enumerate}

\end{pr}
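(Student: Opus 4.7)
The statement collects the five hypotheses of the correcting-IFS definition (Proposition \ref{r99999}) together with the quasi-diameter condition, for the family $L'''$ produced in subsection 4.5. My plan is to verify each item in turn, invoking the machinery of subsections 4.2--4.5 rather than building anything new.

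For items 1 and 4, the grid $G^{1}\subset \bigcup_{j}\mathcal{G}'''_{j}(\mathfrak{B})$ was already exhibited just before the statement of the proposition by combining the affine preimage count of 4.3.20 with continuity of local inverses (after shrinking $\mathcal{W}(L)$ if needed). The choice $(d^{1})^{2}\cdot \mathrm{vol}_{\iota\rho}/(10\,\mathrm{vol}(\mathbb{T})) > (\tfrac{1}{\nu}N(\tfrac{\nu\rho}{10}))^{4}$ made in 4.2.12, together with $r^{1}\ge \iota\rho/2$, delivers the count $n_{G}>\tfrac{10}{\nu}N(\tfrac{\nu r^{1}}{10})$. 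Item 2 follows immediately from the other condition imposed on $d^{1}$ in 4.2.12, namely $(d^{1})^{2}\ge 100\cdot \max_{A\in G_{\text{Latt\`es}}}|\det A|^{2}$: combined with the topological-degree identity $(d')^{2}=|a|^{4}|\det A|^{2}$, this forces $|a|^{4}\ge 100$ and hence $|a|\ge 2$. Item 6 is exactly the content of Proposition \ref{r97}.

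Item 3 is a repackaging of Proposition \ref{r71}. The balls $\mathfrak{B}_{0},\ldots,\mathfrak{B}_{n}$ were picked in 4.3.21 with relative size $\nu$; their $\tfrac{3}{4}$-parts can be arranged to lie inside the hull of $G^{1}$ by mildly shrinking them inside $\mathfrak{B}\cap \Pi^{-1}(\mathbb{B}_{p})$. That each $\mathcal{G}'''_{j}$ mapping $\mathfrak{B}$ into $\mathfrak{B}_{p}$ is quasi-linear of type $(t\varepsilon_{1},p)$ with $t\varepsilon_{1}<x(u^{1})$ is precisely Proposition \ref{r71}, up to choosing $\varepsilon_{1}$ small. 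The decomposition $\mathcal{G}'''_{j}=\tfrac{1}{a}(A+h_{j})+\tfrac{1}{a}\varepsilon_{j}$ with $\|\varepsilon_{j}\|_{C^{2}}<\tfrac{1}{1000}\|h_{j}\|$ is then built in by the definition of quasi-linear, using the Cauchy estimates of Lemma \ref{r75} and the bound $2|H|_{\infty}<\xi$ to control the nonlinear part. The grids $\Gamma^{1}_{p}\subset \bigcup_{j}\mathcal{G}'''_{j}(\mathfrak{B}_{p})$ are then obtained by applying the same affine-preimage counting argument as for $G^{1}$ to the smaller ball $\mathfrak{B}_{p}$: its radius is $\ge \nu\,\mathrm{rad}(\mathfrak{B})$, so the degree lower bound $d^{1}$ still guarantees enough preimages to form a grid of the prescribed relative size $s^{1}\ge \nu r^{1}$.

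For item 5, observe that the unperturbed inverse branches satisfy $\mathcal{G}_{j}=\Pi\circ \mathcal{L}^{-1}\circ \Pi^{-1}$, so $\|\mathcal{G}_{j}\|_{C^{2}}$ is bounded by $C(\Pi)/|a|^{2}$ with $C(\Pi)$ depending only on the torus and the projection. Hence $|a|\cdot R\cdot \|\mathcal{G}_{j}\|_{C^{2}}\le C(\Pi)R/|a|$, which is smaller than $\nu r^{1}/100$ whenever $|a|$ is large enough; since the right-hand side depends only on $\mathbb{T}$, this is an additional lower bound on $d'$ that can be folded into the definition of $d$ in 4.2.14. The perturbative contribution coming from $L'''$ is controlled in the same way as in the proof of Proposition \ref{r71}, at the cost of taking $\varepsilon_{1},\varepsilon_{2},\varepsilon_{3}$ sufficiently small. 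This last step is the only one that is genuinely quantitative rather than declarative, and I expect it to be the main, albeit routine, technical point.
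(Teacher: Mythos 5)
Your items 1, 2, 4 and 6 are handled exactly as in the paper (the grid $G^{1}$ and $r^{1}\ge\iota\rho/2$ from 4.3.20 and a first shrinking of $\mathcal{W}(L)$, $|a|\ge 2$ from 4.2.12, $n_{G}$ from the choice of $d^{1}$, item 6 from Proposition \ref{r97}), and your item 3 uses the same ingredients (Proposition \ref{r71} for the type $(t\varepsilon_{1},p)$, the balls of 4.3.21, images of $\mathfrak{B}_{p}$ under the same branches for $\Gamma^{1}_{p}$); note only that after perturbation one loses a factor in the relative size (the paper itself settles for $s^{1}\ge\nu r^{1}/2$ there), so asserting $s^{1}\ge\nu r^{1}$ on the nose is optimistic.

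The genuine problem is item 5. You identify $\mathcal{G}_{j}$ with $\Pi\circ\mathcal{L}^{-1}\circ\Pi^{-1}$, i.e.\ with the branches $g_{j}$ acting in the chart of $\mathbb{P}^{2}(\mathbb{C})$, and claim $\|\mathcal{G}_{j}\|_{C^{2}}\le C(\Pi)/|a|^{2}$. Two things go wrong. First, for those chart-side branches the estimate is false in general: writing $D^{2}(\Pi\circ\mathcal{L}^{-1}\circ\Pi^{-1})$ by the chain rule, the term $D\Pi\cdot D\mathcal{L}^{-1}\cdot D^{2}(\Pi^{-1})$ is only $O(1/|a|)$, so $|a|\cdot R\cdot\|g_{j}\|_{C^{2}}$ stays of size $C(\Pi)\,R$ and cannot be made smaller than $\nu r^{1}/100$ by increasing the degree; the proposed fix of folding a new lower bound into $d$ in 4.2.14 therefore does not close the estimate (and would in any case be delicate, since the constants of 4.2--4.3 were fixed with $d$). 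Second, and decisively, the maps $\mathcal{G}'''_{j}$ in the statement are the maps on $\mathfrak{B}\subset\mathbb{T}$, i.e.\ $\Pi^{-1}\circ g_{j}\circ\Pi$, which for the unperturbed $L$ are branches of the affine map $\mathcal{L}^{-1}$ in the euclidean coordinates of the torus; hence $\|\mathcal{G}_{j}\|_{C^{2}}=\max_{\mathfrak{B}}\|D^{2}\mathcal{G}_{j}\|=0$, item 5 is strict for $L$ itself, and it persists for $L'''$ simply by one more reduction of $\mathcal{W}(L)$ (equivalently, taking $\varepsilon_{1},\varepsilon_{2},\varepsilon_{3}$ small, which is the correct part of your last sentence). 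So the conclusion is true, but your stated mechanism for it would fail as written; once $\mathcal{G}'''_{j}$ is read correctly the large-$|a|$ detour is unnecessary and the paper's one-line continuity argument suffices.
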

\begin{proof}
The first item was stated before the proposition. The second one comes from 4.2.12 and the fourth one from 4.3.20. The fifth one can be obtained from a reduction of $\mathcal{W}(L)$ if necessary. The last one is a consequence of Proposition \ref{r97}. We show the third item. The existence of the balls $\mathfrak{B}_{p}$ of relative size $\nu$ is a consequence of Proposition \ref{r71}. The inclusions $\mathfrak{B}_{0},\mathfrak{B}_{1},...,\mathfrak{B}_{n}  \subset \mathfrak{B}$ and the inequality on $n_{G}$ ensure that there are sufficiently many $\mathcal{G}'''_{j} (\mathfrak{B})$ so that the $\frac{3}{4}$-parts of $\mathfrak{B}_{0},\mathfrak{B}_{1},...,\mathfrak{B}_{n}$ are included in the hull of $G^{1}$. Let us now consider the $(n+1)$ sets $ \bigcup_{j \le q} \mathcal{G}'''_{j}(\mathfrak{B}_{p}) \subset \mathfrak{B}$ for $0 \le p \le n$. Reducing $\mathcal{W}(L)$ a last time if necessary, by continuity each of them contains a grid of balls $\Gamma^{1}_{p}= (u^{1},o^{1}_{p},n_{G},s^{1})$ with $s^{1} \ge \nu \cdot r^{1}$. The property stated in item 3 is also a consequence of Proposition \ref{r71}.  
\end{proof}
The intersection $\bigcap_{j \ge 1} \mathcal{G}_{j}(\Pi(\mathfrak{B}))$ is in the Julia set of $L$. Since $\bigcap_{j \ge 1} \mathcal{G}_{j}(\Pi(\mathfrak{B}))$ is a basic repeller, it is a consequence of Lemma 2.3 of \cite{dm} that $\bigcap_{j \ge 1} \mathcal{G}'''_{j}(\Pi(\mathfrak{B}))$ is in the Julia set of $L'''$ for sufficiently small perturbations $L'''$ of $L$.  According to Proposition \ref{r99999} (beware that the maps $\mathcal{G}'''_{j}$ in our case correspond to the maps $\mathcal{G}_{j}$ of the proposition), we can conclude this gives us persistent intersections between the Julia set and the postcritical set. This is true for every $L''' $ defined as before and we know that $L$ is accumulated by such maps inside $\text{Hol}_{d'}$. By \cite{du9} (see Proposition 2.5 of \cite{dm} for a result in our case), we know that persistent intersections between the postcritical set and a hyperbolic repeller inside the Julia set imply the presence of open sets inside the bifurcation locus. Since they are only finitely many $A \in G_{\text{Latt\`es}}$ for a given torus $\mathbb{T}$, $d$ is well defined. This proves the final result. \newline \newline
  \bibliographystyle{plain} \bibliography{bibi14}  
\begin{thebibliography}{10}

\bibitem{be}
Pierre Berger.
\newblock Generic family with robustly infinitely many sinks.
\newblock {\em Invent. Math.}, 205(1):121--172, 2016.

\bibitem{du3}
François Berteloot and Fabrizio Bianchi.
\newblock Perturbations d'exemples de $\text{L}$att\`es et dimension de
  $\text{H}$ausdorff du lieu de bifurcation.
\newblock {\em to appear on the Journal de Math\'ematiques Pures et
  Appliqu\'ees}, 2016.

\bibitem{du9}
François Berteloot, Fabrizio Bianchi, and Christophe Dupont.
\newblock Dynamical stability and $\text{L}$yapunov exponents for holomorphic
  endomorphisms of {$\Bbb{P}^2$}.
\newblock {\em Ann. Sci. Ecole Norm. Sup.}, pages 215--262, 2018.

\bibitem{duber}
François Berteloot and Christophe Dupont.
\newblock Une caract$\text{\'e}$risation des exemples de $\text{L}$att\`es par
  leur mesure de $\text{G}$reen.
\newblock {\em Comment. Math. Helv.}, 80(3):433--454, 2005.

\bibitem{bieblerberthollove}
Sébastien Biebler.
\newblock Newhouse phenomenon for automorphisms of low degree in
  $\text{C}^{3}$.
\newblock {\em Advances in Mathematics}, 361:106952, 2020.

\bibitem{bd1}
Christian Bonatti and Lorenzo~J. D{\'{\i}}az.
\newblock Persistent nonhyperbolic transitive diffeomorphisms.
\newblock {\em Ann. of Math. (2)}, 143(2):357--396, 1996.

\bibitem{bb1}
Gregery~T. Buzzard.
\newblock Infinitely many periodic attractors for holomorphic maps of {$2$}
  variables.
\newblock {\em Ann. of Math. (2)}, 145(2):389--417, 1997.

\bibitem{dj}
Marius Dabija and Matthias Jonsson.
\newblock Algebraic webs invariant under endomorphisms.
\newblock {\em Publ. Math.}, pages 137--148, 2010.

\bibitem{dm}
Romain Dujardin.
\newblock Non density of stability for holomorphic mappings on {$\Bbb{P}^k$}.
\newblock {\em Journal de l'Ecole Polytechnique}, 4:813--843, 2017.

\bibitem{dl}
Romain Dujardin and Mikhail Lyubich.
\newblock Stability and bifurcations for dissipative polynomial automorphisms
  of {$\Bbb{C}^2$}.
\newblock {\em Invent. Math.}, 200(2):439--511, 2015.

\bibitem{sj}
Rong Feng.
\newblock Lattes maps on {${\bf P}^2$}.
\newblock {\em J. Math.Pures Appl.}, pages 636--650, 2010.

\bibitem{bker}
I.N.Baker.
\newblock Fixpoints of polynomials and rational functions.
\newblock {\em J. London Math. Soc.}, pages 615--622, 1964.

\bibitem{jp}
Jyoichi Kaneko, Syoshi Tokugana, and Masaaki Yoshida.
\newblock Complex crystallographic groups $\text{II}$.
\newblock {\em J. Math. Soc. Japan}, 34:595--605, 1982.

\bibitem{lyy}
Mikhail Lyubich.
\newblock An analysis of stability of the dynamics of rational functions.
\newblock {\em Teoriya Funk., Funk. Anal. Prilozh.}, 42:72--81, 1984.

\bibitem{mss}
Ricardo Ma$\tilde{\text{n}}$e, Paulo Sad, and Dennis Sullivan.
\newblock On the dynamics of rational maps.
\newblock {\em Ann. Sci. Ecole Norm. Sup.}, 16:193--217, 1983.

\bibitem{lpp555}
John Milnor.
\newblock On $\text{L}$att\`es maps, in : Dynamics on the $\text{R}$iemann
  sphere.
\newblock {\em European Math. Soc.}, pages 9--43, 2006.

\bibitem{jp7}
S.~Morosawa, Y.~Nishimura, M.~Taniguchi, and T.~Ueda.
\newblock {\em Holomorphic dynamics}, volume~66.
\newblock Cambridge $\text{S}$tudies in advanced mathematics, 2000.

\bibitem{taflin}
Johan Taflin.
\newblock Blenders near polynomial product maps of {$\Bbb{C}^2$}.
\newblock {\em arxiv:1702.02115v2}, 2017.

\end{thebibliography}
\end{document}